\numberwithin{equation}{subsection}
\newtheorem{thm}{Theorem}[section]
\newtheorem*{thm*}{Theorem}
\newtheorem{lem}[thm]{Lemma}
\newtheorem{prop}[thm]{Proposition}
\newtheorem{cor}[thm]{Corollary}
\newtheorem{defn}[thm]{Definition}
\newtheorem{que}[thm]{Question}
\newtheorem{fct}[thm]{Fact}
\newtheorem{claim}[thm]{Claim}
\newtheorem{mainthm}{Theorem}
\theoremstyle{remark}
\newtheorem{rem}[thm]{Remark}
\newcommand{\diag}{\textsubscript{\raisebox{0.4ex}{\scalebox{0.6}{\(\blacktriangle\)}}}}\xspace
\title{\scalebox{0.9}{Locally integrable cross sections and their intersection covolume}}
\author{Nachi Avraham-Re'em}
\address{Department of Mathematics, Chalmers and University of Gothenburg, Gothenburg, Sweden}
\curraddr{Department of Mathematics, Technion---Israel Institute of Technology, Haifa, Israel}
\email{nachi.avraham@gmail.com}
\author{Michael Bj\"{o}rklund}
\address{Department of Mathematics, Chalmers and University of Gothenburg, Gothenburg, Sweden}
\email{micbjo@chalmers.se}
\author{Rickard Cullman}
\address{Department of Mathematics, Chalmers and University of Gothenburg, Gothenburg, Sweden}
\email{cullman@chalmers.se}
\thanks{The research was supported by the Knut and Alice Wallenberg Foundation (KAW 2021.0258).}
\subjclass[2020]{37A15 (primary); 22F10, 22D40, 28D15, 60G55, 28C10}
\keywords{probability preserving action, cross section, periodic point process, intersection covolume}
\begin{document}

\begin{abstract}
We study systematically cross sections of probability preserving actions of unimodular groups and their associated transverse measures, and introduce the invariant \emph{intersection covolume} to quantify their periodicity. Our main theorem, derived from a higher order version of Kac's lemma, shows that the intersection covolume is bounded below by the intensity, with equality precisely when the action is induced by a lattice (in the sense of Mackey). We further prove that the natural cross sections of cut--and--project actions have finite intersection covolume.
\end{abstract}

\maketitle

\setcounter{tocdepth}{1}
\tableofcontents

\section{Introduction}

This paper, together with its companion paper \cite{AvBjCuII}, develops a systematic theory of intersection spaces associated with cross sections of probability preserving actions of locally compact second countable (lcsc) unimodular groups. Intersection spaces were originally introduced by the second author together with Hartnick and Karasik in \cite{bjorklund2025int}, where their construction relied on the additional assumption that the cross sections are separated. The present work removes this restriction and treats general locally integrable cross sections, thereby significantly enlarging the range of applications. In particular, we can treat random subsets of general, not necessarily uniform, approximate lattices (in particular those arising from cut--and--project schemes), extending the scope of the theory well beyond the framework accessible in \cite{bjorklund2025int}. In another direction, the framework developed in this work has been used by Hartnick and Sarti \cite{hartnick2025bounded} to establish bounded cohomological induction for transverse measured groupoids.

\smallskip

By considering general locally integrable cross sections, we unify the transverse measure construction for separated cross sections in ergodic theory with the Palm measure construction in point process theory. Accordingly, a key ingredient in our framework is a characterization of relative invariance in terms of the classical Mecke equation. This was known for lcsc abelian groups \cite{heveling2005,heveling2007}, and by proving it for general lcsc groups we confirm a conjecture of G.~Last~\cite[\S3.10]{last2009}, which is of independent interest already in point processes theory.

\smallskip

Intersection spaces were tailored to the study of random subsets of approximate lattices in the sense of \cite{BjHa2018}. Although the construction extends to arbitrary locally integrable cross sections, the canonically associated intersection space measure is typically infinite. Finiteness of the intersection space measure arises only in structurally constrained situations, most prominently in the presence of approximate lattice phenomena or after a suitable thinning. In these cases, its total mass, which we call the \emph{intersection covolume}, serves as a quantitative measure of periodicity of the system. This interpretation is made precise in Theorem~\ref{thm:mthm} below, whose proof is based on a new formula for the intersection covolume as a higher order version of Kac's lemma (see Section~\ref{sct:intvol}).

\subsection{Cross sections, intensity and transverse measures}

Cross sections provide a way to reduce the continuous dynamics of an lcsc unimodular group to a countable equivalence relation, while retaining key quantitative invariants. They appear naturally in point processes, ergodic theory, aperiodic order, and other related fields. Formally, let \(G\) be an lcsc unimodular group acting in a measure preserving way on a standard probability space \(\left(X,\mu\right)\). A {\bf cross section} is a Borel set \(Y\subset X\) meeting every \(G\)-orbit, such that each {\bf return times set}
\[Y_{x}=\{g\in G : g.x\in Y\}, \quad x\in X,\]
is locally finite. The {\bf return times set} of \(Y\) itself is the symmetric set
\[\Lambda_{Y}\coloneqq\{g\in G:g.Y\cap Y\neq\emptyset\}.\]

Every cross section carries a natural {\bf transverse measure} \(\mu_{Y}\), analogous to the Palm measure of point process theory, which will be introduced in detail in Section~\ref{sct:transverse}. Its total mass defines the {\bf intensity}
\[\iota_{\mu}\left(Y\right)\in\left(0,+\infty\right],\]
and \(Y\) is {\bf locally integrable} if \(\iota_{\mu}\left(Y\right)<+\infty\), in which case we speak of a {\bf transverse \(G\)-space} \(\left(X,\mu,Y\right)\).

\smallskip

A classical example comes from stationary point processes: the space \(G^{\ast}\) of locally finite configurations in \(G\), with \(G\) acting by right translations, has the natural cross section
\[G_{o}^{\ast}\coloneqq\{\omega\in G^{\ast}:e_{G}\in\omega\}.\]
The Palm measure construction (see~\cite{last2009} and the references therein) yields a correspondence between invariant measures on \(G^{\ast}\) and Palm measures on \(G_{o}^{\ast}\), and {\em finite intensity} of a point process means finiteness of the corresponding Palm measure.

\smallskip

The construction of transverse measures for flows (actions of \(\mathbb{R}\)) dates back to Ambrose--Kakutani (see~\cite[\S12]{nadkarni2013}). For unimodular groups, separated cross sections (those with \(\Lambda_{Y}\) discrete) have been studied in~\cite{avni2010entropy, kyed2015, Slutsky2017, bjorklund2025int} among others. In order to extend the intersection spaces construction due to \cite{bjorklund2025int} to general cross sections, we need to extend the transverse measure construction accordingly. To this end we prove that the Mecke equation characterizes relative invariance of transverse measures, thereby confirming Last's conjecture~\cite[\S3.10]{last2009}, and this leads to a canonical {\em transverse correspondence}:
\[\bigl\{\text{\(G\)-invariant \(\sigma\)-finite measures on } X\bigr\}\ \longleftrightarrow\ \bigl\{\text{relatively \(G\)-invariant \(\sigma\)-finite measures on }Y\bigr\}.\]

\subsection{Periodicity and intersection covolume}

We now consider the following (minimal) notion of periodicity for cross sections: a cross section \(Y\) is called {\bf completely periodic} if each return times set \(Y_{x}\) is a coset of a fixed lattice \(\Gamma<G\). In this case, the action factors through the homogeneous space \(\left(\Gamma\backslash G,m_{\Gamma\backslash G}\right)\), and \(Y\) collapses to a single point in the factor. More generally, for a transverse \(G\)-space \(\left(X,\mu,Y\right)\), we seek to quantify the deviation of \(Y\) from complete periodicity. To this end, we use the \emph{intersection space} construction presented in~\cite{bjorklund2025int}, adapted to general cross sections using the aforementioned transverse correspondence.

\smallskip

Let then \(\left(X,\mu\right)\) be probability preserving \(G\)-space with a locally integrable cross section \(Y\), and let \(\mu_{Y}\) be its transverse measure. The associated {\bf intersection space} (of order \(2\)) is the measure preserving \(G\)-space \(\left(Y^{\left[2\right]},\mu^{\left[2\right]}\right)\) defined as follows. The underlying Borel \(G\)-space is
\[Y^{\left[2\right]}:=\{\left(g.y_{1},g.y_{2}\right):g\in G,y_{1},y_{2}\in Y\},\]
with the diagonal action of \(G\). One notes that \(Y\times Y\) serves as a cross section of \(Y^{\left[2\right]}\), and therefore, using the transverse correspondence, the \(\sigma\)-finite measure \(\mu^{\left[2\right]}\) is the unique measure whose transverse measure with respect to \(Y\times Y\) is the product transverse measure \(\mu_{Y}\otimes\mu_{Y}\). We then define the {\bf intersection covolume} (of order \(2\)) of \(\left(X,\mu,Y\right)\) to be the quantity
\[I_{\mu}\left(Y\right):=\mu^{\left[2\right]}\big(Y^{\left[2\right]}\big)\in\left(0,+\infty\right].\]
This will be presented in details in Section~\ref{sct:intvol}, where we define intersection covolume \(I_{\mu}^{r}\left(Y\right)\) for all \(r\in\mathbb{N}\).

\smallskip

Recall the well-known notion of a \(G\){\bf-factor} or \(G\){\bf-factor map} between probability preserving \(G\)-spaces. For transverse \(G\)-spaces, this notion must be refined to account for the additional structure. We thus call a map \(\phi:\left(X,\mu,Y\right)\to\left(W,\nu,Z\right)\) a {\bf transverse} \(G\){\bf-factor} if \(\phi:\left(X,\mu\right)\to\left(W,\nu\right)\) is a \(G\)-factor map of probability preserving \(G\)-spaces, such that
\[Y_{x}=Z_{\phi\left(x\right)}\text{ for }\mu\text{-a.e. }x\in X.\]
In Section~\ref{sct:transfact}, we characterize transverse \(G\)-factors in terms of transverse measures. Importantly, both intensity and intersection covolume are preserved under transverse \(G\)-factors (Propositions~\ref{prop:factor} and~\ref{prop:interfactor}).

\begin{mainthm}\label{thm:mthm}
For every probability preserving \(G\)-space \(\left(X,\mu\right)\) with a locally integrable cross section \(Y\),
\[I_{\mu}\left(Y\right)\geq\iota_{\mu}\left(Y\right),\]
with equality if and only if there is a transverse \(G\)-factor from \(\left(X,\mu,Y\right)\) onto a homogeneous space \(\left(\Gamma\backslash G,m_{\Gamma\backslash G},\{\Gamma\}\right)\) for a lattice \(\Gamma<G\).
\end{mainthm}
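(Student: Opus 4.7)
The plan is to derive both the inequality and the equality criterion from a second-moment identity---the ``higher order version of Kac's lemma'' announced in the abstract---which relates the intersection measures on the intersection space to the transverse measure $\mu_Y$. Recall that, via the transverse correspondence, $\mu_Y$ is the relatively $G$-invariant $\sigma$-finite measure on $Y$ with total mass $\iota_\mu(Y)$, while the intersection measures of Section~\ref{sct:intvol} encode the joint law of returns to $Y$ and to $g.Y$, with total mass $I_\mu(Y)$. The identity I would write down expresses $I_\mu(Y)$ as an integral over the intersection space whose integrand disintegrates over $\mu_Y$ with fiberwise mass at least one, where the ``one'' corresponds to the diagonal contribution of $e_G \in Y_y$, already accounting for $\iota_\mu(Y)$. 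This immediately yields $I_\mu(Y) \geq \iota_\mu(Y)$, and characterizes equality as the $\mu_Y$-almost sure saturation of the fiberwise inequality.

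For the ``if'' direction, I would verify the equalities $I_\mu(Y) = \iota_\mu(Y) = \mathrm{covol}(\Gamma)$ directly on the homogeneous model $(\Gamma\backslash G, m_{\Gamma\backslash G}, \{\Gamma\})$: the transverse measure is a point mass of mass $\mathrm{covol}(\Gamma)$, and each intersection measure is supported on $\{\Gamma\}$ and totals the same value. Since intensity and intersection covolume are preserved under transverse $G$-factors (Propositions~\ref{prop:factor} and~\ref{prop:interfactor}), equality lifts to any $(X,\mu,Y)$ admitting such a factor.

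For the ``only if'' direction, saturation of the fiberwise inequality should mean that for $\mu_Y$-almost every $y$, the return-times set $Y_y$ behaves, as a point set in $G$, as if it were a single lattice coset: every additional return $g \in Y_y$ must satisfy $Y_{g.y} = Y_y\, g^{-1}$, so the assignment $y \mapsto Y_y$ is rigid along the countable orbit equivalence relation induced by $Y$. Combining this with the Mecke equation characterization of invariance of $\mu_Y$ (Last's conjecture, confirmed earlier in the paper), I would deduce that there is a single closed subgroup $\Gamma \leq G$ for which $Y_y$ is a right coset of $\Gamma$ for $\mu_Y$-almost every $y$. Local finiteness of $Y_y$ forces $\Gamma$ to be discrete, and finiteness of $\iota_\mu(Y)$ combined with Kac forces $\Gamma$ to have finite covolume, so $\Gamma$ is a lattice. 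Setting $\phi(x)$ to be the unique coset whose lift is $Y_x$ defines the required transverse factor $\phi : (X,\mu,Y) \to (\Gamma\backslash G, m_{\Gamma\backslash G}, \{\Gamma\})$; the relation $Y_x = Z_{\phi(x)}$ with $Z = \{\Gamma\}$ is then built in by construction.

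The main obstacle is precisely this last upgrade: passing from ``no extra multiplicity in intersections'' to ``$Y_y$ is a coset of a fixed subgroup independent of $y$.'' A priori, equality merely cancels out extra random intersections fiber-by-fiber, and there is no immediate reason why such pointwise rigidity should globally organize into a group structure. The Mecke equation for $\mu_Y$, together with a careful disintegration of the intersection measures across the countable equivalence relation on $Y$, is what I expect to supply exactly this upgrade---converting pointwise cancellation into the global group-theoretic rigidity needed to extract the lattice $\Gamma$ and the factor map.
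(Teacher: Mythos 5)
Your overall architecture matches the paper's: a Kac-type identity gives the inequality, extremal analysis of that identity produces the lattice, and Propositions~\ref{prop:factor} and~\ref{prop:interfactor} handle the converse on the homogeneous model. But the two steps that carry the actual mathematical content are missing. First, the ``second-moment identity'' is never written down, and the device that makes it work in the paper --- an \emph{equivariant Voronoi tessellation scheme} $\mathscr{T}(P)=\{\Theta(P,p):p\in P\}$ for locally finite subsets of $G$ (Proposition~\ref{prop:voronoi}) --- does not appear in your sketch. The paper's Kac formula reads
\[
I_{\mu}(Y)=\int_{Y^{\otimes 2}}m_{G}\bigl(\Theta\bigl(Y_{y_{1}}\cap Y_{y_{2}},e_{G}\bigr)\bigr)\,d\mu_{Y}^{\otimes 2},\qquad 1=\mu(X)=\int_{Y}m_{G}\bigl(\Theta\bigl(Y_{y},e_{G}\bigr)\bigr)\,d\mu_{Y}(y),
\]
and the inequality follows because $Y_{y_{1}}\cap Y_{y_{2}}\subseteq Y_{y_{1}}$ forces the Voronoi cell at $e_{G}$ to grow. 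Your substitute --- ``fiberwise mass at least one from the diagonal contribution of $e_{G}$'' --- is not a proof: the mere fact that $e_{G}\in Y_{y_{1}}\cap Y_{y_{2}}$ gives no lower bound on $I_{\mu}(Y)$; applying Campbell with a fixed weight $w$ only produces $\iota_{\mu}(Y)^{2}$ on the other side, which does not compare to $I_{\mu}(Y)$ without a configuration-dependent choice of $w$, and that choice is exactly the Voronoi cell.

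Second, and more seriously, the rigidity step in your ``only if'' direction rests on a vacuous condition: the identity $Y_{g.y}=Y_{y}g^{-1}$ holds for \emph{every} $g$ and $y$ (it is recorded in the preliminaries), so it cannot express any consequence of equality. What equality actually yields, via the measure-rigidity clause of Proposition~\ref{prop:voronoi}(2) (cells of equal Haar measure force equal point sets), is that $Y_{y_{1}}\cap Y_{y_{2}}=Y_{y_{1}}$, hence by symmetry $Y_{y_{1}}=Y_{y_{2}}$, for $\mu_{Y}^{\otimes 2}$-a.e.\ pair; Fubini then produces a single fixed set $\Gamma$ with $Y_{y}=\Gamma$ for $\mu_{Y}$-a.e.\ $y$, and only then does the trivial identity $Y_{g.y}=Y_{y}g^{-1}$, combined with Proposition~\ref{prop:null}(2), turn $\Gamma$ into a discrete subgroup and $x\mapsto\Gamma\lambda(x)$ into the factor map. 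You correctly flag this upgrade as ``the main obstacle,'' but the tools you propose to close it (the Mecke equation plus a disintegration of the intersection measures) are not the ones that do so, and no argument is supplied. As it stands the proposal is a plan whose two load-bearing steps are left open.
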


The construction of \emph{induced actions} from lattices was introduced by Mackey, and we refer to~\cite[\S II]{zimmer1978induced} for background. Then, from Theorem~\ref{thm:mthm} and a theorem of Zimmer~\cite[Theorem~2.5]{zimmer1978induced}, we obtain:

\begin{cor}
An ergodic probability preserving \(G\)-space \(\left(X,\mu\right)\) is an \emph{induced action} from a lattice in \(G\) if and only if it admits a cross section \(Y\) satisfying \(I_{\mu}\left(Y\right)=\iota_{\mu}\left(Y\right)\).
\end{cor}

The intersection covolume \(I_{\mu}\left(Y\right)\) can be infinite even when \(\iota_{\mu}\left(Y\right)<+\infty\). For instance, the Poisson point process on \(\mathbb{R}\) has finite intensity, while its intersection covolume can be shown to be infinite. It is therefore of particular interest to study situations where the inequality in Theorem~\ref{thm:mthm} is strict, yet the intersection covolume remains finite, thus indicating a degree of periodicity without being completely periodic. With a simple use of suspension flows, we will show in Section~\ref{sct:nogap} that for every \(\epsilon>0\), there is an ergodic transverse \(\mathbb{R}\)-space \(\left(X,\mu,Y\right)\) such that \(\iota_{\mu}\left(Y\right)<I_{\mu}\left(Y\right)<\left(1+\epsilon\right)\cdot\iota_{\mu}\left(Y\right)\).

\subsection{Cut--and--project actions and their cross sections}

Cut--and--project schemes arise in the study of quasicrystals and aperiodic order, where \emph{cut--and--project sets} provide ordered but non-periodic point patterns in groups. For uniform cut--and--project schemes (when the underlying lattice is uniform), it was proved in~\cite[Theorem~1.4]{bjorklund2025int} that the intersection measure is finite. We extend this to nonuniform lattices, and thus their intersection spaces constitute new examples of cross sections with finite intersection covolume but which are far from completely periodic.

A {\bf cut--and--project scheme} is a triple \(\left(G,H;\Gamma\right)\), where \(G\) and \(H\) are unimodular lcsc groups, and
\(\Gamma<G\times H\) is a lattice that projects injectively to \(G\) and densely to \(H\). Consider the homogeneous space
\[(\Xi,\xi)\coloneqq\bigl(\Gamma\backslash\left(G\times H\right),m_{\Gamma\backslash\left(G\times H\right)}\bigr),\]
as a Borel \(G\)-space (\(G\) acts on the first coordinate). A measurable set \(W\subset H\) is called a {\bf window}, if it is relatively compact, has nonempty interior, and it is aperiodic in the sense that no nontrivial element of \(H\) stabilizes \(W\). For a window \(W\subset H\), define a cross section of \(\Xi\) by
\[Y_{W}\coloneqq\{\Gamma\left(e_{G},w\right):w\in W\}.\]
The return times set of \(Y_{W}\) is a \emph{cut--and--project set} in \(G\):
\[\Lambda_{Y_{W}}=\operatorname{proj}_{G}\left(\Gamma\cap\left(G\times WW^{-1}\right)\right).\]

The following theorem generalizes
\cite[Theorem~1.4]{bjorklund2025int} to general cut--and--project schemes.

\begin{mainthm}\label{mthm:finintcov}
For every cut--and--project scheme \((G,H;\Gamma)\) and every window \(W\subset H\),
\[\iota_{\xi}\left(Y_{W}\right)<I_{\xi}\left(Y_{W}\right)<+\infty.\]
\end{mainthm}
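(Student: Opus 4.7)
The two inequalities have different flavors and require different techniques.

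\emph{Strict inequality $\iota_\xi(Y_W)<I_\xi(Y_W)$.} This will follow directly from Theorem~\ref{thm:mthm}. Suppose towards contradiction that equality holds. Then there exists a lattice $\Gamma'<G$ and a transverse $G$-factor $\phi:(\Xi,\xi,Y_W)\to(\Gamma'\backslash G,m_{\Gamma'\backslash G},\{\Gamma'\})$, forcing $Y_{W,x}$ to be a coset of $\Gamma'$ for $\xi$-a.e.\ $x$. A direct unfolding on $\Xi=\Gamma\backslash(G\times H)$ identifies each $Y_{W,x}$ with (a translate of the inverse of) a cut-and-project set in $G$ cut by the window $Wh_0^{-1}$, where $h_0\in H$ is the $H$-component of any lift of $x$. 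Since $\operatorname{proj}_H(\Gamma)$ is dense in $H$, such cut-and-project sets are aperiodic (their translation stabilisers in $G$ are trivial), whereas every coset of a lattice $\Gamma'<G$ has the full nontrivial stabiliser $\Gamma'$. This yields the desired contradiction.

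\emph{Finiteness $I_\xi(Y_W)<+\infty$.} This is the main new content, extending \cite[Theorem~1.4]{bjorklund2025int} from uniform to arbitrary lattices. The plan is to follow the same overall outline as in \cite{bjorklund2025int}: first, use the transverse correspondence of Section~\ref{sct:transverse} to identify $\xi_{Y_W}$ as (a constant multiple of) the pushforward of $m_H|_W$ under $w\mapsto\Gamma(e_G,w)$; second, use the intersection space construction of Section~\ref{sct:intvol} to unfold the intersection measure into a $\Gamma$-sum of the schematic form
\[
I_\xi(Y_W)\;=\;\frac{1}{\operatorname{covol}(\Gamma)}\sum_{\gamma\in\Gamma}m_H\bigl(W\cap\gamma_H^{-1}W\bigr);
\]
third, apply a Siegel-type unfolding for $\Gamma<G\times H$ to rewrite this sum as a Haar integral over $G\times H$, ultimately bounded by a constant multiple of $m_H(W)^2<+\infty$.

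\emph{Main obstacle.} In the uniform case of \cite{bjorklund2025int} the third step is essentially formal, since a compact fundamental domain of $\Gamma$ directly controls the relevant integrals. In the non-uniform case, the fundamental domain has cusps and $\operatorname{proj}_H(\Gamma)\cap WW^{-1}$ may well be infinite, so summability of the $\Gamma$-sum is not automatic. The crux is to show that despite the cusps, pre-compactness of $WW^{-1}\subset H$ together with the finite covolume of $\Gamma$ in $G\times H$ yields a finite bound on the Haar measure of $D\cap(G\times WW^{-1})$ for a fundamental domain $D$ of $\Gamma$. This tames the cuspidal contributions and allows the Siegel unfolding to deliver the finite upper bound.
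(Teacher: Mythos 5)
Your treatment of the strict inequality is essentially the intended route: by Theorem~\ref{thm:mthm}, equality would force the return-time sets \(Y_{W,x}\) to be cosets of a single lattice in \(G\) for \(\xi\)-a.e.\ \(x\), and the unfolding \(Y_{W,\Gamma\left(g,h\right)}=\operatorname{proj}_{G}\left(\Gamma\cap\left(G\times Wh^{-1}\right)\right)g\) together with the density of \(\operatorname{proj}_{H}\left(\Gamma\right)\) is what rules this out. (Be aware that the asserted aperiodicity is not automatic for every window --- e.g.\ a window which is a coset of a compact open subgroup of \(H\) can produce a genuinely periodic \(\Lambda_{Y_{W}}\) --- so this step needs a nondegeneracy hypothesis on \(W\); the paper does not write this part out, so there is nothing to compare it against beyond noting that Theorem~\ref{thm:mthm} is indeed the intended tool.)

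The finiteness part is where the proposal breaks down, and it also diverges (in both senses) from what the paper actually does. The schematic formula
\[
I_{\xi}\left(Y_{W}\right)=\frac{1}{\operatorname{covol}\left(\Gamma\right)}\sum\nolimits_{\gamma\in\Gamma}m_{H}\left(W\cap\gamma_{H}^{-1}W\right)
\]
cannot be correct: since \(\operatorname{proj}_{H}\left(\Gamma\right)\) is dense in \(H\), there are infinitely many \(\gamma\in\Gamma\) with \(\gamma_{H}\) arbitrarily close to \(e_{H}\), and for each of these \(m_{H}\left(W\cap\gamma_{H}^{-1}W\right)\) is bounded below by a positive constant (it tends to \(m_{H}\left(W\right)>0\) by continuity of translation in \(L^{1}\)), so the right-hand side is \(+\infty\) for every window of positive measure --- and this already happens for uniform \(\Gamma\), so it is not a cusp phenomenon. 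What you have written down is, up to normalization, the total mass of the autocorrelation measure of the cut--and--project set, which is a translation-bounded measure on the noncompact group \(G\) and therefore has infinite mass; the intersection covolume is a different quantity (its closed form, computed in the companion paper in the abelian case, is governed by the measure of a difference set of \(W\), not by a sum over \(\Gamma\)). Correspondingly, the ``main obstacle'' you isolate is not the real one: \(m_{G\times H}\left(D\cap\left(G\times WW^{-1}\right)\right)\leq\operatorname{covol}\left(\Gamma\right)\) holds trivially for any fundamental domain \(D\), and it does nothing to repair the divergent sum. The paper's argument is entirely different and much softer: it proves a general criterion (Theorem~\ref{thm:finiteiv}) that \(I_{\mu}^{r}\left(Y\right)<+\infty\) whenever \(e_{G}\) is isolated in \(\Lambda_{Y}^{2r-1}\), by exhibiting an auxiliary \emph{separated} cross section \(\widetilde{Y}^{\left(r\right)}=\Lambda_{Y}^{r-1}\text{\textperiodcentered}Y^{\otimes\left(r-1\right)}\) of the intersection space \(Y^{\left[r-1\right]}\), invoking Lemma~\ref{lem:separatedli} and Proposition~\ref{prop:intmeasiden}; Theorem~B then follows at once from the fact that all powers \(\Lambda_{Y_{W}}^{r}\) are locally finite for cut--and--project schemes. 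No fundamental domains, Siegel unfolding, or explicit formulas enter. If you want to salvage a computational proof you would first need the correct closed form for \(I_{\xi}\left(Y_{W}\right)\), which is not the one you propose.
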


In our companion paper~\cite{AvBjCuII}, we deal with cut--and--project schemes \(\left(G,H;\Gamma\right)\) for abelian groups \(G\) and \(H\), and in this case we compute explicit formulas for \(\iota_{\xi}\left(Y_{W}\right)\) and \(I_{\xi}\left(Y_{W}\right)\).

\subsection{Groups without lattices}

When the group \(G\) does not admit a lattice, Theorem~\ref{thm:mthm} forces a strict inequality
\(I_{\mu}\left(Y\right)>\iota_{\mu}\left(Y\right)\) for every cross section. It is therefore a natural problem to find good lower bounds for the ratio \(I_{\mu}\left(Y\right)/\iota_{\mu}\left(Y\right)\), and to identify actions that achieve these bounds. In our companion paper~\cite{AvBjCuII}, we give some answers to this question for a large class of abelian groups, including the \(p\)-adic group \(\mathbb{Q}_{p}\) (\(p\) prime) and the group \(\mathbb{A}_{\mathrm{fin}}\) of finite adeles, and show in particular the following:

\begin{thm*}
Let \(G=\mathbb{Q}_{p}\) or \(G=\mathbb{A}_{\mathrm{fin}}\). For every ergodic probability preserving \(G\)-space \(\left(X,\mu\right)\) with a cross section \(Y\) such that \(\Lambda_{Y}\) is uniformly discrete, it holds that
\[I_{\mu}\left(Y\right)\geq 2\cdot\iota_{\mu}\left(Y\right),\]
with equality if and only if there is a transverse \(G\)-factor from \(\left(X,\mu,Y\right)\) onto a cut--and--project space coming from a scheme \(\left(G,\mathbb{R};\Gamma\right)\) equipped with the cross section \(Y_{W}\) for some compact interval \(W\subset\mathbb{R}\).
\end{thm*}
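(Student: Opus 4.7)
The plan is to leverage both Theorem~\ref{thm:mthm} and the transverse factor structure of cut-and-project spaces in order to reduce the problem to a classical convex-geometric inequality on the real line. Since neither $\mathbb{Q}_p$ nor $\mathbb{A}_{\mathrm{fin}}$ admits any lattice, Theorem~\ref{thm:mthm} already forces strict inequality $I_\mu(Y)>\iota_\mu(Y)$, and the real content of the statement is both the quantitative factor of two and the rigidity in the equality case. The uniform discreteness of $\Lambda_Y$ will be essential: it lets one treat $Y$ as a Delone-type cross section over a totally disconnected group, thereby opening the door to cut-and-project models.

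The core step is to construct, starting from an arbitrary uniformly discrete transverse $G$-space $(X,\mu,Y)$, a transverse $G$-factor onto a cut-and-project space $(\Xi,\xi,Y_W)$ associated with a scheme $(G,H;\Gamma)$ and a measurable window $W\subset H$. Because both the intensity and the intersection covolume are preserved under transverse $G$-factors (Propositions~\ref{prop:factor} and~\ref{prop:interfactor}), the problem reduces to the cut-and-project setting, where the explicit formulas developed in the companion paper \cite{AvBjCuII} express both invariants as Haar measures of $W$ and its difference set $W-W$ inside $H$, up to a common factor $\operatorname{covol}(\Gamma)^{-1}$. Because $G$ has no lattices, the density condition on $\operatorname{proj}_H(\Gamma)$ forces $H$ to possess a nontrivial connected component; the decisive case is $H=\mathbb{R}$, where the one-dimensional Brunn--Minkowski (or Steinhaus) inequality
\[m_{\mathbb{R}}(W-W)\;\geq\;2\,m_{\mathbb{R}}(W)\]
gives exactly the required factor of two, and its equality case characterizes $W$ as a compact interval up to null sets.

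The main obstacle is constructing the transverse $G$-factor onto a cut-and-project space from an arbitrary uniformly discrete $(X,\mu,Y)$; this is also where the specific structure of $G=\mathbb{Q}_p$ or $\mathbb{A}_{\mathrm{fin}}$ is indispensable. The strategy I would pursue is to extract, via spectral analysis of the Palm measure $\mu_Y$ and Pontryagin duality, a canonical equivariant \emph{internal coordinate} on $Y$ valued in an lcsc abelian group $H$. Uniform discreteness of $\Lambda_Y$ together with the ergodic/stationary structure of $\mu$ should upgrade the joint graph of the return-times set and this internal coordinate to a lattice $\Gamma\subset G\times H$ of positive covolume, after which the explicit window $W\subset H$ emerges from the intersection structure of $Y$. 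Finally, the equality case of the theorem would be obtained by combining the equality case of Brunn--Minkowski on $\mathbb{R}$ with a rigidity argument --- in the spirit of Theorem~\ref{thm:mthm} applied to the cut-and-project factor itself --- to force $H=\mathbb{R}$ and $W$ to be a compact interval.
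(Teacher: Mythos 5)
First, a point of comparison: this statement is not proved in the present paper at all --- it is quoted from the companion paper \cite{AvBjCuII} (``In our companion paper \dots we show in particular the following''), so there is no in-paper proof to measure your attempt against. Judged on its own terms, your proposal has a genuine gap at its core. Everything rests on the claim that an \emph{arbitrary} probability preserving \(G\)-space with a uniformly discrete cross section admits a transverse \(G\)-factor onto a cut--and--project space; you correctly identify this as ``the main obstacle'' but then offer only a programme (``spectral analysis of the Palm measure'', ``Pontryagin duality'', ``upgrade the joint graph \dots to a lattice \(\Gamma\subset G\times H\)'') rather than an argument. This is not a minor missing lemma --- it is a strong structure theorem that would constitute the entire content of the hard direction, and it sits in tension with the statement itself: since transverse \(G\)-factors preserve both \(\iota_{\mu}\) and \(I_{\mu}\) exactly (Propositions~\ref{prop:factor} and~\ref{prop:interfactor}), your reduction would force \emph{every} uniformly discrete transverse \(G\)-space to have the invariants of some cut--and--project window, whereas the theorem's equality clause singles out the existence of such a factor (with interval window) as the \emph{exceptional} rigid case. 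A route more consistent with the tools actually developed here would attack the inequality directly through Kac's lemma~\ref{prop:Kac}, exploiting that in \(\mathbb{Q}_{p}\) and \(\mathbb{A}_{\mathrm{fin}}\) balls are cosets of compact open subgroups, so that Voronoi cells of uniformly discrete sets have a very rigid coset structure; the cut--and--project characterization would then emerge only in the equality analysis, as in the proof of Theorem~\ref{thm:mthm+}.

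Two secondary gaps compound the first. (i) Even granting a factor onto some cut--and--project space \(\left(G,H;\Gamma\right)\), your reduction to the one-dimensional Brunn--Minkowski inequality \(m_{\mathbb{R}}\left(W-W\right)\geq 2\cdot m_{\mathbb{R}}\left(W\right)\) presumes \(H=\mathbb{R}\); the assertion that density of \(\operatorname{proj}_{H}\left(\Gamma\right)\) forces \(H\) to have a nontrivial connected component is unjustified, and schemes with \(H\) totally disconnected (e.g. \(\mathbb{Z}\left[1/pq\right]<\mathbb{Q}_{p}\times\mathbb{Q}_{q}\)) or with \(H=\mathbb{R}^{d}\), \(d\geq 2\), must be handled separately before the inequality and, especially, its equality case can be localized to \(\mathbb{R}\). (ii) The equality case requires not only the Steinhaus/Brunn--Minkowski rigidity (that \(W\) is an interval up to null sets) but also an argument that equality excludes all other internal spaces \(H\); your final sentence defers this to an unspecified ``rigidity argument.'' As written, the proposal is a plausible outline of where a proof might live, but none of its three load-bearing steps is established.
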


A class of groups without lattices introduced in~\cite[\S2.4]{BjHa2018} is the one-parameter family of nilpotent Lie groups \(G_{\lambda}\), \(\lambda\in\left(0,+\infty\right)\). It was shown there that \(G_{\lambda}\) admits a lattice if and only if \(\lambda\) is rational, and that \(G_{\lambda}\) admits cut--and--project actions when \(\lambda\) is an algebraic irrational. It was shown in~\cite[Theorem~1.5]{Machado2020} that \(G_{\lambda}\) admits no cut--and--project action when \(\lambda\) is transcendental. This motivates the following:

\begin{que}
For the nilpotent Lie group \(G_{\lambda}\):
\begin{enumerate}
    \item When \(\lambda\) is an algebraic irrational, do cut--and--project \(G_{\lambda}\)-spaces minimize \(I_{\mu}\left(Y\right)\) among all transverse \(G_{\lambda}\)-spaces?
    \item When \(\lambda\) is transcendental, does there exist a transverse \(G_{\lambda}\)-space \(\left(X,\mu,Y\right)\) with \(I_{\mu}\left(Y\right)<+\infty\)?
\end{enumerate}
\end{que}

\section{General preliminaries}

Throughout this work, \(G\) stands for a locally compact, second countable (lcsc) unimodular group, and \(m_{G}\) stands for a fixed Haar measure of \(G\). Let \(X\) be a standard Borel space. That is, \(X\) is equipped with a \(\sigma\)-algebra which is the Borel \(\sigma\)-algebra of some Polish topology (completely metrizable and separable) on \(X\). We will call \(X\) a {\bf Borel \(G\)-space} when \(G\) acts on \(X\) in a Borel fashion, namely the action map \(\mathrm{a}:G\times X\to X\) is jointly measurable. As customary, we will abbreviate the action map by \(\left(g,x\right)\mapsto g.x\), and relate to each group element \(g\) as an actual Borel automorphism of \(X\). A {\bf cross section}\footnote{Some sources use the term \emph{transversal} to refer to what is here called a cross section. However, in contemporary literature, a \emph{transversal} usually refers to a set that intersects each orbit exactly once.} of a Borel \(G\)-space \(X\) is a Borel set \(Y\) such that
\[Y_{x}\coloneqq \{g\in G: g.x\in Y\}\]
is a nonempty and locally finite for every \(x\in X\). In particular, \(Y\) intersects every orbit in \(X\), thus \(G.Y=X\). Importantly, since \(Y_{x}\) contains a coset of the stabilizer of \(x\), a necessary condition to admitting a cross section is that all stabilizers are at most countable. We note the identity
\[Y_{g.x}=Y_{x}g^{-1},\quad g\in G,\,\,x\in X.\]

The reader should keep in mind the fundamental setting of point processes on \(G\), mentioned in the introduction, which is a (very important) particular case of our broader setting. Let \(G^{\ast}\) be the space of locally finite configuration of points in \(G\), with the \(\sigma\)-algebra generated by the maps
\[N_{K}:G^{\ast}\to\mathbb{Z}_{\geq 0}\cup\{+\infty\},\quad N_{K}\left(\omega\right)=\left|\omega\cap K\right|,\]
for all relatively compact Borel sets \(K\subset G\). Then \(G^{\ast}\) becomes a Borel \(G\)-space with the action \(g.\omega=\omega g^{-1}=\{xg^{-1}: x\in\omega\}\), and it has the natural cross section
\[G_{o}^{\ast}\coloneqq \{\omega\in G^{\ast}: e_{G}\in\omega\}.\]
Thus, when mentioning the setting of \emph{point processes}, it is in our language nothing but the Borel \(G\)-space \(G^{\ast}\) with its canonical cross section \(G_{o}^{\ast}\). Observe that for every Borel \(G\)-space \(X\) with a cross section \(Y\), the map \(X\to G^{\ast}\), \(x\mapsto Y_{x}\), is \(G\)-equivariant and maps \(Y\) to \(G_{o}^{\ast}\).

For a standard Borel space \(X\), we denote by \(\mathcal{M}\left(X\right)\) the set of Borel \(\sigma\)-finite measures on \(X\). For \(\mu\in\mathcal{M}\left(X\right)\) and a Borel function \(f:X\to\left[0,+\infty\right]\) we write
\[\mu\left(f\right)\coloneqq \int_{X}f\left(x\right)d\mu\left(x\right).\]
It is convenient for us, all along this work, to work with test functions taking values in \(\left[0,+\infty\right]\). The main reason is that this class of test functions is closed to periodizations (see below).

For a Borel \(G\)-space \(X\), we denote by
\[\mathcal{M}^{G}\left(X\right)\subset\mathcal{M}\left(X\right)\]
the set of measures in \(\mathcal{M}\left(X\right)\) which are \(G\)-invariant. When \(\mu\in\mathcal{M}^{G}\left(X\right)\), we will call \(\left(X,\mu\right)\) a {\bf measure preserving \(G\)-space}. A measure preserving \(G\)-space \(\left(X,\mu\right)\) is {\bf ergodic} if every \(G\)-invariant set in \(X\) is either \(\mu\)-null or \(\mu\)-conull.

\section{Mecke equation and relative invariance}\label{sct:mecke}

\subsection{Background}

Here we establish a general result about classes of measures defined on cross sections, which will be important to the upcoming transverse correspondence~\ref{thm:corresp}. In point processes, Palm measures admit a classical intrinsic characterization known as the \emph{Mecke equation}~\cite{mecke1967}. The relations between the Mecke equation and various notions of invariance are well-studied in the point processes literature, and one such property was introduced by Thorisson under the name \emph{point-stationarity} (see~\cite{last2009} and the references therein). The fact that Palm measures are characterized by point-stationarity is classical for point processes on \(\mathbb{R}\) and it was generalized by Heveling and Last, first for point processes on \(\mathbb{R}^{d}\)~\cite[Theorem 4.1]{heveling2005}, and then for point processes on lcsc abelian groups~\cite{heveling2007} (see also~\cite[Theorem 3.44]{last2009} and~\cite[\S6]{baccelli2024}). For general lcsc groups this characterization was conjectured by G.~Last~\cite[\S3.10]{last2009}.

In the following we formulate the Mecke equation and confirm this in the broader setting of general cross sections and arbitrary lcsc groups (not only unimodular). Specifically, we show that a measure on a cross section satisfies the Mecke equation if and only if it is invariant under the countable Borel equivalence relation induced by the group orbits on the cross section. This fundamental notion of measure invariance in ergodic theory, is precisely point-stationarity in the terminology of point processes. Our proof is simplified by invoking the Lusin--Novikov uniformization theorem several times.

\subsection{Basic definitions}

Let us start by defining measure invariance to countable Borel equivalence relations. For the basics of this notion of measure invariance see~\cite[\S8]{kechris2004topics},~\cite[\S4.2 \& 4.7]{Kechris2024}. Let \(A\) be a standard Borel space and \(E\) a countable Borel equivalence relation on \(A\). Thus, \(E\) is a Borel subset of \(A\times A\), the relation \(\left(a,a^{\prime}\right)\in E\) for \(a,a^{\prime}\in A\) is an equivalence relation, and each \(E\)-class \(E\left(a\right)\coloneqq\{a^{\prime}\in A:\left(a,a^{\prime}\right)\in E\}\) is at most countable. We associate with \(E\) the set (pseudo-group)  \(\bigl[\!\bigl[E\bigr]\!\bigr]\) of all partial transformations
\[\tau:\mathrm{Dom}\left(\tau\right)\longrightarrow\mathrm{Rng}\left(\tau\right)\text{ for some Borel sets }\mathrm{Dom}\left(\tau\right),\mathrm{Rng}\left(\tau\right)\text{ in }A,\]
such that \(\tau\) is a Borel bijection of \(\mathrm{Dom}\left(\tau\right)\) onto \(\mathrm{Rng}\left(\tau\right)\), and
\[\mathrm{Graph}\left(\tau\right)\coloneqq\{\left(a,\tau\left(a\right)\right):a\in\mathrm{Dom}\left(\tau\right)\}\subseteq E.\]

\begin{defn}\label{dfn:invcber}
Let \(E\) be a countable Borel equivalence relation on a standard Borel space \(A\). Define
\[\mathcal{M}^{E}\left(A\right)\subset\mathcal{M}\left(A\right)\]
to be the set of all measures \(\nu\in\mathcal{M}\left(A\right)\) that satisfy
\[\nu\left(\mathrm{Dom}\left(\tau\right)\right)=\nu\left(\mathrm{Rng}\left(\tau\right)\right)\text{ for every }\tau\in\bigl[\!\bigl[E\bigr]\!\bigr].\]
\end{defn}

For a Borel \(G\)-space with a cross section \(Y\), define the countable Borel equivalence relation
\[E_{G}^{Y}\coloneqq \left\{\left(y,y^{\prime}\right)\in Y\times Y: G.y=G.y^{\prime}\right\}\subseteq Y\times Y.\]
Thus, \(E_{G}^{Y}\) is the restriction to \(Y\) of the orbit equivalence relation induced by the action of \(G\) on \(X\). Since \(Y\) is a cross section, \(E_{G}^{Y}\) is indeed countable, namely each \(E_{G}^{Y}\)-class is countable.

\begin{rem}
The notion of \emph{point-stationarity}, that is defined as invariance under \emph{point-shifts} in the language of point processes, corresponds precisely to this notion of measure invariance to countable Borel equivalence relations. The \emph{point-shifts} are precisely the elements of \(\bigl[\!\bigl[E_{G}^{Y}\bigr]\!\bigr]\), where \(E_{G}^{Y}\) is the Borel equivalence relation obtained by restricting the (potentially uncountable) orbit equivalence relation \(G\) induces on \(G^{\ast}\) to the canonical cross section \(Y = G_{o}^{\ast}\), consisting of point configurations that contain the identity element. See~\cite[\S3.1]{heveling2005} and the references therein.
\end{rem}

We now formulate the Mecke equation for a Borel \(G\)-space \(X\) with a cross section \(Y\). To this end, define first two types of \emph{periodizations} of a Borel function \(f:G\times Y\to\left[0,+\infty\right]\):
\begin{itemize}
    \item The \(X\){\bf-periodization} of \(f\) is the Borel function
    \[f_{X}:X\longrightarrow\left[0,+\infty\right],\quad f_{X}\left(x\right)\coloneqq\sum\nolimits_{g\in Y_{x}}f\left(g^{-1},g.x\right).\]
    \item The \(Y\){\bf-periodization} of \(f\) is the Borel function
    \[f_{Y}:Y\longrightarrow\left[0,+\infty\right],\quad f_{Y}\left(y\right)\coloneqq \sum\nolimits_{g\in Y_{y}}f\left(g,y\right).\]
\end{itemize}

The Mecke equation relates the \(X\)-periodization and the \(Y\)-periodization of every \(f\) as follows.

\begin{defn}
Let \(X\) be a Borel \(G\)-space with a cross section \(Y\). Define
\[\mathcal{M}_{\mathrm{Mecke}}^{G}\left(Y\right)\subset\mathcal{M}\left(Y\right)\]
to be the set of measures \(\nu\in\mathcal{M}\left(Y\right)\) satisfying the {\bf Mecke equation}(s):
\begin{equation}\label{eq:mecke}
\nu\left(f_{X}\right)=\nu\left(f_{Y}\right)\text{ for every Borel function }f:G\times Y\to\left[0,+\infty\right].
\end{equation}
\end{defn}

We can now formulate the main result of this section, which was conjectured in~\cite[\S3.10]{last2009}.

\begin{thm}\label{thm:relativemecke}
Let \(G\) be an lcsc group. Then for every Borel \(G\)-space \(X\) with a cross section \(Y\),
\[\mathcal{M}_{\mathrm{Mecke}}^{G}\left(Y\right)=\mathcal{M}^{E_{G}^{Y}}\left(Y\right).\]
Thus, a measure in \(\mathcal{M}\left(Y\right)\) satisfies the Mecke equation \eqref{eq:mecke} if and only if it is \(E_{G}^{Y}\)-invariant.
\end{thm}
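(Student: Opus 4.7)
My plan is to repackage both sides of the claimed equivalence as equalities of a single pair of measures on the equivalence relation \(E \coloneqq E_G^Y\), and then invoke Lusin--Novikov uniformization to do the bookkeeping. Given \(\nu \in \mathcal{M}(Y)\), define the Borel measure \(\nu^E\) on \(E\) by
\[\nu^E(\alpha) \coloneqq \int_Y \sum\nolimits_{y' \in E(y)} \alpha(y, y')\, d\nu(y)\]
for Borel \(\alpha : E \to [0, +\infty]\), where \(E(y)\) is the \(E\)-class of \(y\), and let \(\iota(y, y') = (y', y)\) denote the swap involution of \(E\). A direct application of Lusin--Novikov --- writing \(E\) as a countable disjoint union of graphs of partial Borel bijections \(\tau_n \in \bigl[\!\bigl[E\bigr]\!\bigr]\), so that \(\nu^E\) and \(\iota_* \nu^E\) evaluate on \(\mathrm{Graph}(\tau_n)\) as \(\nu(\mathrm{Dom}(\tau_n))\) and \(\nu(\mathrm{Rng}(\tau_n))\) respectively --- yields the characterization
\[\nu \in \mathcal{M}^E(Y) \iff \nu^E = \iota_* \nu^E.\]

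To bring the Mecke equation into the same framework, I would apply Lusin--Novikov to the Borel set \(\{(g, y, y') \in G \times Y \times Y : g.y = y'\}\), which fibers over \(E\) with countable fibers \(\{g \in G : g.y = y'\}\) (each being a coset of the countable stabilizer of \(y\) forced by the cross section assumption). This produces a countable family of Borel partial selectors enumerating these fibers, and consequently the function
\[\tilde\beta_f(y, y') \coloneqq \sum\nolimits_{g \in G,\ g.y = y'} f(g, y), \quad (y, y') \in E,\]
is Borel for every Borel \(f : G \times Y \to [0, +\infty]\). Grouping the defining sums of \(f_Y(y)\) and \(f_X(y)\) by the common value \(y' = g.y\) --- and, for the latter, substituting \(h = g^{-1}\) to recognize that \(h.y' = y\) --- gives
\[\nu(f_Y) = \nu^E(\tilde\beta_f), \qquad \nu(f_X) = \nu^E(\tilde\beta_f \circ \iota) = (\iota_* \nu^E)(\tilde\beta_f).\]
Thus the Mecke equation becomes \(\nu^E(\tilde\beta_f) = (\iota_* \nu^E)(\tilde\beta_f)\) for every \(f\).

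To close the loop, I would verify that \(f \mapsto \tilde\beta_f\) sweeps out all nonnegative Borel functions on \(E\): given such a \(\beta\), picking a single Borel selector \(\sigma : E \to G\) with \(\sigma(y, y').y = y'\) (again by Lusin--Novikov) and setting \(f(g, y) \coloneqq \beta(y, g.y) \cdot 1_Y(g.y) \cdot 1_{\{g = \sigma(y,\, g.y)\}}\) gives \(\tilde\beta_f = \beta\). Combined with the first paragraph, this yields the desired equivalence of the Mecke property and \(E\)-invariance. The principal obstacle --- and the reason this generalization beyond the classical abelian and free cases requires care --- is the simultaneous Borel parametrization of the fibers \(\{g \in G : g.y = y'\}\) as \((y, y')\) ranges over \(E\), where one cannot rely on continuous sections or an exponential chart. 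Lusin--Novikov uniformization, invoked at each of the three junctures above, is exactly the tool that makes the formal countable-sum manipulations rigorous.
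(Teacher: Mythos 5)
Your proposal is correct and follows essentially the same route as the paper: your kernel \(\tilde\beta_f(y,y')=\sum_{g.y=y'}f(g,y)\) is exactly the paper's function \(F\) from Lemma~\ref{lem:lusinnovi}(2) (with the identities \(\overrightarrow{F}=f_Y\), \(\overleftarrow{F}=f_X\)), and your characterization \(\nu\in\mathcal{M}^{E}(Y)\iff\nu^{E}=\iota_{*}\nu^{E}\) is the mass transport principle of Fact~\ref{fct:mtp} together with its (standard) converse. The only difference is packaging --- your surjectivity of \(f\mapsto\tilde\beta_f\) via a single Borel selector subsumes the paper's Lemma~\ref{lem:lusinnovi}(1), which performs the same construction separately for each \(\tau\in\bigl[\!\bigl[E_{G}^{Y}\bigr]\!\bigr]\).
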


Our proof of Theorem~\ref{thm:relativemecke} uses crucially the Lusin--Novikov uniformization theorem. We use the following form of this classical theorem that appears in~\cite[Theorem (18.10)]{kechris2012classical}.\footnote{Note that by the first part of~\cite[Theorem (18.10)]{kechris2012classical}, every \emph{Borel graph} is an actual graph of a Borel map. Then the following is a reformulation of the \emph{Moreover} part of~\cite[Theorem (18.10)]{kechris2012classical}.}

\begin{thm}[Lusin--Novikov]\label{thm:lusinnovi}
Let \(A,B\) be standard Borel spaces and let \(P\subseteq A\times B\) be a Borel set such that \(P_{a}\coloneqq\left\{b\in B:\left(a,b\right)\in P\right\}\) is at most countable for every \(a\in A\). Then \(D\coloneqq\operatorname{proj}_{A}\left(P\right)\) is a Borel set, and there exist Borel maps \(\lambda_{n}:D\to B, n\in\mathbb{N}\), such that \(P_{a}=\{\lambda_{n}\left(a\right):n\in\mathbb{N}\}\) for every \(a\in D\).
\end{thm}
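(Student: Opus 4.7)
The plan is to deduce the theorem from a decomposition lemma: any Borel $P \subseteq A \times B$ with countable vertical sections can be written as a disjoint countable union $P = \bigsqcup_{n \in \mathbb{N}} \mathrm{Graph}(f_n)$ of graphs of Borel partial functions $f_n \colon D_n \to B$ on Borel domains $D_n \subseteq A$. Granting the lemma, the projection $D = \operatorname{proj}_A(P) = \bigcup_n D_n$ is Borel as a countable union, and the required selectors $\lambda_n \colon D \to B$ are obtained by extending each $f_n$ from $D_n$ to all of $D$ via a ``first available'' rule: for $a \in D \setminus D_n$, set $\lambda_n(a) = f_{m(a)}(a)$, where $m(a) = \min\{k : a \in D_k\}$. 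The index function $m$ is Borel since each $D_k$ is, so each $\lambda_n$ is Borel; by construction, $\{\lambda_n(a) : n \in \mathbb{N}\} = P_a$ for every $a \in D$, because every point of $P_a$ is realized as $f_n(a)$ on the unique index $n$ for which $(a,b) \in \mathrm{Graph}(f_n)$.

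For the decomposition lemma, I would first reduce to the case where $B$ is the Baire space $\mathbb{N}^{\mathbb{N}}$ via a Borel isomorphism of standard Borel spaces; the countable $B$ case is immediate, by partitioning $P$ along the singletons of $B$. Fix the tree $(U_s)_{s \in \mathbb{N}^{<\mathbb{N}}}$ of basic clopen cylinders in $\mathbb{N}^{\mathbb{N}}$, and consider, for each $s$, the set
\[
D_s = \bigl\{a \in A : P_a \cap U_s \text{ is a singleton}\bigr\},
\]
together with the partial function $f_s \colon D_s \to B$ sending $a$ to that unique element. The idea is to cover $P$ by countably many graphs of this form by iterating through the cylinder tree: since every countable analytic set $P_a$ is scattered, its Cantor--Bendixson derivatives terminate at $\emptyset$, so each point of $P_a$ is eventually the unique element of $P_a \cap U_s$ for some $s$, provided the points isolated at earlier Cantor--Bendixson stages have first been removed in a Borel fashion.

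The main obstacle is showing that the sets $D_s$, and the refined sets arising in the Cantor--Bendixson stratification of the $P_a$'s, are in fact Borel rather than merely analytic. This is Lusin's classical ``small sections'' theorem: a Borel set with countable vertical sections has Borel projection, and more generally the level sets of the cardinality function $a \mapsto |P_a \cap U_s|$ are Borel. The standard argument encodes countability of $P_a$ through well-foundedness of an auxiliary tree in $\mathbb{N}^{<\mathbb{N}}$, producing a $\Pi^1_1$ description of sets such as $\{a : |P_a \cap U_s| \leq n\}$; combined with their manifest analytic description, Lusin's separation theorem yields Borelness. Once this is in hand, the cylinder-tree enumeration produces a countable family of Borel graphs whose disjoint union is $P$, giving the decomposition and hence the theorem.
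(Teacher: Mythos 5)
You should first note that the paper does not prove this statement at all: it is quoted verbatim from \cite[Theorem (18.10)]{kechris2012classical} and used as a black box, so there is no in-paper argument to compare against. Your overall architecture --- decompose \(P\) into a countable disjoint union of graphs of Borel partial functions, then extend to total selectors \(\lambda_{n}\) on \(D\) by a ``first available index'' rule --- is exactly the standard packaging of Lusin--Novikov (the decomposition is the main clause of (18.10), the selectors are its \emph{Moreover} part), and that extension step is correct as written.

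The proof you propose for the decomposition lemma, however, has two genuine gaps. First, the Cantor--Bendixson scheme does not work in the form described, because a countable subset of a Polish space need not be scattered in its relative topology. Take \(P=A\times\mathbb{Q}\subseteq A\times\mathbb{R}\): every section is countable, but \(P_{a}=\mathbb{Q}\) has no isolated points, so no point of \(P_{a}\) is ever the unique element of \(P_{a}\cap U_{s}\), the first derivative of \(P_{a}\) equals \(P_{a}\), and your iteration removes nothing. (Countable \emph{Polish} spaces are scattered by Baire category, but \(P_{a}\) carries only the subspace topology and can be homeomorphic to \(\mathbb{Q}\).) The classical derivative-based proof first reduces to the case of closed sections --- e.g.\ by writing \(P\) as an injective continuous image of a closed subset of \(A\times\mathbb{N}^{\mathbb{N}}\) and working with the pulled-back set --- and even then must control the \(a\)-dependent transfinite length of the derivative sequence by a boundedness argument; your sketch does neither. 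Second, the Borelness of sets such as \(\left\{a:\left|P_{a}\cap U_{s}\right|\leq n\right\}\) is claimed to follow from a \(\Pi_{1}^{1}\) description via well-foundedness ``combined with their manifest analytic description''. These sets have no manifest analytic description: the complement \(\left\{a:\left|P_{a}\cap U_{s}\right|\geq n+1\right\}\) is the one that is manifestly analytic (project out an \(\left(n+1\right)\)-tuple of distinct points), so the set itself is manifestly co-analytic, and a well-foundedness encoding again only yields \(\Pi_{1}^{1}\). Supplying the missing \(\Sigma_{1}^{1}\) description (or Borelness directly) is precisely the hard core of the theorem; the standard proofs obtain it from effective descriptive set theory (every countable \(\Sigma_{1}^{1}\left(a\right)\) set consists of \(\Delta_{1}^{1}\left(a\right)\) points, plus \(\Delta\)-selection), from the first reflection theorem, or from Novikov's multiple separation theorem, none of which your outline actually carries out.
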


The first way we rely on the Lusin--Novikov theorem~\ref{thm:lusinnovi} is the following well-known reformulation of the general notion of invariance of measures to countable Borel equivalence relations. For a proof (relying on the Lusin--Novikov theorem)  see~\cite[\S8]{kechris2004topics}.

\begin{fct}\label{fct:mtp}
Let \(A\) be a standard Borel space and \(E\) a countable Borel equivalence relation on \(A\). Then every \(\nu\in\mathcal{M}^{E}\left(A\right)\) (as in Definition~\ref{dfn:invcber}) satisfies the {\bf mass transport principle}:
\[\nu\big(\overrightarrow{F}\big)=\nu\big(\overleftarrow{F}\big)\text{ for every Borel function }F:E\to\left[0,+\infty\right],\]
where the Borel functions \(\overrightarrow{F},\overleftarrow{F}:A\to\left[0,+\infty\right]\) are defined by
\[\overrightarrow{F}\left(a\right)\coloneqq\sum\nolimits_{a^{\prime}\in E\left(a\right)}F\left(a,a^{\prime}\right)\quad\text{and}\quad\overleftarrow{F}\left(a\right)\coloneqq\sum\nolimits_{a^{\prime}\in E\left(a\right)}F\left(a^{\prime},a\right).\]
\end{fct}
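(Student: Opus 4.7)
The plan is to derive the mass transport principle from the $\bigl[\!\bigl[E\bigr]\!\bigr]$-invariance of $\nu$ by decomposing $E$ into countably many graphs of partial Borel bijections, and then applying invariance on each piece. Concretely, I aim to produce $\tau_n \in \bigl[\!\bigl[E\bigr]\!\bigr]$, $n \in \mathbb{N}$, such that
\[E = \bigsqcup_{n \in \mathbb{N}} \mathrm{Graph}\left(\tau_n\right).\]
Once this decomposition is available, both periodizations split cleanly:
\[\overrightarrow{F}\left(a\right) = \sum\nolimits_{n} F\left(a, \tau_n\left(a\right)\right)\mathbf{1}_{\mathrm{Dom}\left(\tau_n\right)}\left(a\right), \qquad \overleftarrow{F}\left(a\right) = \sum\nolimits_{n} F\left(\tau_n^{-1}\left(a\right), a\right)\mathbf{1}_{\mathrm{Rng}\left(\tau_n\right)}\left(a\right),\]
and by Tonelli (all summands being nonnegative) it suffices to establish, for each $n$, the change-of-variables identity
\[\int_{\mathrm{Dom}\left(\tau_n\right)} F\left(a, \tau_n\left(a\right)\right)\, d\nu\left(a\right) = \int_{\mathrm{Rng}\left(\tau_n\right)} F\left(\tau_n^{-1}\left(b\right), b\right)\, d\nu\left(b\right).\]

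To obtain the decomposition I would apply Theorem~\ref{thm:lusinnovi} twice. First, to the set $E \subseteq A \times A$ with projection onto the first coordinate, producing Borel maps $\lambda_n : A \to A$ with $E\left(a\right) = \{\lambda_n\left(a\right) : n \in \mathbb{N}\}$ for every $a \in A$. Since $\mathrm{Graph}\left(\lambda_n\right) \subseteq E$, any fiber $\lambda_n^{-1}\left(b\right)$ is contained in the $E$-class of $b$ and is therefore countable. A second application of Lusin--Novikov, now to the set $\{\left(b, a\right) : \lambda_n\left(a\right) = b\}$ with projection onto the first coordinate, yields a Borel partition $A = \bigsqcup_{k} A_{n,k}$ on each piece of which $\lambda_n$ is injective; thus $\lambda_n|_{A_{n,k}}$ is a partial Borel bijection in $\bigl[\!\bigl[E\bigr]\!\bigr]$. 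Re-enumerating these pieces as $\sigma_1, \sigma_2, \ldots$ and defining $\tau_n$ to be the restriction of $\sigma_n$ to the Borel set whose image under $\left(\mathrm{id}, \sigma_n\right)$ avoids $\bigcup_{j<n} \mathrm{Graph}\left(\tau_j\right)$ produces the required disjoint decomposition.

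The remaining ingredient is the change-of-variables identity for a single $\tau \in \bigl[\!\bigl[E\bigr]\!\bigr]$. For every Borel subset $B \subseteq \mathrm{Dom}\left(\tau\right)$, the restriction $\tau|_B$ again lies in $\bigl[\!\bigl[E\bigr]\!\bigr]$, so $\nu\left(B\right) = \nu\left(\tau\left(B\right)\right)$ by Definition~\ref{dfn:invcber}; equivalently, $\tau_{\ast}\bigl(\nu|_{\mathrm{Dom}\left(\tau\right)}\bigr) = \nu|_{\mathrm{Rng}\left(\tau\right)}$. The integral identity then follows for indicators, extends to nonnegative simple functions by linearity, and to arbitrary nonnegative Borel functions by monotone convergence, applied to $g\left(b\right) = F\left(\tau^{-1}\left(b\right), b\right)\mathbf{1}_{\mathrm{Rng}\left(\tau\right)}\left(b\right)$.

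The main obstacle is the descriptive-set-theoretic bookkeeping that realizes $E$ as a \emph{disjoint} countable union of graphs of partial Borel bijections in $\bigl[\!\bigl[E\bigr]\!\bigr]$; the two-step Lusin--Novikov reduction plus a graph-subtraction argument is the right tool, but care is needed because the naive covering produced by one application of Lusin--Novikov gives only functions, not partial bijections, and these functions may overlap. Once the decomposition is in hand, the rest is a routine combination of Tonelli's theorem with the elementary upgrade from mass invariance to integral invariance.
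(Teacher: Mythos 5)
Your argument is correct and is essentially the standard proof that the paper itself omits, deferring instead to~\cite[\S 8]{kechris2004topics}: decompose \(E\) into a countable disjoint union of graphs of elements of \(\bigl[\!\bigl[E\bigr]\!\bigr]\) via two applications of Lusin--Novikov plus graph subtraction, then combine Tonelli with the push-forward identity \(\tau_{\ast}\bigl(\nu\mid_{\mathrm{Dom}\left(\tau\right)}\bigr)=\nu\mid_{\mathrm{Rng}\left(\tau\right)}\). The one point you leave tacit is that an injective Borel map has Borel image and Borel inverse (Lusin--Souslin), which is what certifies that the restrictions \(\lambda_{n}\mid_{A_{n,k}}\) genuinely belong to \(\bigl[\!\bigl[E\bigr]\!\bigr]\); everything else checks out.
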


The second way we exploit the Lusin--Novikov theorem~\ref{thm:lusinnovi} is in both parts of the following lemma, which together with Fact~\ref{fct:mtp} is essentially all we need to prove Theorem~\ref{thm:relativemecke}.

\begin{lem}\label{lem:lusinnovi}
Let \(X\) be a Borel \(G\)-space with a cross section \(Y\), and consider the associated countable Borel equivalence relation \(E_{G}^{Y}\).
\begin{enumerate}
    \item For every element \(\tau\in\bigl[\!\bigl[E_{G}^{Y}\bigr]\!\bigr]\) there exists a Borel function \(f:G\times Y\to\left[0,+\infty\right]\) solving
    \[f_{Y}\left(y\right)=\mathbf{1}_{\mathrm{Dom}\left(\tau\right)}\left(y\right)\quad\text{and}\quad f_{X}\left(y\right)=\mathbf{1}_{\mathrm{Rng}\left(\tau\right)}\left(y\right)\quad\text{for all }y\in Y.\]
    \item For every Borel function \(f:G\times Y\to\left[0,+\infty\right]\) there exists a Borel function \(F:E_{G}^{Y}\to\left[0,+\infty\right]\) solving
    \[\overrightarrow{F}\left(y\right)=f_{Y}\left(y\right)\quad\text{and}\quad\overleftarrow{F}\left(y\right)=f_{X}\left(y\right)\quad\text{for all }y\in Y.\]
\end{enumerate}
\end{lem}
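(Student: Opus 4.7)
My plan is to use the Lusin--Novikov theorem (Theorem~\ref{thm:lusinnovi}) as the common tool for both parts, since the fibres appearing below are countable: stabilizers in $G$ are countable (a prerequisite for the existence of any cross section), and each $E_{G}^{Y}$-class is countable by assumption. The combinatorial identities are then essentially forced, and the only substantive issue is securing Borel measurability of the selectors.

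For part (1), I would first apply Lusin--Novikov to the Borel set
\[P \coloneqq \bigl\{\left(y,g\right)\in\mathrm{Dom}\left(\tau\right)\times G : g.y=\tau\left(y\right)\bigr\},\]
whose $y$-fibres are cosets of the stabilizer of $y$, hence countable, to extract a Borel selector $\sigma:\mathrm{Dom}\left(\tau\right)\to G$ with $\sigma\left(y\right).y=\tau\left(y\right)$. Then set
\[f\left(g,y\right)\coloneqq\mathbf{1}_{\mathrm{Dom}\left(\tau\right)}\left(y\right)\cdot\mathbf{1}_{\left\{g=\sigma\left(y\right)\right\}}.\]
Since $\sigma\left(y\right).y=\tau\left(y\right)\in Y$ gives $\sigma\left(y\right)\in Y_{y}$, the identity $f_{Y}\left(y\right)=\mathbf{1}_{\mathrm{Dom}\left(\tau\right)}\left(y\right)$ is immediate. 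For $f_{X}\left(y\right)=\sum_{g\in Y_{y}}f\left(g^{-1},g.y\right)$, the summand is non-zero precisely when $g.y\in\mathrm{Dom}\left(\tau\right)$ and $g^{-1}=\sigma\left(g.y\right)$; the second condition rewrites as $y=\sigma\left(g.y\right).\left(g.y\right)=\tau\left(g.y\right)$, forcing $y\in\mathrm{Rng}\left(\tau\right)$, $g.y=\tau^{-1}\left(y\right)$, and hence $g=\sigma\left(\tau^{-1}\left(y\right)\right)^{-1}$ uniquely. Thus $f_{X}\left(y\right)=\mathbf{1}_{\mathrm{Rng}\left(\tau\right)}\left(y\right)$.

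For part (2), I would define
\[F\left(y,y'\right)\coloneqq\sum_{g\in G:\,g.y=y'}f\left(g,y\right),\qquad \left(y,y'\right)\in E_{G}^{Y},\]
and arrange Borel measurability by applying Lusin--Novikov to $Q\coloneqq\bigl\{\left(y,y',g\right)\in E_{G}^{Y}\times G:g.y=y'\bigr\}$, partitioning $Q$ into countably many disjoint Borel graphs of Borel maps $\sigma_{n}:D_{n}\to G$ with $D_{n}\subseteq E_{G}^{Y}$, and writing $F\left(y,y'\right)=\sum_{n}f\left(\sigma_{n}\left(y,y'\right),y\right)\mathbf{1}_{D_{n}}\left(y,y'\right)$. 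The identities then follow by reindexing: the map $g\mapsto\left(g.y,g\right)$ is a bijection between $Y_{y}$ and $\left\{\left(y',g\right):y'\in E_{G}^{Y}\left(y\right),\,g.y=y'\right\}$, so $\overrightarrow{F}\left(y\right)=\sum_{g\in Y_{y}}f\left(g,y\right)=f_{Y}\left(y\right)$; and the substitution $h=g^{-1}$ in $\overleftarrow{F}\left(y\right)=\sum_{y'}\sum_{g.y'=y}f\left(g,y'\right)$ yields $\sum_{h\in Y_{y}}f\left(h^{-1},h.y\right)=f_{X}\left(y\right)$. The hard part, as advertised, is not these identities but ensuring that $\sigma$ and $\left(\sigma_{n}\right)$ can be chosen Borel simultaneously for all inputs, which is precisely what Lusin--Novikov delivers.
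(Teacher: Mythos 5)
Your proposal is correct and follows essentially the same route as the paper: in part (1) you apply Lusin--Novikov to the same set $P$ (with coordinates swapped) to get a Borel selector and define the identical test function, and in part (2) your disjointification of $Q$ into Borel graphs is exactly the paper's $\kappa_{n}$-indicator trick, with the same reindexing bijections proving the two identities. The only (harmless) cosmetic differences are that you justify countability of the fibres of $P$ via countable stabilizers rather than via $P_{y}\subseteq Y_{y}$, and in part (1) you leave implicit the easy verification that when $y\in\mathrm{Rng}\left(\tau\right)$ the unique candidate $g=\sigma\left(\tau^{-1}\left(y\right)\right)^{-1}$ indeed lies in $Y_{y}$ and contributes $1$.
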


\begin{proof}[Proof of Lemma~\ref{lem:lusinnovi}]
For (1), given some \(\tau\in\bigl[\!\bigl[E_{G}^{Y}\bigr]\!\bigr]\), consider the Borel set
\[P\coloneqq\left\{ \left(g,y\right)\in G\times\mathrm{Dom}\left(\tau\right):g.y=\tau\left(y\right)\right\} \subseteq G\times Y.\]
Since \(\mathrm{Graph}\left(\tau\right)\subseteq E_{G}^{Y}\), for every \(y\in\mathrm{Dom}\left(\tau\right)\) the corresponding \(y\)-fiber of \(P\) satisfies
\[P_{y}=\left\{ g\in G:g.y=\tau\left(y\right)\right\} \subseteq Y_{y},\]
and hence is countable and nonempty. By the Lusin--Novikov theorem~\ref{thm:lusinnovi},\footnote{Here we only use the first part of~\cite[Theorem (18.10)]{kechris2012classical}.} there is a Borel map \(\lambda:\mathrm{Dom}\left(\tau\right)\to G\) such that
\(\tau\left(y\right)=\lambda\left(y\right).y\) for all \(y\in\mathrm{Dom}\left(\tau\right)\). Define then the Borel function
\[f:G\times Y\longrightarrow\left[0,+\infty\right],\quad f\left(g,y\right)\coloneqq\mathbf{1}_{\mathrm{Dom}\left(\tau\right)}\left(y\right)\cdot\mathbf{1}_{\left\{g=\lambda\left(y\right)\right\}}.\]
Let us show that \(f\) is the desired function. First, for every \(y\in Y\) we have
\[f_{Y}\left(y\right)=\sum\nolimits_{g\in Y_{y}}f\left(g,y\right)=\mathbf{1}_{\mathrm{Dom}\left(\tau\right)}\left(y\right)\cdot\sum\nolimits_{g\in Y_{y}}\mathbf{1}_{\left\{g=\lambda\left(y\right)\right\}}=\mathbf{1}_{\mathrm{Dom}\left(\tau\right)}\left(y\right).\]
Second, for every \(y\in Y\) we claim that
\[f_{X}\left(y\right)=\sum\nolimits_{g\in Y_{y}}f\left(g^{-1},g.y\right)=\sum\nolimits_{g\in Y_{y}}\mathbf{1}_{\mathrm{Dom}\left(\tau\right)}\left(g.y\right)\cdot\mathbf{1}_{\left\{g^{-1}=\lambda\left(g.y\right)\right\}}=\mathbf{1}_{\mathrm{Rng}\left(\tau\right)}\left(y\right).\]
Indeed, if \(y\notin\mathrm{Rng}\left(\tau\right)\) then \(\mathbf{1}_{\mathrm{Dom}\left(\tau\right)}\left(g.y\right)\cdot\mathbf{1}_{\left\{ g^{-1}=\lambda\left(g.y\right)\right\} }=0\) for every \(g\in Y_{y}\), for otherwise we would have \(y=g^{-1}g.y=\lambda\left(g.y\right)g.y=\tau\left(g.y\right)\in\mathrm{Rng}\left(\tau\right)\). Now if \(y\in\mathrm{Rng}\left(\tau\right)\) then \(y=\tau\left(y^{\prime}\right)=\lambda\left(y^{\prime}\right).y^{\prime}\) for some \(y^{\prime}\in\mathrm{Dom}\left(\tau\right)\), so \(g\coloneqq \lambda\left(y^{\prime}\right)^{-1}\) gives \(\mathbf{1}_{\mathrm{Dom}\left(\tau\right)}\left(g.y\right)\cdot\mathbf{1}_{\left\{g^{-1}=\lambda\left(g.y\right)\right\}}=1\); then whenever \(g\in Y_{y}\) satisfies \(\mathbf{1}_{\mathrm{Dom}\left(\tau\right)}\left(g.y\right)\cdot\mathbf{1}_{\left\{g^{-1}=\lambda\left(g.y\right)\right\}}=1\) we have \(\tau\left(g.y\right)=\lambda\left(g.y\right)g.y=g^{-1}g.y=y\), so two such \(g,g^{\prime}\) satisfy \(\tau\left(g.y\right)=\tau\left(g^{\prime}.y\right)\), hence \(g.y=g^{\prime}.y\) (as \(\tau\) is injective). Then \(g^{-1}=\lambda\left(g.y\right)=\lambda\left(g^{\prime}.y\right)=g^{\prime-1}\), hence \(g=g^{\prime}\).

For (2), consider the Borel set
\[Q\coloneqq\left\{ \left(g,y,y^{\prime}\right)\in G\times E_{G}^{Y}:g.y=y^{\prime}\right\} \subseteq G\times E_{G}^{Y}.\]
For every \(\left(y,y^{\prime}\right)\in E_{G}^{Y}\) the corresponding \(\left(y,y^{\prime}\right)\)-fiber of \(Q\) satisfies
\[Q_{\left(y,y^{\prime}\right)}=\left\{ g\in G:g.y=y^{\prime}\right\} \subseteq Y_{y},\]
and hence is countable, and it is nonempty (since already \(\left(y,y^{\prime}\right)\in E_{G}^{Y}\)). It follows from the Lusin--Novikov theorem~\ref{thm:lusinnovi} that there exist Borel maps \(\lambda_{n}:E_{G}^{Y}\to G\), \(n\in\mathbb{N}\), such that \(Q_{\left(y,y^{\prime}\right)}=\left\{ \lambda_{n}\left(y,y^{\prime}\right):n\in\mathbb{N}\right\}\) for every \(\left(y,y^{\prime}\right)\in E_{G}^{Y}\). Define Borel indicators \(\kappa_{n}:E_{G}^{Y}\to\left\{ 0,1\right\}\), \(n\in\mathbb{N}\), by letting \(\kappa_{1}\equiv 1\) and inductively
\[\kappa_{n+1}\left(y,y^{\prime}\right)\coloneqq\mathbf{1}_{\left\{ \lambda_{n+1}\left(y,y^{\prime}\right)\notin\left\{ \lambda_{1}\left(y,y^{\prime}\right),\dotsc,\lambda_{n}\left(y,y^{\prime}\right)\right\} \right\} },\quad\left(y,y^{\prime}\right)\in E_{G}^{Y}.\]
Now given some Borel function \(f:G\times Y\to\left[0,+\infty\right]\), define the Borel function
\[F:E_{G}^{Y}\longrightarrow\left[0,+\infty\right],\quad F\left(y,y^{\prime}\right)\coloneqq\sum\nolimits_{n\in\mathbb{N}}\kappa_{n}\left(y,y^{\prime}\right)\cdot f\left(\lambda_{n}\left(y,y^{\prime}\right),y\right).\]
Let us show that \(F\) is the desired function. First, by the construction of the \(\kappa_{n}\)'s, for every \(y\in Y\) we have a representation of \(Y_{y}\) as the disjoint union
\[Y_{y}=\bigsqcup\nolimits_{y^{\prime}\in E_{G}^{Y}\left(y\right)}\bigsqcup\nolimits_{\left\{ n\in\mathbb{N}:\kappa_{n}\left(y,y^{\prime}\right)=1\right\} }\left\{ \lambda_{n}\left(y,y^{\prime}\right)\right\},\]
and therefore
\[\overrightarrow{F}\left(y\right)=\sum\nolimits_{y^{\prime}\in E_{G}^{Y}\left(y\right)}\sum\nolimits_{n\in\mathbb{N}}\kappa_{n}\left(y,y^{\prime}\right)\cdot f\left(\lambda_{n}\left(y,y^{\prime}\right),y\right)=\sum\nolimits_{g\in Y_{y}}f\left(g,y\right)=f_{Y}\left(y\right).\]
Second, one can routinely verify that for every \(y\in Y\) we have a bijection
\[Y_{y}\to\bigsqcup\nolimits_{y^{\prime}\in E_{G}^{Y}\left(y\right)}\bigsqcup\nolimits_{\left\{ n\in\mathbb{N}:\kappa_{n}\left(y,y^{\prime}\right)=1\right\} }\left\{ \left(\lambda_{n}\left(y^{\prime},y\right),y^{\prime}\right)\right\}\quad\text{given by}\quad g\mapsto\left(g^{-1},g.y\right),\]
and therefore
\[\overleftarrow{F}\left(y\right)=\sum\nolimits_{n\in\mathbb{N}}\sum\nolimits_{y^{\prime}\in E_{G}^{Y}\left(y\right)}\kappa_{n}\left(y^{\prime},y\right)\cdot f\left(\lambda_{n}\left(y^{\prime},y\right),y^{\prime}\right)=\sum\nolimits_{g\in Y_{y}}f\left(g^{-1},g.y\right)=f_{X}\left(y\right).\qedhere\]
\end{proof}

\begin{proof}[Proof of Theorem~\ref{thm:relativemecke}]
For the inclusion \(\mathcal{M}_{\mathrm{Mecke}}^{G}\left(Y\right)\subseteq\mathcal{M}^{E_{G}^{Y}}\left(Y\right)\), let \(\nu\in\mathcal{M}_{\mathrm{Mecke}}^{G}\left(Y\right)\), and for an arbitrary element \(\tau\in\bigl[\!\bigl[E_{G}^{Y}\bigr]\!\bigr]\), use Lemma~\ref{lem:lusinnovi}(1) to pick a Borel function \(f:G\times Y\to\left[0,+\infty\right]\) solving \(f_{Y}=\mathbf{1}_{\mathrm{Dom}\left(\tau\right)}\) and \(f_{X}=\mathbf{1}_{\mathrm{Rng}\left(\tau\right)}\). Since \(\nu\) satisfies the Mecke equation,
\[\nu\left(\mathrm{Dom}\left(\tau\right)\right)=\nu\left(f_{Y}\right)=\nu\left(f_{X}\right)=\nu\left(\mathrm{Rng}\left(\tau\right)\right).\]
As \(\tau\) is arbitrary, we conclude that \(\nu\in\mathcal{M}^{E_{G}^{Y}}\left(Y\right)\).

For the inclusion \(\mathcal{M}_{\mathrm{Mecke}}^{G}\left(Y\right)\supseteq\mathcal{M}^{E_{G}^{Y}}\left(Y\right)\) we use the mass transport principle as in Fact~\ref{fct:mtp}. Let \(\nu\in\mathcal{M}^{E_{G}^{Y}}\left(Y\right)\). For an arbitrary Borel function \(f:G\times Y\to\left[0,+\infty\right]\), use Lemma~\ref{lem:lusinnovi}(2) to pick a Borel function \(F:E_{G}^{Y}\to\left[0,+\infty\right]\) solving \(\overrightarrow{F}=f_{Y}\) and \(\overleftarrow{F}=f_{X}\). Since \(\nu\) satisfies the mass transport principle,
\[\nu\left(f_{Y}\right)=\nu\big(\overrightarrow{F}\big)=\nu\big(\overleftarrow{F}\big)=\nu\left(f_{X}\right),\]
which is the Mecke equation. As \(f\) is arbitrary, we conclude that \(\nu\in\mathcal{M}_{\mathrm{Mecke}}^{G}\left(Y\right)\).
\end{proof}

\section{The transverse correspondence}\label{sct:transverse}

In the following discussion we benefit significantly from Last's account of the classical Palm measure theory~\cite{last2009} (see also~\cite{connes1979},~\cite[Appendix A.1]{anantharaman2000},~\cite[\S2]{avni2010entropy},~\cite[Appendix B]{kyed2015},~\cite[\S4]{Slutsky2017},~\cite[\S1.5]{bjorklund2025int}, which deal with \emph{separated} cross sections). We start with the general definition of the transverse measure.

\begin{defn}\label{dfn:transdef}
Let \(X\) be a Borel \(G\)-space with a cross section \(Y\). For every \(\mu\in\mathcal{M}^{G}\left(X\right)\), the associated {\bf transverse measure} \(\mu_{Y}\in\mathcal{M}\left(Y\right)\) is defined by
\[\mu_{Y}\left(f\right)\coloneqq\int_{X}\sum\nolimits_{g\in Y_{x}}f\left(g.x\right)w\left(g\right)d\mu\left(x\right),\]
for every Borel function \(f:Y\to\left[0,+\infty\right]\), for some Borel function \(w:G\to\left[0,+\infty\right]\) with \(m_{G}\left(w\right)=1\).
\end{defn}

It turns out that \(\mu_{Y}\) is independent of the choice of \(w\). This is seen through the fundamental property called in point processes \emph{Campbell theorem} (or \emph{refined Campbell theorem}; see~\cite{last2009}), and we keep that name. In order to formulate it, recall the \(X\){\bf-periodization} of a Borel function \(f:G\times Y\to\left[0,+\infty\right]\), which is the Borel function
\[f_{X}:X\longrightarrow\left[0,+\infty\right],\quad f_{X}\left(x\right)\coloneqq \sum\nolimits_{g\in Y_{x}}f\left(g^{-1},g.x\right).\]

\begin{prop}[{\bf Campbell theorem}]\label{prop:campbell}
For every \(\mu\in\mathcal{M}^{G}\left(X\right)\), the transverse measure \(\mu_{Y}\) is \(\sigma\)-finite and satisfies
\[m_{G}\otimes\mu_{Y}\left(f\right)=\mu\left(f_{X}\right)\]
for every Borel function \(f:G\times Y\to\left[0,+\infty\right]\). In particular, \(\mu_{Y}\) is independent of the choice of \(w\).
\end{prop}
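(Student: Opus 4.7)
The plan is to establish the Campbell identity $(m_G \otimes \mu_Y)(f) = \mu(f_X)$ by a direct computation; the $\sigma$-finiteness of $\mu_Y$ and its independence of the auxiliary function $w$ will then follow easily.

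Starting from the definition of $\mu_Y$ and applying Tonelli (legitimate since all integrands are nonnegative), the left-hand side equals
\[
\int_X \sum_{h \in Y_x} w(h) \int_G f(g, h.x) \, dm_G(g) \, d\mu(x).
\]
Unimodularity of $G$ (right-invariance of $m_G$) lets me substitute $g \mapsto g h^{-1}$ in the inner integral, after which I swap the order of integration to bring the $g$-integral outside. For each fixed $g$, the $G$-invariance of $\mu$ applied via $x \mapsto g^{-1}.x$, together with the identity $Y_{g^{-1}.x} = Y_x g$ --- which permits the reindexing $h = h' g$ in the sum --- transforms the inner integrand into $w(h' g) f(h'^{-1}, h'.x)$. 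A final Tonelli swap and the left-invariance of $m_G$, giving $\int_G w(h' g) \, dm_G(g) = m_G(w) = 1$, collapse the $g$-integral to yield $\int_X \sum_{h' \in Y_x} f(h'^{-1}, h'.x) \, d\mu(x) = \mu(f_X)$. Measurability of all the expressions involved is ensured by the Lusin--Novikov theorem~\ref{thm:lusinnovi}, which provides Borel enumerations of the fibers $Y_x$.

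With the Campbell identity established, specializing $f = \phi \otimes \psi$ for any Borel $\phi: G \to [0, +\infty]$ with $m_G(\phi) = 1$ and Borel $\psi: Y \to [0, +\infty]$ gives $\mu_Y(\psi) = \mu((\phi \otimes \psi)_X)$, a formula that does not involve $w$; this yields the asserted $w$-independence. For $\sigma$-finiteness, I would choose $w$ with compact support $K \subseteq G$ and decompose $X = \bigsqcup_{n, m} X_{n, m}$ into Borel pieces with $\mu(X_{n, m}) < +\infty$ and $|Y_x \cap K| \leq m$ on each $X_{n, m}$ (possible since $\mu$ is $\sigma$-finite and each $Y_x$ is locally finite); restricting $\mu$ to each $X_{n, m}$ produces a finite contribution $\nu_{n, m}$ to $\mu_Y$, and a standard Radon--Nikodym argument constructing a finite measure equivalent to $\mu_Y = \sum_{n, m} \nu_{n, m}$ then upgrades this to $\sigma$-finiteness.

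The main subtlety lies in the bookkeeping of the Campbell computation: the substitutions $g \mapsto g h^{-1}$ (depending on $h$) and $x \mapsto g^{-1}.x$ (depending on $g$) must be coordinated with the reindexing $h = h' g$ of a summation whose range $Y_x$ itself depends on $x$, relying crucially on the equivariance identity $Y_{g^{-1}.x} = Y_x g$. Once this is tracked carefully, the rest of the proof is formal.
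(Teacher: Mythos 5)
Your computation of the Campbell identity is essentially the paper's own argument: the same chain of substitutions ($g\mapsto gh^{-1}$ using right-invariance of $m_G$, then $x\mapsto g^{-1}.x$ using $G$-invariance of $\mu$ together with $Y_{g^{-1}.x}=Y_{x}g$ and the reindexing $h=h'g$, then integrating out $w$ by unimodularity), and the same deduction of $w$-independence. One presentational caveat: you invoke Tonelli for $m_G\otimes\mu_Y$ before $\mu_Y$ is known to be $\sigma$-finite, which is circular as stated; the paper avoids this by working throughout with the iterated integral $J_f=\int_Y\bigl(\int_G f\,dm_G\bigr)d\mu_Y$ (whose manipulations only require Tonelli for $m_G\otimes\mu$) and identifying $J_f$ with $(m_G\otimes\mu_Y)(f)$ only after $\sigma$-finiteness is in hand. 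This is fixable by reordering.

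The genuine gap is in your $\sigma$-finiteness argument. Your decomposition $X=\bigsqcup_{n,m}X_{n,m}$ exhibits $\mu_Y$ as a countable sum $\sum_{n,m}\nu_{n,m}$ of \emph{finite} measures, i.e.\ it shows $\mu_Y$ is \emph{s-finite} --- but s-finite does not imply $\sigma$-finite. The standard counterexample is $\sum_{k}\delta_{y_0}=\infty\cdot\delta_{y_0}$: a countable sum of finite measures, equivalent to the finite measure $\delta_{y_0}$, yet not $\sigma$-finite. This also shows that ``constructing a finite measure equivalent to $\mu_Y$'' proves nothing, and the Radon--Nikodym theorem cannot be used to upgrade the conclusion, since its standard form presupposes $\sigma$-finiteness of the measure being differentiated. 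Note also that your partition lives on $X$, whereas $\sigma$-finiteness requires a partition of $Y$ into sets of finite $\mu_Y$-measure, and $\mu_Y(Y\cap X_{n,m})$ receives contributions from \emph{all} of $X$, not just from $X_{n,m}$. The paper closes this gap by a different device: it applies the already-established identity $J_f=\mu(f_X)$ to the test function $f(g,y)=\theta(g.y)\mathbf{1}_K(g)$, where $K$ is a relatively compact identity neighborhood and $\theta:X\to(0,+\infty)$ is chosen (using $\sigma$-finiteness of $\mu$ and local finiteness of $Y_x$) so that $\int_X\theta(x)\,\lvert Y_x\cap K\rvert\,d\mu(x)<+\infty$. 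This produces the strictly positive, $\mu_Y$-integrable function $y\mapsto\int_K\theta(g.y)\,dm_G(g)$ on $Y$, and the existence of such a function is exactly the criterion for $\sigma$-finiteness. You would need to replace your last paragraph with an argument of this kind.
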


We now formulate the transverse correspondence, and to this end we recall the set \(\mathcal{M}^{E_{G}^{Y}}\left(Y\right)\) of all \(E_{G}^{Y}\)-invariant Borel \(\sigma\)-finite measures on \(Y\).

\begin{thm}[The transverse correspondence]\label{thm:corresp}
For every Borel \(G\)-space \(X\) with a cross section \(Y\), the transverse measure construction as in Definition~\ref{dfn:transdef} constitutes a canonical bijection
\[\mathcal{M}^{G}\left(X\right)\longrightarrow\mathcal{M}^{E_{G}^{Y}}\left(Y\right),\quad\mu\mapsto\mu_{Y}.\]
\end{thm}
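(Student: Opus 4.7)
My plan is to realize the correspondence as a two-sided inverse pair built from Campbell's theorem (Proposition~\ref{prop:campbell}) and Theorem~\ref{thm:relativemecke}. For the \emph{range}, I would first verify $\mu_{Y}\in\mathcal{M}^{E_{G}^{Y}}(Y)$ for every $\mu\in\mathcal{M}^{G}(X)$. The $\sigma$-finiteness is part of Campbell. For $E_{G}^{Y}$-invariance, Theorem~\ref{thm:relativemecke} reduces the task to checking the Mecke equation $\mu_{Y}(g_{X})=\mu_{Y}(g_{Y})$ for every Borel $g:G\times Y\to[0,+\infty]$. I would compute $\mu_{Y}(g_{X})$ by applying Campbell to $F(h,y):=w(h)\,g_{X}(y)$ for a probability weight $w$, reorganizing $g_{X}(h.x)$ via the identity $Y_{h.x}=Y_{x}h^{-1}$ into a double sum indexed by $Y_{x}\times Y_{x}$, and then recognizing that double sum as the $X$-periodization of $f(h,y):=\sum_{k\in Y_{y}}w(hk^{-1})\,g(k,y)$. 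Applying Campbell in reverse, the inner Haar integral $\int_{G}w(hk^{-1})\,dm_{G}(h)$ collapses to $m_{G}(w)=1$ by unimodularity (substitute $h\mapsto hk^{-1}$), leaving exactly $\mu_{Y}(g_{Y})$.

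For \emph{injectivity}, if $(\mu_{1})_{Y}=(\mu_{2})_{Y}$ then Campbell gives $\mu_{1}(f_{X})=\mu_{2}(f_{X})$ for every Borel $f:G\times Y\to[0,+\infty]$, so it suffices to show that every Borel $\phi:X\to[0,+\infty]$ is of the form $f_{X}$. Choosing a strictly positive Borel $w:G\to(0,+\infty)$ with $m_{G}(w)=1$ and support/decay adapted to the cross section so that $w_{Y}(x):=\sum_{g\in Y_{x}}w(g^{-1})$ is Borel, finite, and everywhere positive, the function $f(h,y):=w(h)\,\phi(h.y)/w_{Y}(h.y)$ satisfies $f_{X}(x)=\phi(x)\cdot w_{Y}(x)/w_{Y}(x)=\phi(x)$ by direct computation.

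For \emph{surjectivity}, given $\nu\in\mathcal{M}^{E_{G}^{Y}}(Y)$ I would define $\mu$ on each Borel $\phi:X\to[0,+\infty]$ by $\mu(\phi):=m_{G}\otimes\nu(f_{\phi})$, using the $f_{\phi}$ constructed above. Verifying that $\mu\in\mathcal{M}^{G}(X)$ satisfies $\mu_{Y}=\nu$ splits into four substeps: (a) independence of $\mu(\phi)$ from the choice of $w$; (b) $G$-invariance, which follows from (a) after the substitution $h\mapsto g_{0}^{-1}h$ in the defining Haar integral turns $w$ into a translated weight; (c) $\sigma$-finiteness, inherited from $m_{G}\otimes\nu$; and (d) the recovery $\mu_{Y}=\nu$, a direct Campbell-style computation that uses Mecke for $\nu$ (Theorem~\ref{thm:relativemecke}) to collapse the resulting double integral to $\nu$.

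The main obstacle is substep (a), where the $E_{G}^{Y}$-invariance of $\nu$ is used essentially: the claim is that $m_{G}\otimes\nu$ annihilates every signed Borel $F:G\times Y\to\mathbb{R}$ whose fiber sums over the covering $(h,y)\mapsto h.y$ vanish identically on $X$. I would use Lusin--Novikov, in the spirit of Lemma~\ref{lem:lusinnovi}, to decompose such an $F$ into pieces localized along partial-bijection graphs of the form $(h,y)\mapsto(h\lambda(y)^{-1},\lambda(y).y)$ coming from elements of $\bigl[\!\bigl[E_{G}^{Y}\bigr]\!\bigr]$, and then combine the unimodularity of $m_{G}$ with the fact that $\nu$ is preserved by each such partial bijection (by its $E_{G}^{Y}$-invariance) to conclude that each piece integrates to zero.
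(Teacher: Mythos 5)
Your overall architecture coincides with the paper's: the range is identified via the Mecke equation and Theorem~\ref{thm:relativemecke} (your computation for $\mu_{Y}(g_{X})=\mu_{Y}(g_{Y})$ is essentially Proposition~\ref{prop:mecke}), injectivity is reduced to writing every Borel $\phi:X\to[0,+\infty]$ as an $X$-periodization, and surjectivity proceeds through an explicit inverse map whose well-definedness, invariance and Campbell identity are checked separately. However, there is one genuine gap, and it sits at the load-bearing point of both your injectivity and surjectivity arguments: the construction of $f$ with $f_{X}=\phi$ via $f(h,y)=w(h)\,\phi(h.y)/w_{Y}(h.y)$, where $w_{Y}(x)=\sum_{g\in Y_{x}}w(g^{-1})$. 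For this to work you need $0<w_{Y}(x)<+\infty$ for every relevant $x$, and in general no single Borel weight $w:G\to(0,+\infty)$ with $m_{G}(w)=1$ achieves this. Already for $X=G^{\ast}$ with $Y=G_{o}^{\ast}$ one has $Y_{\omega}=\omega$, so $w_{Y}(\omega)=\sum_{t\in\omega}w(t^{-1})$ ranges over all possible ``weighted counts'' of locally finite configurations: if $w$ is strictly positive then there are locally finite $\omega$ (indeed, full orbits of such $\omega$, carrying $\sigma$-finite invariant measures) with $w_{Y}(\omega)=+\infty$, while if $w$ vanishes outside a compact set then $w_{Y}(\omega)=0$ for configurations avoiding that set. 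Since the theorem quantifies over \emph{all} $\sigma$-finite invariant $\mu$ (and, for injectivity, over two unknown measures simultaneously), you cannot discard the bad $x$'s as a null set, and ``decay adapted to the cross section'' cannot be adapted to each $Y_{x}$ individually.

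What you are really trying to build is a Borel partition of unity $\rho(g,x)$ with $\sum_{g\in Y_{x}}\rho(g,x)=1$ for all $x$ (your candidate being $\rho(g,x)=w(g^{-1})/w_{Y}(x)$), and the paper's fix is exactly this: Lemmas~\ref{lem:injcov} and~\ref{lem:borpart} construct such a $\rho$ from an \emph{injective cover}, using only the local finiteness of each $Y_{x}$ and a countable base of $G$ --- the resulting $\rho$ is $\{0,1\}$-valued and selects, for each $x$, a single $g\in Y_{x}$, so no summability issue arises. Once you replace $w(h)/w_{Y}(h.y)$ by such a $\rho$, your injectivity step becomes Proposition~\ref{prop:injcorres}, your inverse map becomes the paper's $\nu\mapsto\nu^{X}$ from \eqref{eq:invcorr}, and your substeps (a)--(d) become Claims~\ref{clm:campbcorr} and the $G$-invariance claim; in particular your substep (a) (independence of the choice) can then be obtained more cheaply from the uniqueness statement of Proposition~\ref{prop:injcorres} rather than by decomposing signed kernels along partial bijections. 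The rest of your outline is sound.
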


Note that the transverse correspondence depends on a specific normalization of \(G\)'s Haar measure.

\subsection{Proof of Campbell theorem}

In the following proof we follow the lines of~\cite[Theorem 3.6]{last2009}.

\begin{proof}[Proof of Proposition~\ref{prop:campbell}]
Let \(f:G\times Y\to\left[0,+\infty\right]\) be an arbitrary Borel function, and define
\[J_{f}\coloneqq\int_{Y}\Big(\int_{G}f\left(g,y\right)dm_{G}\left(g\right)\Big)d\mu_{Y}\left(y\right).\]
By the definition of \(\mu_{Y}\) and Fubini's theorem for \(m_{G}\otimes\mu\) (but not with \(\mu_{Y}\), which has not yet been proved to be \(\sigma\)-finite), we get that
\begin{align*}
J_{f}
&=\iint\nolimits_{G\times X}\sum\nolimits_{h\in Y_{x}}f\left(g,h.x\right)w\left(h\right)dm_{G}\left(g\right)d\mu\left(x\right)\\
&=\iint\nolimits_{G\times X}\sum\nolimits_{h\in Y_{x}}f\left(g^{-1}h^{-1},h.x\right)w\left(h\right)dm_{G}\left(g\right)d\mu\left(x\right)\\
&=\iint\nolimits_{G\times X}\sum\nolimits_{h\in Y_{g^{-1}.x}}f\left(h^{-1},hg^{-1}.x\right)w\left(hg^{-1}\right)dm_{G}\left(g\right)d\mu\left(x\right),
\end{align*}
where in the last equality we substitute \(h\mapsto hg^{-1}\). Then using Fubini's theorem twice, the \(G\)-invariance of \(\mu\), and that \(\int_{G}w\left(hg^{-1}\right)dm_{G}\left(g\right)=1\) for every \(h\in G\) by unimodularity, we obtain the identity
\begin{equation}\label{eq:mididentity}
\begin{aligned}
J_{f}
&=\iint\nolimits_{G\times X}\sum\nolimits_{h\in Y_{x}}f\left(h^{-1},h.x\right)w\left(hg^{-1}\right)dm_{G}\left(g\right)d\mu\left(x\right)\\
&=\int\nolimits_{X}\sum\nolimits_{h\in Y_{x}}f\left(h^{-1},hx\right)d\mu\left(x\right)=\mu\left(f_{X}\right).
\end{aligned}
\end{equation}
In order to complete the proof we need to justify that \(J_{f}=m_{G}\otimes\mu_{Y}\left(f\right)\), which will follow directly from the definition of \(J_{f}\) by Fubini's theorem once we prove that \(\mu_{Y}\) is \(\sigma\)-finite. Therefore, for the rest of the proof we exploit the identity \eqref{eq:mididentity} to show that \(\mu_{Y}\) is \(\sigma\)-finite.

Pick some relatively compact symmetric identity neighborhood \(K\subset G\). We first note that necessarily \(\mu\left(x\in X:\left|Y_{x}\cap K\right|\geq1\right)>0\); indeed, otherwise, since \(\mu\) is \(G\)-invariant, then for every \(g\in G\) we have
\[0=\mu\left(x\in X:\left|Y_{x}\cap K\right|\geq1\right)=\mu\left(x\in X:\left|Y_{g.x}\cap K\right|\geq1\right)=\mu\left(x\in X:\left|Y_{x}\cap Kg\right|\geq1\right),\]
while \(G\) can be covered using countably many right-translations of \(K\) (by the Lindel\"{o}f property), so this is impossible. We also note that \(\left|Y_{x}\cap K\right|<+\infty\) for every \(x\in X\), since \(Y_{x}\) is locally finite, and therefore, using the \(\sigma\)-finiteness of \(\mu\), there is a Borel function \(\theta:X\to\left(0,+\infty\right)\) such that
\[0<\int_{X}\theta\left(x\right)\cdot\left|Y_{x}\cap K\right|d\mu\left(x\right)<+\infty.\]
Define the Borel function
\[f:G\times Y\to\left[0,+\infty\right),\quad f\left(g,y\right)\coloneqq\theta\left(g.y\right)\cdot\mathbf{1}_{K}\left(g\right),\]
and note that its \(X\)-periodization is
\[f_{X}\left(x\right)=\sum\nolimits_{g\in Y_{x}}f\left(g^{-1},g.x\right)=\sum\nolimits_{g\in Y_{x}}\theta\left(x\right)\cdot\mathbf{1}_{K}\left(g^{-1}\right)=\theta\left(x\right)\cdot\left|Y_{x}\cap K^{-1}\right|=\theta\left(x\right)\cdot\left|Y_{x}\cap K\right|.\]
It follows from the identity \eqref{eq:mididentity} and the choice of \(K\) and \(\theta\) that
\[\int_{Y}\Big(\int_{K}\theta\left(g.y\right)dm_{G}\left(g\right)\Big)d\mu_{Y}\left(y\right)=J_{f}=\mu\left(f_{X}\right)=\int_{X}\theta\left(x\right)\cdot\left|Y_{x}\cap K\right|d\mu\left(x\right)<+\infty.\]
Therefore, the Borel function \(Y\to\left(0,+\infty\right)\), \(y\mapsto\int_{K}\theta\left(g.y\right)dm_{G}\left(g\right)\), is \(\mu_{Y}\)-integrable while taking values in \(\left(0,+\infty\right)\) (since \(\theta\) is strictly positive and \(m_{G}\left(K\right)>0\)), and thus \(\mu_{Y}\) is \(\sigma\)-finite.
\end{proof}

We now deduce that the range of the transverse correspondence~\ref{thm:corresp} is indeed \(\mathcal{M}^{E_{G}^{Y}}\left(Y\right)\). To this end we argue similarly to~\cite[Theorem 3.15]{last2009} and exploit Theorem~\ref{thm:relativemecke}.

\begin{prop}\label{prop:mecke}
For every \(\mu\in\mathcal{M}^{G}\left(X\right)\), we have \(\mu_{Y}\in \mathcal{M}^{E_{G}^{Y}}\left(Y\right)\).
\end{prop}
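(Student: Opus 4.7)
My plan is to verify the defining condition of $\mathcal{M}^{E_G^Y}(Y)$ directly, using Campbell's theorem together with a uniformization step borrowed from the proof of Lemma~\ref{lem:lusinnovi}(1). Fix $\tau\in\bigl[\!\bigl[E_G^Y\bigr]\!\bigr]$; the target is $\mu_Y(\mathrm{Dom}(\tau))=\mu_Y(\mathrm{Rng}(\tau))$. By Lusin--Novikov there is a Borel map $\lambda:\mathrm{Dom}(\tau)\to G$ with $\tau(y)=\lambda(y).y$, which I extend arbitrarily to a Borel map $\lambda:Y\to G$.

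The key construction is a single cleverly chosen test function. Fix any Borel $w:G\to[0,+\infty]$ with $m_G(w)=1$, and apply Campbell's theorem (Proposition~\ref{prop:campbell}) to
\[f:G\times Y\longrightarrow[0,+\infty],\qquad f(h,y)\coloneqq\mathbf{1}_{\mathrm{Dom}(\tau)}(y)\cdot w\bigl(\lambda(y)h^{-1}\bigr).\]
On the $m_G\otimes\mu_Y$ side, the substitution $h\mapsto\lambda(y)h^{-1}$ together with unimodularity gives $\int_G w(\lambda(y)h^{-1})\,dm_G(h)=m_G(w)=1$ for every $y\in\mathrm{Dom}(\tau)$, hence $m_G\otimes\mu_Y(f)=\mu_Y(\mathrm{Dom}(\tau))$. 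On the $\mu$ side, the $X$-periodization unfolds as $f_X(x)=\sum_{g\in Y_x}\mathbf{1}_{\mathrm{Dom}(\tau)}(g.x)\,w(\lambda(g.x)g)$; I then reparametrize via $g\mapsto\tilde{g}\coloneqq\lambda(g.x)g$. Since $\tilde{g}.x=\lambda(g.x).(g.x)=\tau(g.x)$, this map is a Borel bijection from $\{g\in Y_x:g.x\in\mathrm{Dom}(\tau)\}$ onto $\{\tilde{g}\in Y_x:\tilde{g}.x\in\mathrm{Rng}(\tau)\}$, rewriting the sum as $\sum_{\tilde{g}\in Y_x}\mathbf{1}_{\mathrm{Rng}(\tau)}(\tilde{g}.x)\,w(\tilde{g})$; by Definition~\ref{dfn:transdef}, this gives $\mu(f_X)=\mu_Y(\mathrm{Rng}(\tau))$. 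Campbell's identity $m_G\otimes\mu_Y(f)=\mu(f_X)$ now yields $\mu_Y(\mathrm{Dom}(\tau))=\mu_Y(\mathrm{Rng}(\tau))$, and since $\tau$ was arbitrary, $\mu_Y\in\mathcal{M}^{E_G^Y}(Y)$. In view of Theorem~\ref{thm:relativemecke} this is equivalent to saying that $\mu_Y$ satisfies the Mecke equation.

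The step I expect to need the most care with is the reparametrization $g\mapsto\tilde{g}$, because nontrivial (countable) stabilizers of the $G$-action are permitted here. This is easily handled: if $g'=gs$ with $s\in\mathrm{Stab}(x)$ then $\tilde{g}'=\lambda(g.x)gs=\tilde{g}s$, so the correspondence is compatible with the stabilizer fibers, and the injectivity of $\tau:\mathrm{Dom}(\tau)\to\mathrm{Rng}(\tau)$ forces $g\mapsto\tilde{g}$ to be injective on $Y_x$ itself, with inverse $\tilde{g}\mapsto\lambda\bigl(\tau^{-1}(\tilde{g}.x)\bigr)^{-1}\tilde{g}$.
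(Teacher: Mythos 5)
Your argument is correct, and it takes a genuinely different route from the paper's. The paper first proves that \(\mu_{Y}\) satisfies the full Mecke equation \(\mu_{Y}\left(f_{X}\right)=\mu_{Y}\left(f_{Y}\right)\) for \emph{every} Borel \(f:G\times Y\to\left[0,+\infty\right]\) --- via Campbell's theorem~\ref{prop:campbell} applied to the auxiliary function \(F\left(h,y\right)=\sum_{g\in Y_{y}}f\left(g^{-1},g.y\right)w\left(gh^{-1}\right)\) and a substitution inside a double sum over \(Y_{x}\times Y_{x}\) --- and then invokes the equivalence \(\mathcal{M}_{\mathrm{Mecke}}^{G}\left(Y\right)=\mathcal{M}^{E_{G}^{Y}}\left(Y\right)\) of Theorem~\ref{thm:relativemecke} (whose relevant direction rests on Lemma~\ref{lem:lusinnovi}(1)). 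You instead verify the defining condition of \(\mathcal{M}^{E_{G}^{Y}}\left(Y\right)\) directly: for each \(\tau\in\bigl[\!\bigl[E_{G}^{Y}\bigr]\!\bigr]\) you build one tailored test function by smearing the Lusin--Novikov section \(\lambda\) of \(\tau\) against the normalizing weight \(w\), and a single application of Campbell's theorem plus the reparametrization \(g\mapsto\lambda\left(g.x\right)g\) (whose bijectivity on \(Y_{x}\) you correctly reduce to the injectivity of \(\tau\), stabilizers notwithstanding) yields \(\mu_{Y}\left(\mathrm{Dom}\left(\tau\right)\right)=\mu_{Y}\left(\mathrm{Rng}\left(\tau\right)\right)\). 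In effect you fuse the paper's Mecke computation with its Lemma~\ref{lem:lusinnovi}(1) into one step, bypassing the \(Y\)-periodization and the mass transport principle (Fact~\ref{fct:mtp}) entirely; this is more self-contained and arguably more elementary for the proposition as stated. What the paper's route buys in exchange is the Mecke equation itself as an explicit intermediate conclusion --- the substance of Last's conjecture and a statement the authors want on record for all test functions, not just the indicator-type ones attached to partial transformations. Your closing remark that the Mecke property then follows from Theorem~\ref{thm:relativemecke} is the right way to recover that byproduct.
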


\begin{proof}
By Theorem~\ref{thm:relativemecke} it suffices to show that \(\mu_{Y}\in \mathcal{M}_{\mathrm{Mecke}}^{G}\left(Y\right)\). Let \(f:G\times Y\to\left[0,+\infty\right]\) be a Borel function. Since \(\int_{G}w\left(hg^{-1}\right)dm_{G}\left(h\right)=1\) for every \(g\in G\) by unimodularity, we can write
\[\mu_{Y}\left(f_{X}\right)=\iint\nolimits_{G\times Y}\sum\nolimits_{g\in Y_{y}}f\left(g^{-1},g.y\right)w\left(gh^{-1}\right)dm_{G}\left(h\right)d\mu_{Y}\left(y\right)\coloneqq m_{G}\otimes\mu_{Y}\left(F\right),\]
where \(F\left(h,y\right)\coloneqq \sum\nolimits_{g\in Y_{y}}f\left(g^{-1},g.y\right)w\left(gh^{-1}\right)\). For every \(x\in X\) and \(h\in Y_{x}\) we have
\[F\left(h^{-1},h.x\right)=\sum\nolimits_{gh\in Y_{x}}f\left(g^{-1},gh.x\right)w\left(gh\right)=\sum\nolimits_{g\in Y_{x}}f\left(hg^{-1},g.x\right)w\left(g\right),\]
so that the \(X\)-periodization of \(F\) becomes
\[F_{X}\left(x\right)=\sum\nolimits_{g\in Y_{x}}\Big(\sum\nolimits_{h\in Y_{x}}f\left(hg^{-1},g.x\right)\Big)w\left(g\right)=\sum\nolimits_{g\in Y_{x}}f_{Y}\left(g.x\right)w\left(g\right),\]
where in the second equality we substitute \(h\mapsto hg\) in the inner sum. Then by the definition of \(\mu_{Y}\) we get \(\mu\left(F_{X}\right)=\mu_{Y}\left(f_{Y}\right)\), and using Campbell theorem~\ref{prop:campbell} we obtain the Mecke equation:
\[\mu_{Y}\left(f_{X}\right)=m_{G}\otimes\mu_{Y}\left(F\right)=\mu\left(F_{X}\right)=\mu_{Y}\left(f_{Y}\right).\qedhere\]
\end{proof}

\subsection{Injective covers}

Here we construct \emph{injective covers} for Borel \(G\)-spaces with cross sections. This will be important to show that the transverse correspondence~\ref{thm:corresp} is bijective. For separated cross sections, injective covers were constructed in~\cite[\S3.1]{bjorklund2025int}, and here we extend it to general cross sections.

\begin{defn}
Let \(X\) be a Borel \(G\)-space with a cross section \(Y\) and action map \(\mathrm{a}:G\times X\to X\).
\begin{itemize}
    \item An {\bf injective set} is a Borel set \(C\subseteq G\times Y\) such that \(\mathrm{a}\mid_{C}:C\to\mathrm{a}\left(C\right)\) is an injective map.
    \item An {\bf injective cover} is a countable family \(\mathfrak{C}\) of Borel sets in \(G\times Y\) such that:
    \begin{enumerate}
    \item Each element of \(\mathfrak{C}\) is an injective set.
    \item The elements of \(\mathfrak{C}\) are pairwise disjoint.
    \item \(\left\{\mathrm{a}\left(C\right):C\in\mathfrak{C}\right\}\) forms a (countable) Borel partition of \(X\).
    \end{enumerate}
\end{itemize}
\end{defn}

\begin{lem}\label{lem:injcov}
Every Borel \(G\)-space \(X\) with a cross section \(Y\) admits an injective cover.
\end{lem}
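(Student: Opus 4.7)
The plan is to reduce the statement to producing a single Borel section of the action map $\mathrm{a}\colon G\times Y\to X$ restricted to $G\times Y$, and to extract such a section from the Lusin--Novikov uniformization Theorem~\ref{thm:lusinnovi}. Once a section $\sigma\colon X\to G\times Y$ is in hand, its image will be a Borel injective set mapped bijectively onto $X$, and the singleton family consisting of this image will already satisfy the three requirements of an injective cover.

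First I would verify that the action map has countable fibers. For $x\in X$, a pair $(g,y)\in G\times Y$ lies in $\mathrm{a}^{-1}(x)$ iff $y=g^{-1}.x$ and this point is in $Y$, i.e.\ iff $g^{-1}\in Y_{x}$; hence the fiber is in bijection with the return times set $Y_{x}$, which is locally finite and therefore countable. Combining this with the assumption that $Y$ meets every $G$-orbit, the Borel set
\[P\coloneqq\bigl\{(x,(g,y))\in X\times(G\times Y):g.y=x\bigr\}\]
has countable $x$-fibers and $\mathrm{proj}_{X}(P)=X$. Theorem~\ref{thm:lusinnovi} thus supplies Borel maps $\lambda_{n}\colon X\to G\times Y$ enumerating the fibers $P_{x}$. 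Each $\lambda_{n}$ satisfies $\mathrm{a}\circ\lambda_{n}=\mathrm{id}_{X}$ by the very membership in $P$, and is in particular injective.

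Taking $\sigma\coloneqq\lambda_{1}$ and $B\coloneqq\sigma(X)\subseteq G\times Y$, the Lusin--Souslin theorem (injective Borel images of standard Borel spaces are Borel; see \cite[Theorem (15.1)]{kechris2012classical}) guarantees that $B$ is a Borel subset of $G\times Y$, and $\mathrm{a}|_{B}\colon B\to X$ is a Borel bijection inverse to $\sigma$. Consequently, $B$ is an injective set in the sense of the definition, and $\{\mathrm{a}(B)\}=\{X\}$ is a Borel partition of $X$. The singleton family $\mathfrak{C}=\{B\}$ therefore fulfils conditions (1)--(3): injectivity of $B$ is just established, pairwise disjointness is vacuous, and the image partition is trivial.

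There is no genuine obstacle here: the only content of the lemma is that the local finiteness of the return times sets $Y_{x}$ is exactly the hypothesis needed to invoke Lusin--Novikov on $P$. If a \emph{more refined} injective cover with multiple pieces were desired later (for instance to stratify $X$ by the cardinality or structure of $Y_{x}$), one could replace $\sigma$ by a countable family of Borel selectors defined on the pieces of a suitable Borel partition of $X$ and proceed identically; but the bare existence statement is already witnessed by the singleton above.
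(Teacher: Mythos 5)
Your proof is correct, but it takes a genuinely different route from the paper. The paper never invokes uniformization here: it fixes a countable base \(\{U_{n}\}\) of \(G\), stratifies \(X\) by the first basic open set in which \(Y_{x}\) has exactly one point (possible because \(Y_{x}\) is nonempty and locally finite), and builds one injective piece \(C_{n}=\mathrm{a}^{-1}\left(X_{n}\right)\cap\left(U_{n}^{-1}\times Y\right)\) per stratum --- a purely topological/combinatorial construction yielding a countably-many-piece cover. You instead apply Lusin--Novikov to the (countable-fiber, full-projection) set \(P\), take the first selector \(\sigma=\lambda_{1}\), and obtain a \emph{single} injective set \(B=\sigma\left(X\right)\) with \(\mathrm{a}\left(B\right)=X\); this is a strictly stronger conclusion (a one-piece cover), at the cost of the heavier machinery of Lusin--Novikov plus Lusin--Souslin. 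All the steps check out: the fiber \(P_{x}\) is in bijection with \(Y_{x}\), hence countable; \(\mathrm{proj}_{X}\left(P\right)=X\) because the definition of a cross section in this paper requires \(Y_{x}\neq\emptyset\) for \emph{every} \(x\); and \(\mathrm{a}|_{B}\) is injective because any \(b\in B\) with \(\mathrm{a}\left(b\right)=x\) must equal \(\sigma\left(x\right)\). Your singleton cover also works in every downstream use (the partition of unity of Lemma~\ref{lem:borpart}, the monotone construction of Lemma~\ref{lem:borpartmon}, and Proposition~\ref{prop:injcorres}). One small simplification you could note: Lusin--Souslin is avoidable, since \(B=\left\{\left(g,y\right)\in G\times Y:\sigma\left(\mathrm{a}\left(g,y\right)\right)=\left(g,y\right)\right\}\) exhibits \(B\) directly as a Borel equalizer.
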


\begin{proof}
Fix an enumerated countable base \(\mathcal{U}=\left\{ U_{n}:n\in\mathbb{N}\right\}\) for the topology of \(G\), and set
\[X_{1}\left(U_{n}\right)\coloneqq \{x\in X: \left|Y_{x}\cap U_{n}\right|=1\}.\]
Note that for every \(x\in X\) the set \(Y_{x}\) is locally finite in \(G\), and thus \(X=\bigcup_{n\in\mathbb{N}}X_{1}\left(U_{n}\right)\). Then form the Borel partition
\[X=X_{1}\sqcup X_{2}\sqcup\dotsm,\text{ where }X_{n+1}\coloneqq X_{1}\left(U_{n+1}\right)\setminus\left(X_{1}\left(U_{n}\right)\cup\dotsm\cup X_{1}\left(U_{1}\right)\right).\]
Now define
\[C_{n}\coloneqq \mathrm{a}^{-1}\left(X_{n}\right)\cap\left(U_{n}^{-1}\times Y\right)=\left\{\left(g,y\right)\in G\times Y: g^{-1}\in U_{n},\,g.y\in X_{n}\right\},\qquad n\in\mathbb{N},\]
and we claim that \(C_{n}\) is an injective set satisfying \(\mathrm{a}\left(C_{n}\right)=X_{n}\). To see this, suppose \(\left(g,y\right),\left(g^{\prime},y^{\prime}\right)\in C_{n}\) are such that \(x\coloneqq g.y=g^{\prime}.y^{\prime}\in X_{n}\). Then \(g^{-1},g^{\prime -1}\in Y_{x}\cap U_{n}\), and since \(X_{n}\subseteq X_{1}\left(U_{n}\right)\) this means that \(g^{-1}=g^{\prime -1}\), hence \(g=g^{\prime}\) and \(y=y^{\prime}\), concluding that \(C_{n}\) is an injective set. It is clear that \(\mathrm{a}\left(C_{n}\right)\subseteq X_{n}\), and conversely, if \(x\in X_{n}\) and \(g\in Y_{x}\cap U_{n}\) is the unique element of this set, then \(\left(g^{-1},g.x\right)\in C_{n}\) hence \(x=g^{-1}.g.x\in\mathrm{a}\left(C_{n}\right)\), so that \(X_{n}\subseteq\mathrm{a}\left(C_{n}\right)\). Since \(X_{1},X_{2},X_{3},\dotsc\) are pairwise disjoint so are \(C_{1},C_{2},C_{3},\dotsc\), and thus \(\mathfrak{C}\coloneqq \{C_{1},C_{2},\dotsc\}\) forms an injective cover.
\end{proof}

\begin{lem}\label{lem:borpart}
Every Borel \(G\)-space \(X\) with a cross section \(Y\) admits a {\bf Borel partition of unity}, that is a Borel function
\[\rho:G\times X\longrightarrow\left[0,1\right]\,\text{ satisfying }\,\sum\nolimits_{g\in Y_{x}}\rho\left(g,x\right)=1\text{ for all }x\in X.\]
\end{lem}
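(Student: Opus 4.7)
The plan is to promote the injective cover from Lemma~\ref{lem:injcov} into the required function $\rho$ by selecting, in a Borel way, one distinguished return time per point and putting all the mass on it. Concretely, I would re-use the ingredients already assembled in the proof of Lemma~\ref{lem:injcov}: fix the countable open base $\mathcal{U}=\{U_{n}:n\in\mathbb{N}\}$ of $G$ together with the Borel partition $X=\bigsqcup_{n\in\mathbb{N}}X_{n}$ constructed there, whose defining feature is that $\left|Y_{x}\cap U_{n}\right|=1$ whenever $x\in X_{n}$.

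Next, set
\[\rho:G\times X\longrightarrow\left[0,1\right],\qquad \rho\left(g,x\right)\coloneqq\sum_{n\in\mathbb{N}}\mathbf{1}_{X_{n}}\left(x\right)\cdot\mathbf{1}_{U_{n}}\left(g\right)\cdot\mathbf{1}_{Y}\left(g.x\right).\]
Borel measurability is automatic because each $X_{n}$ is Borel, each $U_{n}$ is open, $Y$ is Borel, the action map $G\times X\to X$ is jointly measurable, and a countable sum of Borel functions is Borel. Since the $X_{n}$ are pairwise disjoint, at most one term is nonzero for any fixed $\left(g,x\right)$, and consequently $\rho$ actually takes values in $\{0,1\}\subset\left[0,1\right]$.

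For the required identity, fix $x\in X$ and let $n$ be the unique index with $x\in X_{n}$. Observing that $\mathbf{1}_{Y}\left(g.x\right)=1$ is precisely equivalent to $g\in Y_{x}$, we compute
\[\sum_{g\in Y_{x}}\rho\left(g,x\right)=\sum_{g\in Y_{x}}\mathbf{1}_{U_{n}}\left(g\right)=\left|Y_{x}\cap U_{n}\right|=1,\]
by the defining property of $X_{n}$. The statement is really a corollary of Lemma~\ref{lem:injcov}: that lemma has already done the genuine work of measurably singling out one return time per cross-section point, and the only remaining task is to repackage this selection as a $\left[0,1\right]$-valued Borel function on $G\times X$. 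Accordingly there is no serious obstacle here, and this approach yields the lemma immediately.
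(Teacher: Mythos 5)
Your proof is correct and is essentially identical to the paper's: the paper sets \(\rho\left(g,x\right)=\sum_{C\in\mathfrak{C}}\mathbf{1}_{C}\left(g^{-1},g.x\right)\) for an injective cover \(\mathfrak{C}\) from Lemma~\ref{lem:injcov}, and for the specific cover \(C_{n}=\left\{\left(g,y\right):g^{-1}\in U_{n},\,g.y\in X_{n}\right\}\) built there this unwinds to exactly your formula \(\sum_{n}\mathbf{1}_{X_{n}}\left(x\right)\mathbf{1}_{U_{n}}\left(g\right)\mathbf{1}_{Y}\left(g.x\right)\). The verification is the same in both cases.
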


\begin{proof}
Fix an injective cover \(\mathfrak{C}\) using Lemma~\ref{lem:injcov}, and define
\[\rho\left(g,x\right)\coloneqq\sum\nolimits_{C\in\mathfrak{C}}\mathbf{1}_{C}\left(g^{-1},g.x\right).\]
By the defining properties of injective covers, for every \(x\in X\) there is a unique \(C\in\mathfrak{C}\) and a unique \(g\in Y_{x}\) such that \(\mathbf{1}_{C}\left(g^{-1},g.x\right)=1\). This readily implies that \(\sum\nolimits_{g\in Y_{x}}\rho\left(g,x\right)=1\).
\end{proof}

The following lemma refines Lemma~\ref{lem:borpart}.

\begin{lem}\label{lem:borpartmon}
Let \(X^{\prime}\) be a Borel \(G\)-space with a cross section \(Y^{\prime}\), and let \(X\subseteq X^{\prime}\) be a Borel \(G\)-subspace (i.e., a \(G\)-invariant Borel subset) with a cross section \(Y\), such that \(Y\subseteq Y^{\prime}\). Then there are Borel partitions of unity, \(\rho:G\times X\to\left[0,1\right]\) for \(X\) with respect to \(Y\) and \(\rho^{\prime}:G\times X^{\prime}\to\left[0,1\right]\) for \(X^{\prime}\) with respect to \(Y^{\prime}\), such that
\[\rho\left(g,x\right)=\rho^{\prime}\left(g,x\right)\text{ for all }\left(g,x\right)\in G\times X.\]
\end{lem}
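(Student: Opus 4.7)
The plan is to build a single injective cover of $X'$ (with respect to $Y'$) that extends a chosen injective cover of $X$ (with respect to $Y$), and then define $\rho$ and $\rho'$ via the formula $\sum_{C} \mathbf{1}_C(g^{-1}, g.x)$ used in the proof of Lemma~\ref{lem:borpart}. Compatibility will then follow automatically because the members of the extended cover whose images lie outside $X$ contribute nothing to $\rho'(g,x)$ when $x\in X$.

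First I would apply Lemma~\ref{lem:injcov} to $X$ with cross section $Y$ to obtain an injective cover $\mathfrak{C}=\{C_n\}_n$ with each $C_n\subseteq G\times Y\subseteq G\times Y'$. Each $C_n$ remains an injective set with respect to the action of $G$ on $X'$ (the action map simply restricts), and $\bigsqcup_n\mathrm{a}(C_n)=X$. Next I would view $X'\setminus X$ as a $G$-invariant Borel subspace of $X'$ and observe that $Y'\setminus X$ is a cross section of $X'\setminus X$: for $x'\in X'\setminus X$ the whole orbit of $x'$ stays in $X'\setminus X$, so $\{g\in G:g.x'\in Y'\setminus X\}$ coincides with $\{g\in G:g.x'\in Y'\}$, which is locally finite and nonempty because $Y'$ is a cross section of $X'$. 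Applying Lemma~\ref{lem:injcov} to $(X'\setminus X,\,Y'\setminus X)$ yields an injective cover $\mathfrak{C}'=\{C'_m\}_m$ with $C'_m\subseteq G\times(Y'\setminus X)\subseteq G\times Y'$ and $\bigsqcup_m\mathrm{a}(C'_m)=X'\setminus X$ (taking $\mathfrak{C}'=\emptyset$ if $X'=X$).

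To combine them, I would verify that $\mathfrak{C}\sqcup\mathfrak{C}'$ is an injective cover of $X'$ relative to $Y'$: pairwise disjointness across the two families holds because the second coordinates of elements of $\mathfrak{C}$ lie in $Y\subseteq X$ while those of $\mathfrak{C}'$ lie in $Y'\setminus X$, and the $\mathrm{a}$-images partition $X'=X\sqcup(X'\setminus X)$. Then I set $\rho(g,x)\coloneqq\sum_{C\in\mathfrak{C}}\mathbf{1}_C(g^{-1},g.x)$ on $G\times X$ and $\rho'(g,x')\coloneqq\sum_{C\in\mathfrak{C}\sqcup\mathfrak{C}'}\mathbf{1}_C(g^{-1},g.x')$ on $G\times X'$; both are Borel partitions of unity by the proof of Lemma~\ref{lem:borpart}. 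The compatibility $\rho'=\rho$ on $G\times X$ reduces to showing that the $\mathfrak{C}'$-contribution to $\rho'(g,x)$ vanishes for $x\in X$: if $(g^{-1},g.x)\in C'_m$ for some $m$, then $g.x\in Y'\setminus X$, which contradicts the $G$-invariance of $X$. The only substantive point in the whole plan is the cross-section check for $X'\setminus X$, which itself is a one-line consequence of $G$-invariance; the remainder is bookkeeping against the definitions.
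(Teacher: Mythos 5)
Your proposal is correct and follows essentially the same route as the paper's proof: decompose \(X'=X\sqcup(X'\setminus X)\), note that \(Y'\setminus X\) is a cross section for \(X'\setminus X\) by \(G\)-invariance of \(X\), take injective covers of the two pieces via Lemma~\ref{lem:injcov}, merge them into an injective cover of \(X'\), and observe that the extra sets contribute nothing to \(\rho'(g,x)\) when \(x\in X\). The only difference is that you spell out the cross-section check for \(X'\setminus X\) slightly more explicitly than the paper does.
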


\begin{proof}
Since \(X\) is \(G\)-invariant and \(Y^{\prime}\) is a cross section for \(X^{\prime}\), the set \(Y^{\prime}\setminus X\) is a cross section for \(X^{\prime}\setminus X\). By Lemma~\ref{lem:injcov}, there is an injective cover \(\mathfrak{C}\) of \(X\) with respect to \(Y\), and an injective cover \(\mathfrak{D}\) of \(X^{\prime}\setminus X\) with respect to \(Y^{\prime}\setminus X\). Since \(Y\subseteq X\) and \(Y^{\prime}\setminus X\subseteq X^{\prime}\setminus X\) are disjoint, we have \(C\cap D=\varnothing\) for all \(C\in\mathfrak{C}\) and \(D\in\mathfrak{D}\). Hence \(\mathfrak{C}^{\prime}\coloneqq \mathfrak{C}\cup\mathfrak{D}\) is an injective cover of \(X^{\prime}\) with respect to \(Y^{\prime}\).

Consider the Borel partitions of unity \(\rho:G\times X\to\left[0,1\right]\) associated with \(\mathfrak{C}\) and \(\rho^{\prime}:G\times X^{\prime}\to\left[0,1\right]\) associated with \(\mathfrak{C}^{\prime}\), defined as in Lemma~\ref{lem:borpart} by
\[\rho\left(g,x\right)\coloneqq\sum\nolimits_{C\in\mathfrak{C}}\mathbf{1}_{C}\!\left(g^{-1},g.x\right)\quad\text{and}\quad
\rho^{\prime}\left(g,x^{\prime}\right)\coloneqq\sum\nolimits_{C^{\prime}\in\mathfrak{C}^{\prime}}\mathbf{1}_{C^{\prime}}\left(g^{-1},g.x^{\prime}\right).\]
Fix \(\left(g,x\right)\in G\times X\). For every \(D\in\mathfrak{D}\) we have \(D\subseteq G\times\left(Y^{\prime}\setminus X\right)\), while \(g.x\in X\) (by \(G\)-invariance of \(X\)), hence \(\mathbf{1}_{D}\left(g^{-1},g.x\right)=0\). Therefore,
\[\rho^{\prime}\left(g,x\right)
=\sum\nolimits_{C\in\mathfrak{C}}\mathbf{1}_{C}\left(g^{-1},g.x\right)
+\sum\nolimits_{D\in\mathfrak{D}}\mathbf{1}_{D}\left(g^{-1},g.x\right)
=\rho\left(g,x\right). \qedhere\]
\end{proof}

Using injective sets, Campbell theorem~\ref{prop:campbell} can be reformulated as follows:

\begin{prop}[Campbell theorem, revisited]
\label{prop:campbellrevi}
For every injective set \(C\subset G\times Y\), the action map
\[\mathrm{a}\mid_{C}:\left(C,\left(m_{G}\otimes\mu_{Y}\right)\mid_{C}\right)\to\left(\mathrm{a}\left(C\right),\mu\mid_{\mathrm{a}\left(C\right)}\right)\]
is measure preserving, meaning that it pushes \(\left(m_{G}\otimes\mu_{Y}\right)\mid_{C}\) to \(\mu\mid_{\mathrm{a}\left(C\right)}\).
\end{prop}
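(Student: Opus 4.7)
The plan is to deduce the pushforward identity $\mathrm{a}_{\ast}\bigl((m_{G}\otimes\mu_{Y})|_{C}\bigr)=\mu|_{\mathrm{a}(C)}$ by testing both sides against an arbitrary Borel $\phi:X\to[0,+\infty]$ and invoking the already established Campbell theorem (Proposition~\ref{prop:campbell}). As a preliminary remark, since $\mathrm{a}|_{C}$ is an injective Borel map between standard Borel spaces, Lusin--Souslin guarantees that $\mathrm{a}(C)$ is a Borel subset of $X$, so that $\mu|_{\mathrm{a}(C)}$ is well defined.

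The key step is choosing the right test function to feed into Campbell theorem. Given Borel $\phi:X\to[0,+\infty]$, define
\[f:G\times Y\longrightarrow[0,+\infty],\quad f(g,y)\coloneqq\mathbf{1}_{C}(g,y)\cdot\phi(g.y).\]
On the one hand, by definition of the restriction and Fubini,
\[m_{G}\otimes\mu_{Y}(f)=\int_{C}\phi(\mathrm{a}(g,y))\,d(m_{G}\otimes\mu_{Y})(g,y).\]
On the other hand, by Proposition~\ref{prop:campbell}, $m_{G}\otimes\mu_{Y}(f)=\mu(f_{X})$, where
\[f_{X}(x)=\sum\nolimits_{g\in Y_{x}}\mathbf{1}_{C}(g^{-1},g.x)\cdot\phi(g^{-1}.g.x)=\phi(x)\cdot\bigl|\{g\in Y_{x}:(g^{-1},g.x)\in C\}\bigr|.\]

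The injectivity of $C$ enters precisely here: the map $g\mapsto(g^{-1},g.x)$ is a bijection between $Y_{x}$ and $\{(h,y)\in G\times Y:h.y=x\}$, under which the condition $g\in Y_{x}$, $(g^{-1},g.x)\in C$ corresponds to $(h,y)\in C$ with $\mathrm{a}(h,y)=x$. Since $\mathrm{a}|_{C}$ is injective, this preimage has at most one element, with exactly one element iff $x\in\mathrm{a}(C)$. Hence $f_{X}(x)=\phi(x)\cdot\mathbf{1}_{\mathrm{a}(C)}(x)$, and combining with the previous displays,
\[\int_{C}\phi\circ\mathrm{a}\,d(m_{G}\otimes\mu_{Y})=\mu(f_{X})=\int_{\mathrm{a}(C)}\phi\,d\mu.\]
Since $\phi$ was arbitrary, this is exactly the claim that $\mathrm{a}|_{C}$ pushes $(m_{G}\otimes\mu_{Y})|_{C}$ to $\mu|_{\mathrm{a}(C)}$.

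I do not expect a genuine obstacle here: once the test function $f(g,y)=\mathbf{1}_{C}(g,y)\phi(g.y)$ is in hand, the whole argument is bookkeeping driven by Proposition~\ref{prop:campbell}. The only subtlety is tracking the inversion $g\leftrightarrow g^{-1}$ built into the definition of the $X$-periodization, which is what makes the injectivity of $C$ (rather than injectivity of the other projection) the relevant hypothesis.
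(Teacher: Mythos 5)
Your proof is correct and follows essentially the same route as the paper: both arguments hinge on the observation that injectivity of \(\mathrm{a}\mid_{C}\) collapses the \(X\)-periodization sum to \(\mathbf{1}_{\mathrm{a}\left(C\right)}\) (times the test function), and then invoke Proposition~\ref{prop:campbell}. The only cosmetic difference is that you verify the pushforward identity against test functions \(\phi\) on \(X\), whereas the paper verifies it on Borel subsets \(E\subseteq C\) using that subsets of injective sets are injective; these are interchangeable.
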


\begin{proof}
For a Borel set \(C\subset G\times Y\), being an injective set means that the \(X\)-periodization of \(f\coloneqq \mathbf{1}_{C}\) is
\[f_{X}\left(x\right)=\sum\nolimits_{g\in Y_{x}}\mathbf{1}_{C}\left(g^{-1},g.x\right)=\mathbf{1}_{\mathrm{a}\left(C\right)}\left(x\right),\quad x\in X.\]
Then by virtue of Campbell theorem~\ref{prop:campbell}, \(m_{G}\otimes\mu_{Y}\left(C\right)=\mu\left(\mathrm{a}\left(C\right)\right)\). Since a subset of an injective set is again injective, this means that \(m_{G}\otimes\mu_{Y}\left(E\right)=\mu\left(\mathrm{a}\left(E\right)\right)\) for every Borel set \(E\subseteq C\).
\end{proof}

Using injective covers we can establish the injectivity of the transverse correspondence~\ref{thm:corresp}.

\begin{prop}\label{prop:injcorres}
The transverse correspondence \(\mathcal{M}^{G}\left(X\right)\to\mathcal{M}^{E_{G}^{Y}}\left(Y\right)\), \(\mu\mapsto\mu_{Y}\), is injective. Moreover, for every \(\nu\in\mathcal{M}\left(Y\right)\) there is at most one \(\mu\in\mathcal{M}\left(X\right)\) satisfying Campbell theorem~\ref{prop:campbell} with \(\nu\):
\[m_{G}\otimes\nu\left(f\right)=\mu\left(f_{X}\right)\]
for every Borel function \(f:G\times Y\to\left[0,+\infty\right]\).
\end{prop}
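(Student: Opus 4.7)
My plan is to prove the \emph{moreover} statement directly, since it is strictly stronger than injectivity: indeed, by Campbell theorem~\ref{prop:campbell}, the transverse measure $\mu_{Y}$ always satisfies the Campbell identity together with $\mu$, so uniqueness of $\mu$ given $\nu = \mu_{Y}$ subsumes injectivity of $\mu\mapsto\mu_{Y}$.

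The main tool is the injective cover construction from Lemma~\ref{lem:injcov}. Fix once and for all an injective cover $\mathfrak{C} = \{C_{n} : n\in\mathbb{N}\}$ of $X$ with respect to $Y$. The key observation, already implicit in Proposition~\ref{prop:campbellrevi}, is that for any injective set $C\subseteq G\times Y$, the $X$-periodization of $\mathbf{1}_{C}$ is exactly $\mathbf{1}_{\mathrm{a}(C)}$, because each $x\in\mathrm{a}(C)$ has a \emph{unique} preimage in $C$ under $\mathrm{a}$. Hence if a measure $\mu\in\mathcal{M}(X)$ satisfies the Campbell identity $m_{G}\otimes\nu(f) = \mu(f_{X})$ with $\nu\in\mathcal{M}(Y)$, then applying it to $f = \mathbf{1}_{E}$ for any Borel $E\subseteq C_{n}$ (which is again an injective set as a subset of an injective set) yields
\[
m_{G}\otimes\nu(E) \;=\; \mu(\mathrm{a}(E)).
\]
In other words, $\mathrm{a}|_{C_{n}}$ pushes $(m_{G}\otimes\nu)|_{C_{n}}$ forward to $\mu|_{\mathrm{a}(C_{n})}$.

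Now I would conclude as follows. Let $B\subseteq X$ be an arbitrary Borel set. Using the defining properties of the injective cover, $B$ decomposes as the disjoint union $B = \bigsqcup_{n}\bigl(B\cap\mathrm{a}(C_{n})\bigr)$, and for each $n$ the set $E_{n} := C_{n}\cap(\mathrm{a}|_{C_{n}})^{-1}(B)$ is a Borel subset of $C_{n}$ with $\mathrm{a}(E_{n}) = B\cap\mathrm{a}(C_{n})$. Applying the previous identity to each $E_{n}$ and summing gives
\[
\mu(B) \;=\; \sum_{n\in\mathbb{N}} m_{G}\otimes\nu(E_{n}),
\]
which depends only on $\nu$ and the choice of $\mathfrak{C}$, not on $\mu$. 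Hence $\mu$ is uniquely determined by $\nu$, proving the \emph{moreover} part; taking $\nu = \mu_{Y}$ yields the asserted injectivity of the transverse correspondence.

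I do not expect a genuine obstacle here: all of the work has been done in Lemma~\ref{lem:injcov} (existence of an injective cover) and Proposition~\ref{prop:campbellrevi} (Campbell theorem in its injective-set form). The only mild point to be careful about is to exploit that \emph{subsets} of injective sets are injective, so that the Campbell identity on indicators of injective sets in fact determines the full restricted measure $(m_{G}\otimes\nu)|_{C_{n}}$ up to pushforward, rather than just its total mass.
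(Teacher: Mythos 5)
Your proposal is correct and follows essentially the same route as the paper's own proof: fix an injective cover, note that the Campbell identity applied to indicators of Borel subsets of each injective piece shows that $\mathrm{a}|_{C_{n}}$ pushes $\left(m_{G}\otimes\nu\right)|_{C_{n}}$ to $\mu|_{\mathrm{a}\left(C_{n}\right)}$, and sum over the cover to recover $\mu$ from $\nu$. The paper likewise proves the \emph{moreover} clause directly and deduces injectivity as the special case $\nu=\mu_{Y}$, so there is nothing to add.
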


\begin{proof}
Let \(\mu\in\mathcal{M}\left(X\right)\) be arbitrary. Suppose there is \(\nu\in\mathcal{M}\left(Y\right)\) such that \(\mu\) satisfies Campbell theorem with \(\nu\). Fix an injective cover \(\mathfrak{C}\) using Lemma~\ref{lem:injcov}. Using the same reasoning as in Proposition~\ref{prop:campbellrevi}, for every Borel set \(A\subseteq X\) we have
\[\mu\left(A\cap\mathrm{a}\left(C\right)\right)=m_{G}\otimes\nu\left(\mathrm{a}\mid_{C}^{-1}\left(A\right)\cap C\right),\quad C\in\mathfrak{C},\]
and therefore, summing over \(C\in\mathfrak{C}\),
\[\mu\left(A\right)=\sum\nolimits_{C\in\mathfrak{C}}m_{G}\otimes\nu\left(\mathrm{a}\mid_{C}^{-1}\left(A\right)\cap C\right).\]
Thus, \(\mu\) is completely recovered from \(\nu\).
\end{proof}

\subsection{Final proof of the transverse correspondence}

Let us start by describing the inverse correspondence of the transverse correspondence~\ref{thm:corresp}:
\begin{equation}\label{eq:invcorr}
\mathcal{M}^{E_{G}^{Y}}\left(Y\right)=\mathcal{M}_{\mathrm{Mecke}}^{G}\left(Y\right)\to\mathcal{M}^{G}\left(X\right),\quad \nu\mapsto\nu^{X}.
\end{equation}
Given \(\nu\in\mathcal{M}_{\mathrm{Mecke}}^{G}\left(Y\right)\), pick a Borel partition of unity \(\rho\) using Lemma~\ref{lem:borpart}, and define \(\nu^{X}\in\mathcal{M}\left(X\right)\) by
\[\nu^{X}\left(f\right)\coloneqq \iint\nolimits_{G\times Y}\rho\left(g,g^{-1}.y\right)f\left(g^{-1}.y\right)dm_{G}\left(g\right)d\nu\left(y\right)\]
for every Borel function \(f:X\to\left[0,+\infty\right]\). Let us verify the basic properties of those \(\nu\) and \(\nu^{X}\).

\begin{claim}\label{clm:campbcorr}
The measures \(\nu\) and \(\nu^{X}\) satisfies Campbell theorem together:
\[\nu^{X}\left(f_{X}\right)=m_{G}\otimes\nu\left(f\right)\]
for every Borel function \(f:G\times Y\to\left[0,+\infty\right]\). Therefore, \(\nu^{X}\) is \(\sigma\)-finite and its definition is independent on the choice of \(\rho\).
\end{claim}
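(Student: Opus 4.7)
The plan is to verify the Campbell identity $\nu^{X}(f_{X})=m_{G}\otimes\nu(f)$ by a double Haar change of variables that disentangles the group variable from the sum over $Y_{y}$, after which the Mecke hypothesis on $\nu$ combined with the partition-of-unity property of $\rho$ collapses the remaining $Y$-integral to the right-hand side. The $\sigma$-finiteness of $\nu^{X}$ and the independence of $\rho$ will then be corollaries of this identity and the uniqueness clause of Proposition~\ref{prop:injcorres}.

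Concretely, after substituting $g\mapsto g^{-1}$ in the definition of $\nu^{X}(f_{X})$ (using unimodularity), I would use $Y_{g.y}=Y_{y}g^{-1}$ to rewrite the periodization as $f_{X}(g.y)=\sum_{k\in Y_{y}}f(gk^{-1},k.y)$; interchanging the sum with the Haar integral (Tonelli) and then substituting $g\mapsto gk$ inside the inner $m_{G}$-integral leads to
\[\nu^{X}(f_{X})=\int_{G}\int_{Y}\sum_{k\in Y_{y}}\rho(k^{-1}g^{-1},gk.y)\,f(g,k.y)\,d\nu(y)\,dm_{G}(g).\]
For each fixed $g$, the inner sum is precisely the $Y$-periodization $(\phi_{g})_{Y}(y)$ of $\phi_{g}(k,y)\coloneqq\rho(k^{-1}g^{-1},gk.y)f(g,k.y)$. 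Invoking Theorem~\ref{thm:relativemecke} to regard $\nu$ as an element of $\mathcal{M}_{\mathrm{Mecke}}^{G}(Y)$, I replace $\int_{Y}(\phi_{g})_{Y}\,d\nu$ by $\int_{Y}(\phi_{g})_{X}\,d\nu$; a direct computation gives $(\phi_{g})_{X}(y)=f(g,y)\sum_{k\in Y_{y}}\rho(kg^{-1},g.y)$, and since $\{kg^{-1}:k\in Y_{y}\}=Y_{g.y}$ by the identity $Y_{g.y}=Y_{y}g^{-1}$ used once more, the partition-of-unity property forces the inner sum to equal $1$. Hence $(\phi_{g})_{X}(y)=f(g,y)$, and integrating in $g$ closes the Campbell identity.

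For $\sigma$-finiteness, I would choose Borel partitions $G=\bigsqcup_{n}K_{n}$ and $Y=\bigsqcup_{m}B_{m}$ into sets of finite $m_{G}$- and $\nu$-measure respectively. The Campbell identity just proved, applied to the indicators $\mathbf{1}_{K_{n}\times B_{m}}$, shows that $A_{n,m}\coloneqq\{x\in X:(\mathbf{1}_{K_{n}\times B_{m}})_{X}(x)\geq 1\}$ has finite $\nu^{X}$-mass, and the sets $A_{n,m}$ clearly cover $X$ (every $x$ admits some $g\in Y_{x}$ with $g^{-1}\in K_{n}$ and $g.x\in B_{m}$ for appropriate $n,m$). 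Independence of $\rho$ then follows because any choice of Borel partition of unity yields a measure satisfying Campbell with the same $\nu$, and Proposition~\ref{prop:injcorres} asserts at most one such measure exists.

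The main obstacle is strictly bookkeeping: carrying out the two Haar substitutions and the use of $Y_{g.y}=Y_{y}g^{-1}$ without losing track of which variable is bound where, and recognising the resulting integrand as exactly the $Y$-periodization of the Mecke test function $\phi_{g}$. Once this algebraic shape is identified, the collapse via the partition of unity and the final Fubini reassembly are immediate.
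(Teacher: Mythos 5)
Your proof is correct and follows essentially the same route as the paper: rewrite \(\nu^{X}(f_{X})\) via Haar substitutions so that the integrand becomes the \(Y\)-periodization of a \(g\)-parametrized test function, apply the Mecke equation for each fixed \(g\), and collapse the result using the partition-of-unity property of \(\rho\), with independence of \(\rho\) then following from Proposition~\ref{prop:injcorres}. The only (harmless) deviation is in the \(\sigma\)-finiteness step, where you partition \(G\times Y\) into finite-measure rectangles and use Markov's inequality, whereas the paper integrates a strictly positive function \(f\) with \(m_{G}\otimes\nu(f)<+\infty\) and observes that \(f_{X}\) is strictly positive; both arguments are valid.
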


\begin{proof}
Let \(f:G\times Y\to\left[0,+\infty\right]\) be a Borel function. For every \(g\in G\) define
\[f^{g}\left(h,y\right)\coloneqq \rho\left(g,g^{-1}.y\right)f\left(g^{-1}h^{-1},h.y\right),\quad h\in Y_{y},\]
with the fixed Borel partition of unity \(\rho\) defining \(\nu^{X}\), and then
\[f^{g}\left(h^{-1},h.y\right)=\rho\left(g,g^{-1}h.y\right)f\left(g^{-1}h,y\right),\quad h\in Y_{y}.\]
We now claim that the desired identity follows via
\begin{align*}
\nu^{X}\left(f_{X}\right)
&=\iint\nolimits_{G\times Y}\rho\left(g,g^{-1}.y\right)f_{X}\left(g^{-1}.y\right)dm_{G}\left(g\right)d\nu\left(y\right)\\
\left(\mathrm{i}\right)&=\iint\nolimits_{G\times Y}\sum\nolimits_{h\in Y_{y}}f^{g}\left(h,y\right)dm_{G}\left(g\right)d\nu\left(y\right)\\
\left(\mathrm{ii}\right)&=\iint\nolimits_{G\times Y}\sum\nolimits_{h\in Y_{y}}f^{g}\left(h^{-1},h.y\right)dm_{G}\left(g\right)d\nu\left(y\right)\\
&=\iint\nolimits_{G\times Y}\sum\nolimits_{h\in Y_{y}}\rho\left(g,g^{-1}h.y\right)f\left(g^{-1}h,y\right)dm_{G}\left(k\right)d\nu\left(y\right)\\
\left(\mathrm{iii}\right)&=\iint\nolimits_{G\times Y}\sum\nolimits_{g\in Y_{h^{-1}.y}}\rho\left(g,h^{-1}.y\right)f\left(h^{-1},y\right)dm_{G}\left(h\right)d\nu\left(y\right)\\
\left(\mathrm{iv}\right)&=\iint\nolimits_{G\times Y}f\left(h^{-1},y\right)dm_{G}\left(h\right)d\nu\left(y\right)=m_{G}\otimes\nu\left(f\right);
\end{align*}
indeed, for \(\left(\mathrm{i}\right)\) one can directly verify that for every \(g\in G\) and \(y\in Y\),
\[f_{X}\left(g^{-1}.y\right)=\sum\nolimits_{h\in Y_{y}}f\left(g^{-1}h^{-1},h.y\right);\]
for \(\left(\mathrm{ii}\right)\) we applied the Mecke equation to \(f^{g}\) for each \(g\in G\); for \(\left(\mathrm{iii}\right)\) we substitute \(h\mapsto h^{-1}g\) and use unimodularity; finally, for \(\left(\mathrm{iv}\right)\) we used that \(\rho\) is a Borel partition of the unity.

We now deduce that \(\nu^{X}\) is \(\sigma\)-finite. Using that \(m_{G}\otimes\nu\) is \(\sigma\)-finite, pick a Borel function \(f:G\times Y\to\mathbb{R}_{>0}\) (strictly positive) such that \(\nu^{X}\left(f_{X}\right)=m_{G}\otimes\nu\left(f\right)<+\infty\). However, as \(f\) is strictly positive, \(f_{X}\) is strictly positive as well, and it follows that \(\nu^{X}\) is \(\sigma\)-finite. Finally, since \(\nu^{X}\) satisfies Campbell theorem with \(\nu\) for whatever choice of \(\rho\), from Proposition~\ref{prop:injcorres} it follows that \(\nu^{X}\) is independent of the choice of \(\rho\).
\end{proof}

\begin{claim}
\(\nu^{X}\in\mathcal{M}^{G}\left(X\right)\).
\end{claim}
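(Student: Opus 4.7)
The plan is to reduce $G$-invariance of $\nu^X$ to the Campbell identity already proved in Claim~\ref{clm:campbcorr}, by showing that every non-negative Borel function on $X$ is an $X$-periodization of some Borel function on $G \times Y$.

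First, I would verify that every Borel function $h : X \to [0, +\infty]$ admits a representation $h = f_X$ for some Borel $f : G \times Y \to [0, +\infty]$. Using the Borel partition of unity $\rho$ from Lemma~\ref{lem:borpart}, the identity $\sum_{g \in Y_x} \rho(g, x) = 1$ gives $h(x) = \sum_{g \in Y_x} \rho(g, x) h(x)$. Setting $f(g, y) \coloneqq \rho(g^{-1}, g.y)\, h(g.y)$, one checks directly that $f(g^{-1}, g.x) = \rho(g, x) h(x)$, and hence $f_X(x) = h(x)$ for every $x \in X$.

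Next, I would compute the action of $g_0 \in G$ on such an $X$-periodization. Using the identity $Y_{g_0^{-1}.x} = Y_x\, g_0$ from the preliminaries, every $g' \in Y_{g_0^{-1}.x}$ can be uniquely written as $g' = h g_0$ with $h \in Y_x$, and then $g'.\bigl(g_0^{-1}.x\bigr) = h.x$ while $(g')^{-1} = g_0^{-1} h^{-1}$. Thus
\[
(g_0.f_X)(x) \;=\; f_X(g_0^{-1}.x) \;=\; \sum_{h \in Y_x} f(g_0^{-1} h^{-1}, h.x) \;=\; \tilde f_X(x),
\]
where $\tilde f(g, y) \coloneqq f(g_0^{-1} g, y)$. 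In other words, translating an $X$-periodization corresponds simply to left-translating the $G$-variable of the underlying function.

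The invariance then follows by combining these two observations with Claim~\ref{clm:campbcorr} and unimodularity of $m_G$: writing any non-negative Borel $h$ as $h = f_X$, we get
\[
\nu^X(g_0.h) \;=\; \nu^X(\tilde f_X) \;=\; m_G \otimes \nu(\tilde f) \;=\; \iint f(g_0^{-1} g, y)\, dm_G(g)\, d\nu(y) \;=\; m_G \otimes \nu(f) \;=\; \nu^X(f_X) \;=\; \nu^X(h),
\]
where the fourth equality uses the left-invariance of $m_G$ (substituting $g \mapsto g_0 g$). Since this holds for all non-negative Borel $h$ and all $g_0 \in G$, this establishes $\nu^X \in \mathcal{M}^G(X)$. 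I do not foresee a real obstacle here: the only delicate point is ensuring that the representation $h = f_X$ is genuinely available for arbitrary Borel $h$, which is guaranteed by the existence of a Borel partition of unity from Lemma~\ref{lem:borpart}; the rest is a formal substitution using the already-established Campbell identity.
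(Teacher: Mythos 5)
Your proof is correct, but it takes a genuinely different route from the paper's. The paper fixes the translating element \(h\in G\), forms the translated partition of unity \(\rho_{h}\left(g,x\right)=\rho\left(gh^{-1},h.x\right)\), checks that it is again a Borel partition of unity, and invokes the independence of \(\nu^{X}\) on the choice of \(\rho\) (a consequence of Claim~\ref{clm:campbcorr} and Proposition~\ref{prop:injcorres}) to conclude by a change of variables in the defining integral of \(\nu^{X}\). You instead isolate two structural facts: the periodization operator \(f\mapsto f_{X}\) is surjective onto nonnegative Borel functions on \(X\) (your formula \(f\left(g,y\right)=\rho\left(g^{-1},g.y\right)h\left(g.y\right)\) does give \(f_{X}=h\), since \(\sum_{g\in Y_{x}}\rho\left(g,x\right)=1\) for every \(x\) by Lemma~\ref{lem:borpart}), and it intertwines the \(G\)-action on \(X\) with left translation in the \(G\)-variable (your computation via \(Y_{g_{0}^{-1}.x}=Y_{x}g_{0}\) and the bijection \(g^{\prime}=hg_{0}\) is correct). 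Invariance then drops out of the Campbell identity of Claim~\ref{clm:campbcorr} together with left-invariance of \(m_{G}\). Both arguments ultimately rest on Claim~\ref{clm:campbcorr}; yours buys a cleaner conceptual statement (the Campbell identity determines \(\nu^{X}\) and is manifestly translation-invariant) and avoids verifying that \(\rho_{h}\) is again a partition of unity, while the paper's stays entirely at the level of the explicit integral formula for \(\nu^{X}\) and makes the role of the non-uniqueness of \(\rho\) explicit.
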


\begin{proof}
Let \(h\in G\) be arbitrary. Fix a Borel partition of unity \(\rho\), and define
\[\rho_{h}:G\times X\longrightarrow\left[0,1\right],\quad\rho_{h}\left(g,x\right)=\rho\left(gh^{-1},h.x\right).\]
Observe that \(\rho_{h}\) is again a Borel partition of the unity. Therefore, by the independence of \(\nu^{X}\) on the choice of the Borel partition of unity as in Claim~\ref{clm:campbcorr}, we have \(\nu^{X}=\nu_{h}^{X}\), where \(\nu_{h}^{X}\) is defined by \(\rho_{h}\) (in the same way \(\nu^{X}\) is defined by \(\rho\)). Then for every Borel function \(f:X\to\left[0,+\infty\right]\), using Fubini's theorem and unimodularity, we have
\begin{align*}
\nu^{X}\left(f\left(h.\cdot\right)\right)
&=\nu_{h}^{X}\left(f\left(h.\cdot\right)\right)\\
&=\iint\nolimits_{G\times Y}\rho_{h}\left(g,g^{-1}.y\right)f\left(hg^{-1}.y\right)dm_{G}\left(g\right)d\nu\left(y\right)\\
&=\iint\nolimits_{G\times Y}\rho\left(gh^{-1},hg^{-1}.y\right)f\left(hg^{-1}.y\right)dm_{G}\left(g\right)d\nu\left(y\right)\\
&=\iint\nolimits_{G\times Y}\rho\left(g^{-1},g.y\right)f\left(g.y\right)dm_{G}\left(g\right)d\nu\left(y\right)=\nu^{X}\left(f\right).\qedhere
\end{align*}
\end{proof}

\begin{proof}[Final proof of Theorem~\ref{thm:corresp}]
In light of Proposition~\ref{prop:mecke} and Theorem~\ref{thm:relativemecke}, we need to show that the correspondence \(\mathcal{M}^{G}\left(X\right)\to\mathcal{M}_{\mathrm{Mecke}}^{G}\left(Y\right)\), \(\mu\mapsto\mu_{Y}\), forms a bijection. The injectivity has been established in Proposition~\ref{prop:injcorres}. For the surjectivity, for every \(\nu\in\mathcal{M}_{\mathrm{Mecke}}^{G}\left(Y\right)\) we have constructed \(\nu^{X}\in\mathcal{M}^{G}\left(X\right)\) via the correspondence \eqref{eq:invcorr}. By Claim~\ref{clm:campbcorr}, \(\nu^{X}\left(f_{X}\right)=m_{G}\otimes\nu\left(f\right)\) for every Borel function \(f:G\times Y\to\left[0,+\infty\right]\), and by Proposition~\ref{prop:injcorres}, this property determines \(\nu_{Y}^{X}\) uniquely among the elements of \(\mathcal{M}_{\mathrm{Mecke}}^{G}\left(Y\right)\), which include \(\nu\), concluding that \(\nu_{Y}^{X}=\nu\).
\end{proof}

\section{Transverse \(G\)-spaces and \(G\)-factors}

Let \(G\) be an lcsc unimodular group. A {\bf transverse \(G\)-space} is a triplet
\[\left(X,\mu,Y\right),\]
where \(\left(X,\mu\right)\) is a probability preserving \(G\)-space and \(Y\) is a {\bf locally integrable cross section}, that is:
\begin{enumerate}
    \item The set \(G.Y\subseteq X\) is a \(\mu\)-conull set, and \(Y_{x}\subset G\) is a locally finite set for every \(x\in G.Y\).\footnote{Note that the action map \(G\times Y\to X\) has countable sections, and therefore by Lusin--Novikov uniformization theorem the image of \(G\times Y\) under the action map, namely the set \(G.Y\), is Borel.}
    \item For every compact set \(K\subset G\),\footnote{Using the \(G\)-invariance of \(\mu\), one can see that if this holds for one compact set in \(G\) with nonempty interior, then it holds for all compact sets in \(G\).}
    \[\int_{X}\left|Y_{x}\cap K\right|d\mu\left(x\right)<+\infty.\]
\end{enumerate}

Local integrability is nothing but the finiteness of the transverse measure:

\begin{prop}\label{prop:intensityli}
Let \(\left(X,\mu\right)\) be a probability preserving \(G\)-space with a cross section \(Y\). Then \(Y\) is locally integrable precisely when the transverse measure \(\mu_{Y}\) is finite:
\[\iota_{\mu}\left(Y\right)\coloneqq \mu_{Y}\left(Y\right)<+\infty.\]
\end{prop}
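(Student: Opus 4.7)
The plan is to read off the equivalence directly from Campbell theorem (Proposition~\ref{prop:campbell}), applied to an indicator test function that depends only on the $G$-coordinate. Concretely, I fix a compact set $K \subset G$ with $m_G(K) \in (0,+\infty)$, for instance a compact neighborhood of the identity, and consider the Borel function
\[
f : G \times Y \longrightarrow [0,+\infty], \qquad f(g,y) \coloneqq \mathbf{1}_{K^{-1}}(g),
\]
which is independent of $y\in Y$. Computing the $X$-periodization,
\[
f_X(x) = \sum\nolimits_{g\in Y_x} \mathbf{1}_{K^{-1}}(g^{-1}) = |Y_x \cap K|, \qquad x\in X.
\]

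Now Campbell theorem gives $m_G\otimes \mu_Y(f) = \mu(f_X)$, and since $f$ does not depend on $y$, the left-hand side factors as $m_G(K^{-1})\cdot \mu_Y(Y)$. By unimodularity of $G$, $m_G(K^{-1}) = m_G(K)$, so the identity reads
\[
m_G(K)\cdot \mu_Y(Y) \;=\; \int_X |Y_x \cap K|\, d\mu(x).
\]
Because $0 < m_G(K) < +\infty$, one side is finite iff the other is. Thus $\mu_Y(Y) < +\infty$ is equivalent to $\int_X |Y_x \cap K|\, d\mu(x) < +\infty$ for this single choice of $K$.

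To finish, I invoke the footnote attached to the definition of locally integrable: using the $G$-invariance of $\mu$ (and covering any compact set by finitely many right-translates of the interior of a fixed compact neighborhood of the identity), the finiteness of $\int_X |Y_x \cap K|\, d\mu(x)$ for one compact $K$ with nonempty interior implies it for every compact set in $G$. Combining, $\mu_Y(Y) < +\infty$ holds iff $Y$ is locally integrable, which is exactly the claim. There is no serious obstacle here; the only point worth checking carefully is that the test function $f$ is allowed to be $\{0,1\}$-valued and independent of $y$, and that unimodularity is what lets us identify $m_G(K^{-1})$ with $m_G(K)$ — both are immediate from the hypotheses.
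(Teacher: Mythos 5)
Your proof is correct and follows essentially the same route as the paper: both apply Campbell's theorem to an indicator test function of a compact set depending only on the $G$-coordinate, yielding $m_{G}\left(K\right)\cdot\mu_{Y}\left(Y\right)=\int_{X}\left|Y_{x}\cap K\right|d\mu\left(x\right)$ (up to replacing $K$ by $K^{-1}$). The only cosmetic difference is that you fix one $K$ of positive finite measure and invoke the footnote to pass to all compact sets, whereas the paper runs the identity for arbitrary $K$; both are fine.
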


\begin{proof}
Let \(K\subset G\) be an arbitrary compact set, and look at the Borel function \(f:G\times Y\to\left[0,+\infty\right]\), \(f\left(g,y\right)\coloneqq\mathbf{1}_{K}\left(g\right)\), whose \(X\)-periodization is
\[f_{X}\left(x\right)=\sum\nolimits_{g\in Y_{x}}\mathbf{1}_{K}\left(g^{-1}\right)=\left|Y_{x}\cap K^{-1}\right|,\quad x\in X.\]
It follows from Campbell theorem~\ref{prop:campbell} that
\[m_{G}\left(K\right)\cdot\mu_{Y}\left(Y\right)=m_{G}\otimes\mu_{Y}\left(f\right)=\mu\left(f_{X}\right)=\int_{X}\left|Y_{x}\cap K^{-1}\right|d\mu\left(x\right).\]
Since this holds for any compact set \(K\), the proof follows immediately.
\end{proof}

An important family of locally integrable cross sections are \emph{separated cross sections}. For a cross section \(Y\) of a Borel \(G\)-space \(X\), define the set
\[\Lambda_{Y}\coloneqq \{g\in G:g.Y\cap Y\neq\emptyset\}=\bigcup\nolimits_{y\in Y}Y_{y}.\]
Note that \(\Lambda_{Y}\) is symmetric (namely \(\Lambda_{Y}^{-1}=\Lambda_{Y}\)), and we will refer to \(\Lambda_{Y}\) as the {\bf return times set} of \(Y\). Then a cross section \(Y\) of a Borel \(G\)-space \(X\) is said to be {\bf separated}, if \(e_{G}\in\Lambda_{Y}\) is an isolated point. The proof of the following lemma is elementary (see~\cite[Lemma 3.1(ii)]{bjorklund2025int}).

\begin{lem}\label{lem:separatedli}
If \(Y\) is a separated cross section for a Borel \(G\)-space \(X\), then \(Y\) is locally integrable with respect to every \(G\)-invariant probability measures on \(X\).
\end{lem}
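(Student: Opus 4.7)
The plan is to reduce the statement, via Proposition~\ref{prop:intensityli}, to showing that the transverse measure \(\mu_Y\) is finite, and then to exhibit a single identity neighborhood \(V\subset G\) on which the counting function \(x\mapsto|Y_x\cap V|\) is uniformly bounded. Separation is exactly the input that makes this possible: I would first use the hypothesis that \(e_G\) is isolated in \(\Lambda_Y\) to fix an open identity neighborhood \(U_0\) with \(U_0\cap\Lambda_Y=\{e_G\}\), and then use the continuity of the group operations to pick an open identity neighborhood \(V\) with \(V V^{-1}\subseteq U_0\).

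The key deterministic step is the bound
\[|Y_x\cap V|\leq 1\quad\text{for every }x\in X.\]
To prove this, suppose \(g_1,g_2\in Y_x\cap V\). Then \(g_1.x,g_2.x\in Y\), and the element \(g_1 g_2^{-1}\) satisfies \((g_1 g_2^{-1}).(g_2.x)=g_1.x\in Y\) with \(g_2.x\in Y\), so \(g_1 g_2^{-1}\in\Lambda_Y\). On the other hand, \(g_1 g_2^{-1}\in V V^{-1}\subseteq U_0\). By the choice of \(U_0\), this forces \(g_1 g_2^{-1}=e_G\), i.e. \(g_1=g_2\). Note that one does \emph{not} need \(Y_x\) to be nonempty: if \(Y_x=\varnothing\) the bound is vacuous.

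To finish, I would apply Campbell's theorem (Proposition~\ref{prop:campbell}) to the Borel function \(f(g,y)\coloneqq\mathbf{1}_V(g^{-1})\) on \(G\times Y\), whose \(X\)-periodization is \(f_X(x)=|Y_x\cap V|\). This gives
\[m_G(V^{-1})\cdot\mu_Y(Y)=m_G\otimes\mu_Y(f)=\mu(f_X)=\int_X|Y_x\cap V|\,d\mu(x)\leq \mu(X)=1,\]
so \(\mu_Y(Y)\leq 1/m_G(V^{-1})<+\infty\) because \(V^{-1}\) is a nonempty open subset of \(G\) and hence has positive Haar measure. By Proposition~\ref{prop:intensityli}, this is exactly local integrability of \(Y\). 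There is no real obstacle here; the only subtlety worth flagging is the need to use \(V V^{-1}\subseteq U_0\) rather than just \(V\subseteq U_0\), since the product \(g_1 g_2^{-1}\) of two returning elements need not itself lie in \(V\).
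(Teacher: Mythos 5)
Your proof is correct and is essentially the standard elementary argument that the paper defers to \cite[Lemma 3.1(ii)]{bjorklund2025int}: separation yields a neighborhood \(V\) with \(VV^{-1}\cap\Lambda_Y=\{e_G\}\), hence \(|Y_x\cap V|\leq 1\) pointwise, and Campbell's theorem then bounds \(m_G(V)\cdot\mu_Y(Y)\) by \(\mu(X)=1\). The reduction via Proposition~\ref{prop:intensityli} and the remark about needing \(VV^{-1}\subseteq U_0\) rather than \(V\subseteq U_0\) are both handled correctly.
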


\subsection{Null sets in transverse \(G\)-spaces}

The following property relates the null sets of measures to the null sets of the transverse measures. In the separated case it was proved in~\cite[Lemma 4.6]{bjorklund2025int}.

\begin{prop}\label{prop:null}
For every transverse \(G\)-space \(\left(X,\mu,Y\right)\), \(\mu\)-null and \(\mu_{Y}\)-null sets are related as follows:
\begin{enumerate}
    \item For every \(G\)-invariant Borel set \(A\subseteq X\),
    \[A\text{ is }\mu\text{-conull }\iff A\cap Y\text{ is }\mu_{Y}\text{-conull}.\]
    \item For every \(\mu_{Y}\)-conull Borel set \(B\subseteq Y\),
    \[B_{x}=Y_{x}\text{ for }\mu\text{-a.e. }x\in X,\]
    and
    \[B_{y}=Y_{y}\text{ for }\mu_{Y}\text{-a.e. }y\in Y.\]
\end{enumerate}
\end{prop}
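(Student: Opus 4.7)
The plan is to lean on Campbell theorem~\ref{prop:campbell} together with the Mecke equation (which holds for $\mu_Y$ by Proposition~\ref{prop:mecke} combined with Theorem~\ref{thm:relativemecke}). I will prove (2) directly from these tools, then deduce the nontrivial direction of (1) as a quick consequence of (2); the easy direction of (1) is an immediate one-line application of Campbell.

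For the easy direction of (1), pick a compact $K\subset G$ with $m_{G}(K)>0$ and apply Campbell to $f(g,y):=\mathbf{1}_{K}(g)\,\mathbf{1}_{Y\setminus A}(y)$. Since $A$ is $G$-invariant, $g.x\in A$ whenever $x\in A$ and $g\in Y_{x}$, so $f_{X}$ vanishes on the $\mu$-conull set $A$. Thus $m_{G}(K)\cdot\mu_{Y}(Y\setminus A)=\mu(f_{X})=0$, i.e.\ $A\cap Y$ is $\mu_{Y}$-conull. For (2), set $N:=Y\setminus B$, so $\mu_{Y}(N)=0$, and fix a strictly positive Borel function $h:G\to(0,+\infty)$ with $m_{G}(h)<+\infty$; such $h$ exists by lcsc-ness, e.g.\ $h=\sum_{n}2^{-n}m_{G}(K_{n})^{-1}\mathbf{1}_{K_{n}}$ for a countable cover of $G$ by relatively compact open sets $K_{n}$. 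With $f(g,y):=h(g)\,\mathbf{1}_{N}(y)$, Campbell gives
\[
\int_{X}\sum\nolimits_{g\in Y_{x}}h(g^{-1})\,\mathbf{1}_{N}(g.x)\,d\mu(x)=\mu(f_{X})=m_{G}(h)\cdot\mu_{Y}(N)=0,
\]
so the nonnegative integrand vanishes for $\mu$-a.e.\ $x$; since $h>0$ everywhere, this forces $g.x\notin N$ for every $g\in Y_{x}$, i.e.\ $Y_{x}\subseteq B_{x}$, and since $B\subseteq Y$ gives the reverse containment we get $B_{x}=Y_{x}$ for $\mu$-a.e.\ $x$. For the second assertion of (2), apply the Mecke equation $\mu_{Y}(f_{X})=\mu_{Y}(f_{Y})$ to the same $f$: the right-hand side equals $\int_{Y}\mathbf{1}_{N}(y)\sum_{g\in Y_{y}}h(g)\,d\mu_{Y}(y)=0$, being supported on the $\mu_{Y}$-null set $N$, so $\mu_{Y}(f_{X})=0$ and the identical strict-positivity argument now applied to $\mu_{Y}$ yields $B_{y}=Y_{y}$ for $\mu_{Y}$-a.e.\ $y$.

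Finally, for the hard direction of (1), suppose $A\cap Y$ is $\mu_{Y}$-conull and set $B:=A\cap Y$. By the first conclusion of (2), $B_{x}=Y_{x}$ for $\mu$-a.e.\ $x$. Local integrability of $Y$ requires $G.Y$ to be $\mu$-conull, so $Y_{x}\neq\emptyset$ for $\mu$-a.e.\ $x$; hence $B_{x}\neq\emptyset$ $\mu$-a.e., producing some $g\in G$ with $g.x\in B\subseteq A$, which by $G$-invariance of $A$ forces $x\in A$. Thus $A$ is $\mu$-conull. The only mildly technical point in the whole proof is the existence of a strictly positive test function $h$ with $m_{G}(h)<+\infty$ (used to turn an integrated identity into a pointwise one); this is where the lcsc structure of $G$ really enters, and the exhaustion construction above handles it cleanly. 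Everything else is a formal manipulation of the Campbell and Mecke identities established in the preceding sections.
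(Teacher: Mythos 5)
Your proof is correct, and for Part (2) it takes a genuinely different route from the paper. The paper proves Part (1) first, via the single Campbell identity \(m_{G}(K)\cdot\mu_{Y}(A\cap Y)=\int_{A}\left|Y_{x}\cap K^{-1}\right|d\mu(x)\) together with a compact exhaustion \(K\nearrow G\), and then derives Part (2) by saturating \(B\) to the \(G\)-invariant (analytic, universally measurable) set \(G.B\) and applying Part (1) to it. You instead prove Part (2) directly: you integrate a strictly positive \(h:G\to(0,+\infty)\) with \(m_{G}(h)<+\infty\) against the Campbell identity (for the \(\mu\)-a.e.\ statement) and against the Mecke equation for \(\mu_{Y}\) (for the \(\mu_{Y}\)-a.e.\ statement), converting the hypothesis \(\mu_{Y}(Y\setminus B)=0\) into the pointwise conclusions \((Y\setminus B)_{x}=\emptyset\) a.e.\ and \((Y\setminus B)_{y}=\emptyset\) a.e.; you then recover the nontrivial direction of Part (1) from Part (2), reversing the paper's logical order. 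What your approach buys is that it bypasses the saturation \(G.B\) entirely --- and with it the measurability of analytic sets and, more importantly, the delicate point that membership in \(G.B\) only yields \(B_{x}\neq\emptyset\) rather than \(B_{x}=Y_{x}\), which is precisely the pointwise statement one needs and which your positive-test-function argument establishes cleanly. The cost is the (mild) extra input that \(\mu_{Y}\) satisfies the Mecke equation (Proposition~\ref{prop:mecke}), which the paper's proof of this proposition does not need. Both arguments ultimately rest on the Campbell theorem, and your use of the \(\sigma\)-compact exhaustion to build \(h\) plays the same role as the paper's monotone convergence along \(K\nearrow G\).
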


\begin{proof}
For Part (1), suppose \(A\subseteq X\) is \(G\)-invariant. Then for every compact set \(K\subset G\), by Campbell theorem~\ref{prop:campbell} we have
\begin{align*}
m_{G}\left(K\right)\cdot\mu_{Y}\left(A\cap Y\right)	
&=\int_{X}\left(\sum\nolimits_{g\in Y_{x}}\mathbf{1}_{K}\left(g^{-1}\right)\cdot\mathbf{1}_{A\cap Y}\left(g.x\right)\right)d\mu\left(x\right)\\
&=\int_{X}\left(\sum\nolimits_{g\in Y_{x}}\mathbf{1}_{K^{-1}}\left(g\right)\cdot\mathbf{1}_{A}\left(x\right)\right)d\mu\left(x\right)=\int_{A}\left|Y_{x}\cap K^{-1}\right|d\mu\left(x\right).
\end{align*}
Now if \(\mu_{Y}\left(A\cap Y\right)=0\), then by taking a countable compact exhaustion \(K\nearrow G\) and the monotone convergence theorem we deduce that \(\int_{A}\left|Y_{x}\right|d\mu\left(x\right)=0\), hence \(Y_{x}=\emptyset\) for \(\mu\)-a.e. \(x\in A\), and this implies that \(\mu\left(A\right)=0\). Conversely, if \(\mu_{Y}\left(A\cap Y\right)>0\) then \(\int_{A}\left|Y_{x}\right|d\mu\left(x\right)>0\) hence \(\mu\left(A\right)>0\). Before proving Part (2), let us note that the proof of Part (1) holds true for every \(G\)-invariant set \(A\) which is not necessarily Borel but merely \(\mu\)-measurable.

For Part (2), suppose \(B\subseteq Y\) is \(\mu_{Y}\)-conull. Then the (analytic, hence universally measurable) set \(G.B\subseteq X\) is \(G\)-invariant and satisfies \(G.B\cap Y\supseteq B\), hence \(G.B\cap Y\) is \(\mu_{Y}\)-conull, and it follows from Part (1) that \(G.B\) is \(\mu\)-conull.  Note that \(B_{x}=Y_{x}\) for every \(x\in G.B\), implying the first property in (2). Finally, since \(A\) is \(G\)-invariant, by Part (1) the set \(A\cap Y\) is \(\mu_{Y}\)-conull, and also \(\left(A\cap Y\right)_{y}=Y_{y}\) for every \(y\in A\cap Y\), establishing the second property in (2).
\end{proof}

Using Proposition~\ref{prop:null} we may define measure equivalence of cross sections.

\begin{prop}\label{prop:crossequiv}
Let \(\left(X,\mu\right)\) be a probability preserving \(G\)-space with two cross sections \(Y,Y^{\prime}\), and thus both \(G.Y\) and \(G.Y^{\prime}\) are \(\mu\)-conull. The following are equivalent:
\begin{enumerate}
    \item \(Y_{x}=Y_{x}^{\prime}\) for \(\mu\)-a.e. \(x\in X\).
    \item \(\mu_{Y}=\mu_{Y^{\prime}}\) as measures on \(Y\cup Y^{\prime}\).
    \item \(Y\cap Y^{\prime}\) is both \(\mu_{Y}\)-conull and \(\mu_{Y^{\prime}}\)-conull.
\end{enumerate}
When these conditions hold, we consider \(Y\) and \(Y^{\prime}\) to be {\bf measure equivalent} with respect to \(\mu\).
\end{prop}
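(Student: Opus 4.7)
The plan is to prove the cyclic chain $(1) \Rightarrow (2) \Rightarrow (3) \Rightarrow (1)$. The first two implications amount to bookkeeping using the definition of the transverse measure and the fact that $\mu_Y$ is concentrated on $Y$, while the main substantive step is $(3) \Rightarrow (1)$, which will be a direct application of Proposition~\ref{prop:null}(2).

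For $(1) \Rightarrow (2)$, I would extend any Borel $f: Y \cup Y' \to [0,+\infty]$ by zero outside its domain and apply Definition~\ref{dfn:transdef} with the same auxiliary function $w$ to both cross sections. Both $\mu_Y(f)$ and $\mu_{Y'}(f)$ are then given by $\int_X \sum_{g \in Y_x} f(g.x) w(g)\, d\mu(x)$ once we replace $Y'_x$ by $Y_x$ on the $\mu$-conull set provided by $(1)$. For $(2) \Rightarrow (3)$, the key observation is that $\mu_{Y'}$ is concentrated on $Y'$, since only values of the integrand on $Y'$ enter Definition~\ref{dfn:transdef}; hence $\mu_{Y'}(Y \setminus Y') = 0$, and by $(2)$ also $\mu_Y(Y \setminus Y') = 0$, so $Y \cap Y'$ is $\mu_Y$-conull. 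The symmetric argument yields $\mu_{Y'}$-conullness.

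For $(3) \Rightarrow (1)$, set $B \coloneqq Y \cap Y'$. Viewing $B$ as a $\mu_Y$-conull Borel subset of $Y$, Proposition~\ref{prop:null}(2) gives $B_x = Y_x$ for $\mu$-a.e.\ $x \in X$; but $B_x = Y_x \cap Y'_x$, so this says $Y_x \subseteq Y'_x$ on a $\mu$-conull set. Viewing the same $B$ as a $\mu_{Y'}$-conull Borel subset of $Y'$ yields $B_x = Y'_x$ for $\mu$-a.e.\ $x$, and hence $Y'_x \subseteq Y_x$ on a $\mu$-conull set. Intersecting the two full measure sets gives $Y_x = Y'_x$ for $\mu$-a.e.\ $x$. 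The only subtlety I foresee is that Proposition~\ref{prop:null} is stated for transverse $G$-spaces (i.e.\ with locally integrable cross sections), whereas the current proposition does not assume local integrability; this causes no real difficulty, however, because the relevant part of the proof of Proposition~\ref{prop:null}(2) uses only the $\sigma$-finiteness of the transverse measures, which holds universally by Campbell's theorem~\ref{prop:campbell}.
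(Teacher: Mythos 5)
Your proof is correct and follows essentially the same route as the paper: the implication \((1)\Rightarrow(2)\) by comparing the two transverse integrals with a common weight \(w\) (the paper phrases this via Campbell's theorem, you via Definition~\ref{dfn:transdef} directly, which amounts to the same computation), \((2)\Rightarrow(3)\) by noting \(\mu_{Y'}\) is concentrated on \(Y'\), and \((3)\Rightarrow(1)\) by applying Proposition~\ref{prop:null}(2) to \(B=Y\cap Y'\) twice, once for each cross section. Your closing remark that Proposition~\ref{prop:null} does not actually need local integrability here is a valid and worthwhile observation, since the present proposition is stated without that hypothesis.
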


\begin{proof}
For \(1\implies 2\), let \(f:Y\cup Y^{\prime}\to\left[0,+\infty\right]\) be an arbitrary Borel function. Fix a Borel function \(w:G\to\left[0,+\infty\right]\) with \(m_{G}\left(w\right)=1\), and define
\[F:G\times Y\longrightarrow\left[0,+\infty\right],\quad F\left(g,y\right)\coloneqq w\left(g\right)\cdot f\left(y\right).\]
From the assumption it follows that the two \(X\)-periodizations of \(F\) with respect to \(Y\) and to \(Y^{\prime}\) coincide, and therefore by Campbell theorem~\ref{prop:campbell},
\[\mu_{Y}\left(f\right)=m_{G}\otimes\mu_{Y}\left(F\right)=m_{G}\otimes\mu_{Y^{\prime}}\left(F\right)=\mu_{Y^{\prime}}\left(f\right).\]
The implication \(2\implies 3\) is trivial. For \(3\implies 1\), using Proposition~\ref{prop:null}(2) twice for \(Y\) and \(Y^{\prime}\), we get
\[Y_{x}=\left(Y\cap Y^{\prime}\right)_{x}=Y_{x}^{\prime}\text{ for }\mu\text{-a.e. }x\in X.\qedhere\]
\end{proof}

We end this section by showing that the transverse correspondence~\ref{thm:corresp} commutes with the ergodic decomposition. In the separated case it was essentially proved in~\cite[Lemma~4.9]{bjorklund2025int}. We recall that by the Ergodic Decomposition Theorem, for every probability preserving \(G\)-space \(\left(X,\mu\right)\), there exists a standard probability space \(\left(\Upsilon,\upsilon\right)\), each point \(e\in\Upsilon\) is a \(G\)-ergodic Borel probability measure on \(X\), such that, for every Borel function \(f:X\to\left[0,+\infty\right]\),
\[\mu\left(f\right)=\int_{\Upsilon}e\left(f\right)d\upsilon\left(e\right).\]
We also recall that, using the notion of invariance of sets to countable Borel equivalence relations, ergodicity and the ergodic decomposition theorem apply naturally to countable Borel equivalence relations.

\begin{prop}
Let \(\left(X,\mu,Y\right)\) be a transverse \(G\)-space. Then \(\mu=\int_{\Upsilon}e d\upsilon\left(e\right)\) is the ergodic decomposition of  \(\mu\) if and only if \(\mu_{Y}=\int_{\Upsilon}e_{Y}d\upsilon\left(e\right)\) is the ergodic decomposition of \(\mu_{Y}\).
\end{prop}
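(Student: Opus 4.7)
The plan is to reduce the statement to two ingredients: (i) the transverse correspondence is \emph{linear} in $\mu$ and hence commutes with direct integrals, and (ii) $\mu$ is $G$-ergodic if and only if $\mu_Y$ is $E_G^Y$-ergodic. Combined with the uniqueness of ergodic decompositions (both on the $G$-space side and on the countable-Borel-equivalence-relation side), these facts yield the equivalence in both directions.

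The first step is the ergodicity dictionary. I would show that the $G$-invariant Borel sets in $X$ and the $E_G^Y$-invariant Borel sets in $Y$ correspond (modulo the respective null ideals), with measure sent to measure. Given a $G$-invariant Borel $A\subseteq X$, the set $A\cap Y$ is $E_G^Y$-invariant; conversely, given an $E_G^Y$-invariant Borel $B\subseteq Y$, one can replace $B$ by $G.B\cap Y$ (the set $G.B$ being analytic but only used as a $\mu$-measurable $G$-invariant set, which is enough for Proposition~\ref{prop:null}(1) by the remark in its proof). Proposition~\ref{prop:null}(1) then gives $\mu(A)\in\{0,1\}$ iff $\mu_Y(A\cap Y)\in\{0,\mu_Y(Y)\}$, using that $\mu_Y(Y)=\iota_\mu(Y)<+\infty$ by Proposition~\ref{prop:intensityli}. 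Hence $\mu$ is $G$-ergodic iff $\mu_Y$ is $E_G^Y$-ergodic.

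The second step is the integral identity $\mu_Y=\int_\Upsilon e_Y\,d\upsilon(e)$. Fixing a Borel $w:G\to[0,+\infty]$ with $m_G(w)=1$ and a Borel $f:Y\to[0,+\infty]$, the function
\[h(x)\coloneqq\sum\nolimits_{g\in Y_x}f(g.x)\,w(g)\]
is Borel on $X$, so $e\mapsto e(h)=e_Y(f)$ is measurable on the parameter space $\Upsilon$. Applying the ergodic decomposition of $\mu$ to $h$ and using the defining formula of transverse measures gives
\[\mu_Y(f)=\mu(h)=\int_\Upsilon e(h)\,d\upsilon(e)=\int_\Upsilon e_Y(f)\,d\upsilon(e),\]
as desired.

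Putting the pieces together: if $\mu=\int e\,d\upsilon(e)$ is the ergodic decomposition of $\mu$, then each $e_Y$ is $E_G^Y$-ergodic by the first step, and $\mu_Y=\int e_Y\,d\upsilon(e)$ by the second step, so by uniqueness this is the ergodic decomposition of $\mu_Y$. The converse is symmetric: starting from the ergodic decomposition of $\mu_Y$, apply the inverse transverse correspondence~\eqref{eq:invcorr} $\nu\mapsto\nu^X$, which is again manifestly linear in $\nu$ (since $\rho$ can be fixed independently of $\nu$), and use that the same equivalence of Step~1 transports $E_G^Y$-ergodic measures back to $G$-ergodic ones. The only subtlety I would watch for is the measurability of the parameter map $e\mapsto e_Y$ in a sense strong enough to legitimately identify the resulting integral as an ergodic decomposition, but this reduces to the standard measurability of $e\mapsto e(h)$ for each Borel nonnegative $h$, which is part of the ergodic decomposition theorem itself.
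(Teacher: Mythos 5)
Your proof is correct and takes essentially the same route as the paper: first observe that the transverse correspondence (and its inverse) is linear and hence commutes with direct integrals, then reduce to showing that \(\mu\) is \(G\)-ergodic if and only if \(\mu_{Y}\) is \(E_{G}^{Y}\)-ergodic via Proposition~\ref{prop:null}(1). The only cosmetic difference is that for the converse direction of the ergodicity dictionary you saturate an \(E_{G}^{Y}\)-invariant set \(B\) to the analytic \(G\)-invariant set \(G.B\) (relying on the remark that Proposition~\ref{prop:null}(1) applies to \(\mu\)-measurable invariant sets), whereas the paper constructs a genuinely Borel \(G\)-invariant set \(A_{\lambda}\) with \(A_{\lambda}\cap Y=A\) from a Borel section \(\lambda(x)\in Y_{x}\); both are legitimate.
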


\begin{proof}
By Definition~\ref{dfn:transdef} of the transverse measure, if a measure \(\mu\in\mathcal{M}^{G}\left(X\right)\) is decomposed into \(\mu=\int_{\Upsilon}ed\upsilon\left(e\right)\) (whether the components are ergodic or not), then \(\mu_{Y}=\int_{\Upsilon}e_{Y}d\upsilon\left(e\right)\). Similarly, by the inverse transverse correspondence \eqref{eq:invcorr}, if a measure \(\nu\in\mathcal{M}^{E_{G}^{Y}}\left(Y\right)\) is decomposed into \(\nu=\int_{\Upsilon}ed\upsilon\left(e\right)\) (whether the components are ergodic or not), then \(\nu^{X}=\int_{\Upsilon}e^{X}d\upsilon\left(e\right)\). Therefore, it suffices to show that \(\mu\in\mathcal{M}^{G}\left(X\right)\) is \(G\)-ergodic if and only if \(\mu_{Y}\in\mathcal{M}^{E_{G}^{Y}}\left(Y\right)\) is \(E_{G}^{Y}\)-ergodic.

Note that every \(G\)-invariant Borel set \(A\subseteq X\) gives rise to the \(E_{G}^{Y}\)-invariant Borel set \(A\cap Y\), so that from Proposition~\ref{prop:null}(1) it follows directly that if \(\mu_{Y}\) is \(E_{G}^{Y}\)-ergodic then \(\mu\) is \(G\)-ergodic. To establish the converse, let us show that the association \(A\mapsto A\cap Y\) can be reversed; that is, every \(E_{G}^{Y}\)-invariant Borel set \(A\subseteq Y\) gives rise to a \(G\)-invariant Borel set \(A_{\lambda}\subseteq X\) such that \(A_{\lambda}\cap Y=A\). Once we will show that, from Proposition~\ref{prop:null}(1) it will directly follow that if \(\mu\) is \(G\)-ergodic then \(\mu_{Y}\) is \(E_{G}^{Y}\)-ergodic. In order to show this, pick a Borel section \(\lambda:X\to G\) subject to the property \(\lambda\left(x\right)\in Y_{x}\) (using Lusin--Novikov theorem~\ref{thm:lusinnovi}), and modify it so that \(\lambda\left(y\right)=e_{G}\) for every \(y\in Y\). Now given a Borel set \(A\subseteq Y\), define
\[A_{\lambda}\coloneqq\left\{x\in X:\lambda\left(x\right).x\in A\right\} \subseteq X.\]
By the choice of \(\lambda\) it is clear that \(A_{\lambda}\cap Y=A\). When \(A\) is \(E_{G}^{Y}\)-invariant, then for every \(x\in A_{\lambda}\) and \(g\in G\) we have \(\left(\lambda\left(x\right).x,\lambda\left(g.x\right)g.x\right)\in E_{G}^{Y}\), and therefore \(\lambda\left(x\right).x\in A\iff\lambda\left(g.x\right)g.x\in A\), namely \(x\in A_{\lambda}\iff g.x\in A_{\lambda}\), and thus \(A_{\lambda}\) is \(G\)-invariant as required.
\end{proof}

\subsection{Transverse \(G\)-factors}\label{sct:transfact}

In order to define \(G\)-factors for transverse \(G\)-spaces, one must proceed carefully: cross sections are typically null sets with respect to the ambient measure, while \(G\)-factors are generally defined only up to null sets. Let us start by introducing some convenient terminology.

\begin{defn}
Let \(\left(X,\mu\right)\) and \(\left(W,\nu\right)\) be probability preserving \(G\)-spaces.
\begin{itemize}
    \item A \(G\) {\bf-factor} \(\phi:\left(X,\mu\right)\to\left(W,\nu\right)\) is a Borel map \(\phi:X\to W\), such that \(\phi_{\ast}\mu=\nu\) and
    \[\phi\left(g.x\right)=g.\phi\left(x\right)\text{ for every }g\in G\text{ and }\mu\text{-a.e. }x\in X.\footnote{Crucially, the \(\mu\)-conull set of \(x\)'s on which this property holds may depend on \(g\).}\]
    \item A {\bf concrete \(G\)-factor} \(\phi_{o}:\left(X,\mu\right)\to\left(W,\nu\right)\) is a Borel map \(\phi_{o}:X_{o}\to W\) defined on a \(\mu\)-conull \(G\)-invariant Borel set \(X_{o}\subseteq X\), such that \(\phi_{o\ast}\mu=\nu\) and
    \[\phi_{o}\left(g.x\right)=g.\phi_{o}\left(x\right)\text{ for every }\left(g,x\right)\in G\times X_{o}.\]
    \item A concrete \(G\)-factor \(\phi_{o}\) is a {\bf concrete version} of a \(G\)-factor \(\phi\), if \(\phi=\phi_{o}\) on a \(\mu\)-conull set.
\end{itemize}
\end{defn}

\begin{prop}[Folklore]\label{prop:pwfactor}
Whenever \(G\) is an lcsc group, every \(G\)-factor admits a concrete version.
\end{prop}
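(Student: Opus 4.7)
The plan is a standard Fubini and Lusin--Novikov argument: detect the points $x\in X$ where $\phi$ behaves compatibly under $m_G$-almost every translation, and redefine $\phi$ there as the essential value of the Borel map $g\mapsto g^{-1}.\phi(g.x)$.

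First, I would consider the Borel set
\[N\coloneqq\{(g,x)\in G\times X:\phi(g.x)\neq g.\phi(x)\}.\]
By the a.e.-equivariance hypothesis, every $g$-slice $N_g\subseteq X$ is $\mu$-null, so by $\sigma$-finiteness of $m_G$ and Fubini $(m_G\otimes\mu)(N)=0$, whence
\[X_o^{0}\coloneqq\{x\in X:m_G(N^{x})=0\}\]
is a Borel $\mu$-conull subset of $X$. For every $x\in X_o^{0}$ the Borel map $g\mapsto g^{-1}.\phi(g.x)$ is $m_G$-a.e.\ equal to $\phi(x)$.

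Second, I would upgrade $X_o^{0}$ to a $G$-invariant set. Set $\psi(g,x)\coloneqq g^{-1}.\phi(g.x)$ and
\[X_o\coloneqq\{x\in X:\psi(\cdot,x)\text{ is }m_G\text{-essentially constant}\},\qquad\phi_o(x)\coloneqq\text{this essential constant, for }x\in X_o.\]
Since $X_o\supseteq X_o^{0}$, the set $X_o$ is $\mu$-conull. A direct substitution $h\mapsto hg$, which preserves $m_G$-null sets regardless of unimodularity, gives the identity $\psi(h,g.x)=g.\psi(hg,x)$; from this one reads off at once that $X_o$ is $G$-invariant and that $\phi_o(g.x)=g.\phi_o(x)$ for every $x\in X_o$ and every $g\in G$.

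The main technical point is the Borel measurability of $\phi_o$. I would invoke Lusin--Novikov (Theorem~\ref{thm:lusinnovi}) on the Borel set
\[P\coloneqq\{(x,w)\in X\times W:m_G(\{g\in G:\psi(g,x)\neq w\})=0\},\]
which is Borel because $(x,w)\mapsto m_G(\{g:\psi(g,x)\neq w\})$ is a Fubini integral of the Borel indicator of $\{(g,x,w):\psi(g,x)\neq w\}$. For $x\in X_o$ the $x$-fiber of $P$ is the singleton $\{\phi_o(x)\}$: two admissible $w,w^\prime$ must agree on some common $g$ in a set of positive $m_G$-measure, whence $w=g^{-1}.\phi(g.x)=w^\prime$. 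Lusin--Novikov then yields that $X_o=\operatorname{proj}_X(P)$ is Borel and produces a Borel section, which by uniqueness must coincide with $\phi_o$. Finally $\phi_o=\phi$ on the $\mu$-conull set $X_o^{0}$, so $\phi_{o\ast}\mu=\phi_{\ast}\mu=\nu$ and $\phi_o$ is a concrete version of $\phi$. The only (mild) obstacle is this Borel selection step, cleanly resolved by the singleton-fiber version of Lusin--Novikov.
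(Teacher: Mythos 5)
Your proposal is correct, and it takes a genuinely different route from the paper's proof. The paper also begins with the Fubini step (its set \(X^{o}\) is your \(X_o^{0}\)), but it then proves a partial equivariance statement --- \(\phi(h.x)=h.\phi(x)\) whenever both \(x\) and \(h.x\) lie in \(X^{o}\), obtained by intersecting the two conull slices \(\Sigma^{h.x}\) and \(\Sigma^{x}h^{-1}\) --- and invokes Zimmer's Lemma B.8 (resting on Varadarajan's compact model theorem and a uniformization theorem) to produce a Borel saturation \(X_o=G.X_o^{o}\) together with a Borel section \(\lambda:X_o\to G\) retracting into the good set; the concrete factor is then \(\phi_o(x)=\lambda(x)^{-1}.\phi(\lambda(x).x)\). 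You instead define \(\phi_o(x)\) as the \(m_G\)-essential value of \(g\mapsto g^{-1}.\phi(g.x)\), which makes \(G\)-invariance of the domain and strict equivariance immediate from the identity \(\psi(h,g.x)=g.\psi(hg,x)\) (correctly noting that right translation preserves \(m_G\)-null sets without unimodularity), and you recover Borel measurability from the singleton-fiber case of Lusin--Novikov applied to \(P\), whose Borelness follows from the measurability of \((x,w)\mapsto m_G(\{g:\psi(g,x)\neq w\})\) (the paper itself uses the analogous fact, \cite[Theorem (17.25)]{kechris2012classical}, elsewhere). What your approach buys is a choice-free, canonical \(\phi_o\) and independence from the compact-model machinery; what the paper's approach buys is an explicit description of the domain as the \(G\)-saturation of an almost-conull subset of \(X^{o}\), a device it reuses in related selection arguments. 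Both arguments are complete.
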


\begin{proof}
Let \(\phi:\left(X,\mu\right)\to\left(W,\nu\right)\) be a \(G\)-factor, and consider the Borel set
\[\Sigma\coloneqq \{\left(g,x\right)\in G\times X:\phi\left(g.x\right)=g.\phi \left(x\right)\}.\]
For \(x\in X\), consider its \(x\)-section, \(\Sigma^{x}=\left\{g\in G: \phi\left(g.x\right)=g.\phi\left(x\right)\right\} \), and by Fubini Theorem with the measure \(m_{G}\otimes\mu\), the set
\[X^{o}\coloneqq \left\{x\in X:\Sigma^{x}\text{ is }m_{G}\text{-conull}\right\}\]
is \(\mu\)-conull. We claim that
\begin{equation}\label{eq:phi}
\phi\left(h.x\right)=h.\phi\left(x\right)\text{ whenever }\left(h,x\right)\in G\times X^{o}\text{ satisfies }h.x\in X^{o}.
\end{equation}
Indeed, by the assumption both sets \(\Sigma^{h.x}\) and \(\Sigma^{x}h^{-1}\) are \(m_{G}\)-conull, and therefore their intersection is \(m_{G}\)-conull, and in particular nonempty. Then pick any \(g\in \Sigma^{h.x}\cap \Sigma^{x}h^{-1}\), and we have
\[g.\phi\left(h.x\right)=\phi\left(gh.x\right)=gh.\phi\left(x\right)\text{, hence }\phi\left(h.x\right)=h.\phi\left(x\right),\]
establishing \eqref{eq:phi}. Using~\cite[Lemma B.8]{Zimmer1984},\footnote{This lemma follows from Varadarajan's compact model theorem~\cite[Ch.~V, \S3, Theorem~5.7]{Varadarajan1968}, with a theorem attributed to Kallman~\cite[Theorem A.5]{Zimmer1984}, which is the uniformization theorem~\cite[Theorem (18.18)]{kechris2012classical}.} there is a Borel set \(X_{o}^{o}\subseteq X^{o}\) such that \(\mu\left(X^{o}\setminus X_{o}^{o}\right)=0\) and \(G.X_{o}^{o}\) is Borel, as well as a Borel map \(\lambda:G.X_{o}^{o}\to G\) such that \(\lambda\left(x\right).x\in X_{o}^{o}\) for every \(x\in G.X_{o}^{o}\) and \(\lambda\left(x\right)=e_{G}\) for every \(x\in X_{o}^{o}\). Put the Borel set
\[X_{o}\coloneqq G.X_{o}^{o},\]
and define the Borel map
\[\phi_{o}:X_{o}\longrightarrow W,\quad \phi_{o}\left(x\right)\coloneqq \lambda\left(x\right)^{-1}.\phi\left(\lambda\left(x\right).x\right),\quad x\in X_{o}.\]
Since \(X^{o}\) is \(\mu\)-conull then so is \(X_{o}^{o}\) and a fortiori \(X_{o}\). Since \(\lambda\mid_{X_{o}^{o}}\equiv e_{G}\) and \(X_{o}^{o}\subseteq X_{o}\) is \(\mu\)-conull, we have \(\phi_{o}\left(x\right)=\phi\left(x\right)\) for \(\mu\)-a.e. \(x\in X_{o}\). To see the equivariance of \(\phi_{o}\), let \(\left(g,x\right)\in G\times X_{o}\), and note that
\[\big(\lambda\left(g.x\right).g.\lambda\left(x\right)^{-1},\lambda\left(x\right).x\big)\in G\times X_{o}\subseteq G\times X^{o}\text{ satisfies }\lambda\left(g.x\right).g.\lambda\left(x\right)^{-1}.\lambda\left(x\right).x=\lambda\left(g.x\right).g.x\in X_{o}\subseteq X^{o},\]
and therefore, using \eqref{eq:phi} we obtain
\begin{align*}
\phi_{o}\left(g.x\right)	
&=\lambda\left(g.x\right)^{-1}.\phi\left(\lambda\left(g.x\right).g.x\right)\\
&=\lambda\left(g.x\right)^{-1}.\phi\big(\small[\lambda\left(g.x\right).g.\lambda\left(x\right)^{-1}\small].\left[\lambda\left(x\right).x\right]\big)\\
&=\lambda\left(g.x\right)^{-1}.\small[\lambda\left(g.x\right).g.\lambda\left(x\right)^{-1}\small].\phi\left(\lambda\left(x\right).x\right)\\
&=g.\lambda\left(x\right)^{-1}.\phi\left(\lambda\left(x\right).x\right)\\
&=g.\phi_{o}\left(x\right).\qedhere
\end{align*}
\end{proof}

We now define \(G\)-factors in the context of transverse \(G\)-spaces.

\begin{defn}
Let \(\left(X,\mu,Y\right)\) and \(\left(W,\nu,Z\right)\) be transverse \(G\)-spaces.
\begin{itemize}
    \item A {\bf transverse \(G\)-factor} \(\phi:\left(X,\mu,Y\right)\to\left(W,\nu,Z\right)\) is a \(G\)-factor \(\phi:\left(X,\mu\right)\to\left(W,\nu\right)\) such that
    \[Y_{x}=Z_{\phi\left(x\right)}\text{ for }\mu\text{-a.e. }x\in X.\]
    \item A {\bf concrete transverse \(G\)-factor} \(\phi_{o}:\left(X,\mu,Y\right)\to\left(W,\nu,Z\right)\) is a concrete \(G\)-factor \(\phi_{o}:\left(X,\mu\right)\to\left(W,\nu\right)\), defined on a \(\mu\)-conull \(G\)-invariant Borel set \(X_{o}\subseteq X\), and a cross section \(Y_{o}\subseteq X_{o}\), such that
    \[\left(Y_{o}\right)_{x}=Z_{\phi_{o}\left(x\right)}\text{ for every }x\in X_{o}.\]
    Observe that this property is equivalent to that
    \[Y_{o}=\phi_{o}^{-1}\left(Z\right).\]
    \item A concrete transverse \(G\)-factor \(\phi_{o}\) is a {\bf concrete version} of a transverse \(G\)-factor \(\phi\), if \(\phi=\phi_{o}\) on a \(\mu\)-conull set and \(Y_{o}\) is measure equivalent to \(Y\) (as in Proposition~\ref{prop:crossequiv}).
\end{itemize}
\end{defn}

\begin{prop}\label{prop:conctrans}
Every transverse \(G\)-factor admits a concrete version.
\end{prop}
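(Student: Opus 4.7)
The plan is to bootstrap from Proposition~\ref{prop:pwfactor}: first produce a concrete version of the underlying $G$-factor, then pull back the target cross section and, crucially, shrink the domain so that the pullback is an honest cross section pointwise, not merely almost surely.

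To start, apply Proposition~\ref{prop:pwfactor} to the underlying $G$-factor \(\phi\colon(X,\mu)\to(W,\nu)\) to obtain a concrete version \(\phi_{o}\colon X_{o}^{(0)}\to W\), defined on a \(\mu\)-conull \(G\)-invariant Borel set \(X_{o}^{(0)}\subseteq X\) with strict equivariance, and agreeing with \(\phi\) off a \(\mu\)-null set. Since \((W,\nu,Z)\) is a transverse \(G\)-space, the set \(W_{o}\coloneqq G.Z\) is Borel (by the Lusin--Novikov footnote in the definition of a locally integrable cross section), \(G\)-invariant, and \(\nu\)-conull; moreover, \(Z_{w}\) is nonempty and locally finite for every \(w\in W_{o}\). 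Put
\[
X_{o}\coloneqq X_{o}^{(0)}\cap\phi_{o}^{-1}\left(W_{o}\right)\quad\text{and}\quad Y_{o}\coloneqq\phi_{o}^{-1}\left(Z\right)\cap X_{o}.
\]
The set \(X_{o}\) is Borel; it is \(G\)-invariant because \(\phi_{o}\) is strictly \(G\)-equivariant on \(X_{o}^{(0)}\) and \(W_{o}\) is \(G\)-invariant; and it is \(\mu\)-conull because \(\phi_{o\ast}\mu=\nu\) and \(W_{o}\) is \(\nu\)-conull.

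Next, I check that \(Y_{o}\) is a cross section of \(X_{o}\) with the desired compatibility. For any \(x\in X_{o}\), using that \(X_{o}\) is \(G\)-invariant and that \(\phi_{o}\) is strictly equivariant on \(X_{o}\),
\[
\left(Y_{o}\right)_{x}=\left\{g\in G:g.x\in X_{o},\ \phi_{o}\left(g.x\right)\in Z\right\}=\left\{g\in G:g.\phi_{o}\left(x\right)\in Z\right\}=Z_{\phi_{o}\left(x\right)}.
\]
Since \(\phi_{o}\left(x\right)\in W_{o}=G.Z\), the right-hand side is nonempty and locally finite, so \(Y_{o}\) is indeed a cross section of \(X_{o}\), and the identity \(\left(Y_{o}\right)_{x}=Z_{\phi_{o}\left(x\right)}\) holds for every \(x\in X_{o}\); equivalently, \(Y_{o}=\phi_{o}^{-1}\left(Z\right)\) inside \(X_{o}\).

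Finally, I verify that \(\phi_{o}\) (with cross section \(Y_{o}\)) is a concrete version of \(\phi\). Intersecting the three \(\mu\)-conull sets \(\{x:\phi\left(x\right)=\phi_{o}\left(x\right)\}\), \(\{x:Y_{x}=Z_{\phi\left(x\right)}\}\), and \(X_{o}\), we get a \(\mu\)-conull set on which
\[
Y_{x}=Z_{\phi\left(x\right)}=Z_{\phi_{o}\left(x\right)}=\left(Y_{o}\right)_{x}.
\]
By Proposition~\ref{prop:crossequiv}, \(Y\) and \(Y_{o}\) are measure equivalent with respect to \(\mu\), completing the proof. The only subtle point worth flagging is the intermediate shrinking step \(X_{o}^{(0)}\cap\phi_{o}^{-1}\left(W_{o}\right)\): without it, \(\phi_{o}^{-1}\left(Z\right)\) might fail to have locally finite fibers at points \(x\) whose image \(\phi_{o}\left(x\right)\) lies off \(G.Z\), and the concreteness requirement (the identity of fibers holding for \emph{every} \(x\), not merely almost every) would be lost.
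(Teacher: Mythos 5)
Your proof is correct and follows essentially the same route as the paper's: apply Proposition~\ref{prop:pwfactor} to get a concrete \(G\)-factor, set \(Y_{o}=\phi_{o}^{-1}\left(Z\right)\cap X_{o}\), deduce \(\left(Y_{o}\right)_{x}=Z_{\phi_{o}\left(x\right)}\) from strict equivariance, and conclude measure equivalence of \(Y_{o}\) with \(Y\). The only difference is your additional shrinking of the domain to \(\phi_{o}^{-1}\left(G.Z\right)\) so that the fibers \(\left(Y_{o}\right)_{x}\) are nonempty and locally finite at every point rather than almost every point --- a prudent refinement that the paper leaves implicit and that does not alter the argument.
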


\begin{proof}
Let \(\phi:\left(X,\mu,Y\right)\to\left(W,\nu,Z\right)\) be a transverse \(G\)-factor. Using Proposition~\ref{prop:pwfactor}, pick a concrete version \(\phi_{o}\) of \(\phi\) as a mere \(G\)-factor, defined on a \(\mu\)-conull \(G\)-invariant set \(X_{o}\subseteq X\). Put \(Y_{o}\coloneqq{\phi_{o}}^{-1}\left(Z\right)\cap X_{o}\), and from the concreteness of \(\phi_{o}\) and the \(G\)-invariance of \(X_{o}\), it follows that \(\left(Y_{o}\right)_{x}=Z_{\phi_{o}\left(x\right)}\) for every \(x\in X_{o}\). In particular, since \(\phi\) is a transverse \(G\)-factor, it follows that \(\left(Y_{o}\right)_{x}=Z_{\phi_{o}\left(z\right)}=Z_{\phi\left(x\right)}=Y_{x}\) for \(\mu\)-a.e. \(x\in X\), and thus \(Y_{o}\) is measure equivalent to \(Y\). Therefore, \(\phi_{o}\) is a concrete version of \(\phi\) as a transverse \(G\)-factor.
\end{proof}

We now have a property that can be seen as functoriality of the transverse measure construction.

\begin{prop}\label{prop:factor}
Let \(\left(X,\mu,Y\right)\) and \(\left(W,\nu,Z\right)\) be transverse \(G\)-spaces. For every concrete transverse \(G\)-factor \(\phi_{o}:\left(X,\mu,Y\right)\to \left(W,\nu,Z\right)\) it holds that
\[\phi_{o\ast}\mu_{Y}=\nu_{Z}.\]
Therefore, if there exists a transverse \(G\)-factor \(\left(X,\mu,Y\right)\to \left(W,\nu,Z\right)\) then
\[\iota_{\mu}\left(Y\right)=\iota_{\nu}\left(Z\right).\] 
\end{prop}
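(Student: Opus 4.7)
The plan is to use the uniqueness part of the transverse correspondence (Proposition~\ref{prop:injcorres}), i.e. to show that the pushforward $\phi_{o\ast}\mu_{Y}$ satisfies the Campbell identity for $(W,\nu,Z)$, and then conclude by uniqueness that it must equal $\nu_{Z}$.

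First, I would replace $Y$ by $Y_{o}$: by the definition of \emph{concrete version} together with Proposition~\ref{prop:crossequiv}, we have $\mu_{Y}=\mu_{Y_{o}}$, so it suffices to show $\phi_{o\ast}\mu_{Y_{o}}=\nu_{Z}$. Next, given an arbitrary Borel $f:G\times Z\to\left[0,+\infty\right]$, I would pull it back along $\phi_{o}$ to a Borel function
\[\widetilde{f}:G\times Y_{o}\longrightarrow\left[0,+\infty\right],\quad \widetilde{f}\left(g,y\right)\coloneqq f\left(g,\phi_{o}\left(y\right)\right).\]
Using that $\phi_{o}$ is a \emph{concrete} $G$-factor (everywhere equivariant on $X_{o}$) and that $\left(Y_{o}\right)_{x}=Z_{\phi_{o}\left(x\right)}$ for every $x\in X_{o}$, I would compute the $X$-periodization of $\widetilde{f}$ relative to $Y_{o}$ and identify it with the $W$-periodization of $f$ relative to $Z$ composed with $\phi_{o}$, namely
\[\widetilde{f}_{X}\left(x\right)=\sum\nolimits_{g\in\left(Y_{o}\right)_{x}}f\left(g^{-1},g.\phi_{o}\left(x\right)\right)=f_{W}\left(\phi_{o}\left(x\right)\right)\quad\text{for all }x\in X_{o}.\]
This is the key algebraic step; it is routine but must be done carefully since it is the place where equivariance and the defining property of a concrete transverse $G$-factor actually enter.

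With this identity in hand, I would invoke Campbell theorem~\ref{prop:campbell} twice. On one side, applied to $(X,\mu,Y_{o})$ together with $\phi_{o\ast}\mu=\nu$,
\[m_{G}\otimes\phi_{o\ast}\mu_{Y_{o}}\left(f\right)=m_{G}\otimes\mu_{Y_{o}}\big(\widetilde{f}\big)=\mu\big(\widetilde{f}_{X}\big)=\mu\left(f_{W}\circ\phi_{o}\right)=\nu\left(f_{W}\right).\]
On the other side, Campbell theorem for $(W,\nu,Z)$ gives $m_{G}\otimes\nu_{Z}\left(f\right)=\nu\left(f_{W}\right)$. Since $f$ was arbitrary, Proposition~\ref{prop:injcorres} forces $\phi_{o\ast}\mu_{Y_{o}}=\nu_{Z}$, which is the claim.

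Finally, for the second assertion about intensities I would first use Proposition~\ref{prop:conctrans} to pass from an abstract transverse $G$-factor to a concrete one (so that the first part applies), and then simply take total masses: since pushforward preserves total mass and $\nu_{Z}$ is concentrated on $Z$,
\[\iota_{\mu}\left(Y\right)=\mu_{Y}\left(Y\right)=\phi_{o\ast}\mu_{Y}\left(W\right)=\nu_{Z}\left(W\right)=\nu_{Z}\left(Z\right)=\iota_{\nu}\left(Z\right).\]
I expect the only real subtlety to be the interchange between $Y$ and $Y_{o}$ and the precise use of ``concreteness'' to make $\phi_{o}$ everywhere equivariant on the relevant $\mu$-conull set; the rest is a mechanical application of Campbell theorem and the injectivity built into the transverse correspondence.
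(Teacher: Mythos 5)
Your proposal is correct and follows essentially the same route as the paper: pass to the concrete version, use measure equivalence to replace \(\mu_{Y}\) by \(\mu_{Y_{o}}\), pull test functions back along \(\phi_{o}\), identify the \(X\)-periodization of the pullback with \(f_{W}\circ\phi_{o}\) via concreteness and \(\left(Y_{o}\right)_{x}=Z_{\phi_{o}\left(x\right)}\), and apply Campbell's theorem on both sides. The only nitpick is your final appeal to Proposition~\ref{prop:injcorres}, which gives uniqueness of the measure \emph{upstairs} given the transverse measure rather than the converse; but the identity \(m_{G}\otimes\phi_{o\ast}\mu_{Y_{o}}=m_{G}\otimes\nu_{Z}\) already forces \(\phi_{o\ast}\mu_{Y_{o}}=\nu_{Z}\) by specializing to product test functions \(f\left(g,z\right)=w\left(g\right)\mathbf{1}_{B}\left(z\right)\) with \(m_{G}\left(w\right)=1\), which is exactly the class of test functions the paper uses from the outset.
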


\begin{proof}
Let \(\phi_{o}:X_{o}\to W\) be a concrete transverse \(G\)-factor, defined on a \(\mu\)-conull \(G\)-invariant Borel set \(X_{o}\subseteq X\) and a cross section \(Y_{o}\) measure equivalent to \(Y\). Fix a Borel function \(w:G\to\left[0,+\infty\right]\) with \(m_{G}\left(w\right)=1\). For an arbitrary Borel function, \(f:Z\to\left[0,+\infty\right]\), define the Borel functions
\[F:G\times Z\longrightarrow\left[0,+\infty\right],\quad F\left(g,z\right)\coloneqq w\left(g\right)\cdot f\left(z\right),\]
and
\[F^{\prime}:G\times Y\to\left[0,+\infty\right],\quad F^{\prime}\left(g,y\right)\coloneqq F\left(g,\phi\left(y\right)\right)=w\left(g\right)\cdot f\left(\phi\left(y\right)\right).\]
Then since \(\phi_{o}\) is concrete, for every \(x\in X_{o}\) it holds that
\[F_{X}^{\prime}\left(x\right)=\sum\nolimits_{g\in Y_{x}}w\left(g^{-1}\right)\cdot F\left(g.\phi_{o}\left(x\right)\right)=\sum\nolimits_{g\in Z_{\phi_{o}\left(x\right)}}w\left(g^{-1}\right)\cdot F\left(g.\phi_{o}\left(x\right)\right)=F_{W}\left(\phi_{o}\left(x\right)\right).\]
Applying Campbell theorem~\ref{prop:campbell} twice and using \(m_{G}\left(w\right)=1\), we obtain that
\[\phi_{o\ast}\mu_{Y_{o}}\left(f\right)=m_{G}\otimes\mu_{Y_{o}}\left(F^{\prime}\right)=\mu\big(F_{X}^{\prime}\big)=\phi_{o\ast}\mu\left(F_{W}\right)=\nu\left(F_{W}\right)=m_{G}\otimes\nu_{Z}\left(F\right)=\nu_{Z}\left(f\right).\]
Since \(Y_{o}\) and \(Y\) are measure equivalent, \(\mu_{Y}=\mu_{Y_{o}}\) and therefore \(\phi_{o\ast}\mu_{Y}=\phi_{o\ast}\mu_{Y_{o}}=\nu_{Z}\).

Finally, if there exists a transverse \(G\)-factor \(\left(X,\mu,Y\right)\to \left(W,\nu,Z\right)\), then by Proposition~\ref{prop:conctrans} there exists a concrete transverse \(G\)-factor \(\phi_{o}:\left(X,\mu,Y\right)\to\left(W,\nu,Z\right)\), and thus
\[\iota_{\nu}\left(Z\right)	=\nu_{Z}\left(Z\right)=\phi_{o\ast}\mu_{Y}\left(Z\right)=\phi_{o\ast}\mu_{Y_{o}}\left(Z\right)=\mu_{Y_{o}}\left(\phi_{o}^{-1}\left(Z\right)\right)=\mu_{Y_{o}}\left(Y_{o}\right)=\mu_{Y}\left(Y\right)=\iota_{\mu}\left(Y\right).\qedhere\]
\end{proof}

\section{Intersection spaces}\label{sct:intsp}

Intersections spaces were introduced by the second author, Hartnick and Karasik~\cite{bjorklund2025int} for separated cross sections. Using the transverse correspondence~\ref{thm:corresp}, it extends to transverse \(G\)-spaces as follows.

Let \(r\in\mathbb{N}\) be arbitrary and fixed, and let \(X\) be a Borel \(G\)-space with a cross section \(Y\). Denote by \(X^{\otimes r}\) and \(Y^{\otimes r}\) the \(r\)-fold product of \(X\) and \(Y\), respectively, and consider the diagonal action of \(G\) on \(X^{\otimes r}\), that we denote by
\[g\diag\left(x_{1},\dotsc,x_{r}\right)=\left(g.x_{1},\dotsc,g.x_{r}\right).\]
The {\bf intersection space} of order \(r\) associated with \(X\) is the Borel \(G\)-space
\[Y^{\left[r\right]}\coloneqq G\diag Y^{\otimes r}=\{\left(g.y_{1},\dotsc,g.y_{r}\right):g\in G,\,\left(y_{1},\dotsc,y_{r}\right)\in Y^{\otimes r}\}\subseteq X^{\otimes r}.\]
Being a \(G\)-invariant subsets of the Borel \(G\)-space \(X^{\otimes r}\) in the diagonal action of \(G\), the intersection space \(Y^{\left[r\right]}\) becomes a Borel \(G\)-spaces on its own right. Furthermore, observe that it admits a natural cross section, namely
\[Y^{\otimes r}\subseteq Y^{\left[r\right]},\]
whose return times sets are of the form
\[Y_{g\diag\left(y_{1},\dotsc,y_{r}\right)}^{\otimes r}=Y_{g.y_{1}}\cap\dotsm\cap Y_{g.y_{r}}=\left(Y_{y_{1}}\cap\dotsm\cap Y_{y_{r}}\right)g^{-1}.\]

\begin{defn}\label{def:intspace1}
The {\bf intersection space} of order \(r\in\mathbb{N}\) of a transverse \(G\)-space \(\left(X,\mu,Y\right)\), is the measure preserving \(G\)-space
\[\big(Y^{\left[r\right]},\mu^{\left[r\right]}\big),\]
where \(\mu^{\left[r\right]}\) is specified by that the transverse measure associated with \(\left(Y^{\left[r\right]},\mu^{\left[r\right]},Y^{\otimes r}\right)\) is \(\mu_{Y}^{\otimes r}\); in the notations of the transverse correspondence~\ref{thm:corresp} or its inverse \eqref{eq:invcorr},
\[\mu^{\left[r\right]}_{Y^{\otimes r}}=\mu_{Y}^{\otimes r}\text{ or }\mu^{\left[r\right]}=\left(\mu_{Y}^{\otimes r}\right)^{{\scriptscriptstyle Y^{\left[r\right]}}}.\]
\end{defn}

To verify that \(\mu^{\left[r\right]}\in\mathcal{M}^{G}\left(Y^{\left[r\right]}\right)\) is well-defined, observe that from \(\mu_{Y}\in\mathcal{M}^{E_{G}^{Y}}\left(Y\right)\) it follows that \(\mu_{Y}^{\otimes r}\in\mathcal{M}^{E_{G}^{Y^{\otimes r}}}\left(Y^{\otimes r}\right)\), where \(E_{G}^{Y^{\otimes r}}\coloneqq E_{G}^{X^{\otimes r}}\cap\left(Y^{\otimes r}\times Y^{\otimes r}\right)\) in the diagonal action of \(G\), and therefore the transverse correspondence~\ref{thm:corresp} is applicable.

We now give another description of intersection spaces for cross section satisfying some extra separation property. This will be particularly useful in proving Theorem~\ref{thm:finiteiv} as well as in our subsequent work~\cite{AvBjCuII}. This characterization was proved in~\cite[\S8.3]{bjorklund2025int} in the separated case.

Let \(\left(X,\mu,Y\right)\) be a transverse \(G\)-space, and assume it satisfies the following condition:
\begin{equation}\label{eq:doubs}\tag{\(\bigstar\)}
Y_{x_{1}}^{-1}Y_{x_{2}}\text{ is locally finite for all }x_{1},x_{2}\in X.
\end{equation}
Observe that a sufficient condition for \eqref{eq:doubs} to hold is that \(\Lambda_{Y}^{2}\) is locally finite, since for all \(x_{1},x_{2}\in X\) one has that \(Y_{x_{1}}^{-1}Y_{x_{2}}\subseteq g_{1}\Lambda_{Y}^{2}g_{2}^{-1}\)  for any choice of \(g_{1}\in Y_{x_{1}}\) and \(g_{2}\in Y_{x_{2}}\). In particular when \(\Lambda_{Y}\) is uniformly discrete, which is equivalent to that \(e_{G}\in\Lambda_{Y}^{2}\) is an isolated point.

For \(r\in\mathbb{N}\), define the Borel \(G\)-space
\[\overline{Y}^{\left[r\right]}\coloneqq X\times Y^{\left[r-1\right]},\]
(note that \(\overline{Y}^{\left[2\right]}=X^{\otimes 2}\)), where \(G\) acts only on the first coordinate:
\[g.\left(x,\vec{z}\right)=\left(g.x,\vec{z}\right).\]
Then \(Y^{\left[r\right]}\) is a cross section for \(\overline{Y}^{\left[r\right]}\) in this action, and the reader may directly verify that the return times sets are of the form
\[Y_{\left(x,\vec{z}\right)}^{\left[r\right]}=\left(Y_{z_{r}}^{-1}\cap\dotsm\cap Y_{z_{2}}^{-1}\right)Y_{x}\text{ for all }\left(x,\vec{z}\right)=\left(x,z_{2},\dotsc,z_{r}\right)\in\overline{Y}^{\left[r\right]}.\]
Define on \(\overline{Y}^{\left[r\right]}\) the measure
\[\overline{\mu}^{\left[r\right]}\coloneqq\mu\otimes\mu^{\left[r-1\right]},\]
(note that \(\overline{\mu}^{\left[2\right]}=\mu^{\otimes 2}\)), and it is clear that \(\overline{\mu}^{\left[r\right]}\) is invariant under the given action. The following identity was proved in the separated case as part of~\cite[Theorem 7.2]{bjorklund2025int}.

\begin{prop}\label{prop:intmeasiden}
For every transverse \(G\)-space \(\left(X,\mu,Y\right)\) satisfying \eqref{eq:doubs}, the following identity holds:
\[\mu^{\left[r\right]}=\overline{\mu}_{Y^{\left[r\right]}}^{\left[r\right]};\]
that is, the intersection measure \(\mu^{\left[r\right]}\) is the transverse measure of the transverse \(G\)-space \(\big(\overline{Y}^{\left[r\right]},\overline{\mu}^{\left[r\right]},Y^{\left[r\right]}\big)\).
\end{prop}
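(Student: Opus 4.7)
The plan is to invoke the uniqueness clause of Proposition~\ref{prop:injcorres}: since $\mu^{\left[r\right]}$ is by definition the unique $G$-invariant measure on $Y^{\left[r\right]}$ (for the diagonal action) whose Campbell partner on $Y^{\otimes r}$ is $\mu_{Y}^{\otimes r}$, it will suffice to show that $\nu\coloneqq\overline{\mu}^{\left[r\right]}_{Y^{\left[r\right]}}$ satisfies the Campbell identity
\[
\nu\bigl(H_{Y^{\left[r\right]}}\bigr)\;=\;m_{G}\otimes\mu_{Y}^{\otimes r}\left(H\right)
\]
for every Borel $H\colon G\times Y^{\otimes r}\to\left[0,+\infty\right]$, where the $Y^{\left[r\right]}$-periodization refers to the diagonal $G$-action on $Y^{\left[r\right]}$ with cross section $Y^{\otimes r}$. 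Condition~\eqref{eq:doubs} enters only to guarantee that the return-times set $Y^{\left[r\right]}_{\left(x,\vec{z}\right)}=Y_{\vec{z}}^{-1}Y_{x}$ is locally finite, so that $Y^{\left[r\right]}$ is genuinely a cross section of $\overline{Y}^{\left[r\right]}$ in the first-coordinate $G$-action and $\nu$ is well-defined.

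For the key computation I would first expand $\nu\bigl(H_{Y^{\left[r\right]}}\bigr)$ directly from the definition of the transverse measure, then unfold the inner periodization $H_{Y^{\left[r\right]}}\left(g.x,\vec{z}\right)=\sum_{k\in Y_{g.x}\cap Y_{\vec{z}}}H\left(k^{-1},k.g.x,k\diag\vec{z}\right)$, and use $Y_{g.x}=Y_{x}g^{-1}$ to rewrite the outer sum over $g\in Y_{\vec{z}}^{-1}Y_{x}$ together with the inner constraint $k\in Y_{\vec{z}},\,kg\in Y_{x}$. The decisive observation is that the map $\left(g,k\right)\mapsto\left(k',k''\right)\coloneqq\left(kg,k\right)$ is a bijection from this index set onto $Y_{x}\times Y_{\vec{z}}$ (with inverse $\left(k',k''\right)\mapsto\left(k''^{-1}k',k''\right)$), decoupling the double sum as $\sum_{k'\in Y_{x}}\sum_{k''\in Y_{\vec{z}}}$ with integrand $w\left(k'^{-1}k''\right)\,H\left(k''^{-1},k'.x,k''\diag\vec{z}\right)$.

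With the sums decoupled, I would apply Campbell theorem~\ref{prop:campbell} twice: first to the transverse $G$-space $\left(Y^{\left[r-1\right]},\mu^{\left[r-1\right]},Y^{\otimes(r-1)}\right)$ under the diagonal action, recognizing $\sum_{k''\in Y_{\vec{z}}}w\left(k'^{-1}k''\right)H\left(k''^{-1},k'.x,k''\diag\vec{z}\right)$ as a $Y^{\left[r-1\right]}$-periodization and invoking $\mu^{\left[r-1\right]}_{Y^{\otimes(r-1)}}=\mu_{Y}^{\otimes(r-1)}$ from Definition~\ref{def:intspace1} to convert the $d\mu^{\left[r-1\right]}\left(\vec{z}\right)$-integration into an $m_{G}\otimes\mu_{Y}^{\otimes(r-1)}$-integration; and then, after a Fubini switch, a second application of Campbell theorem to $\left(X,\mu,Y\right)$ to convert the sum over $k'\in Y_{x}$ paired with $d\mu\left(x\right)$ into an $m_{G}\otimes\mu_{Y}$-integration. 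Unimodularity of $G$ collapses the residual $m_{G}$-integrations of the weight $w$ to $m_{G}\left(w\right)=1$, and what remains is exactly $m_{G}\otimes\mu_{Y}^{\otimes r}\left(H\right)$. The main obstacle is the opening manoeuvre: correctly identifying $Y^{\left[r\right]}_{\left(x,\vec{z}\right)}$, unfolding the two nested periodizations consistently, and verifying the bijective re-parametrization of the resulting double sum; once that is set up, the rest is routine bookkeeping with Fubini and two Campbell invocations.
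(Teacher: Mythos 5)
Your proposal is correct and follows essentially the same route as the paper's proof: reduce to the Campbell identity \(m_{G}\otimes\mu_{Y}^{\otimes r}\left(H\right)=\overline{\mu}^{\left[r\right]}_{Y^{\left[r\right]}}\bigl(H_{Y^{\left[r\right]}}\bigr)\) via the uniqueness clause of Proposition~\ref{prop:injcorres}, decouple the two nested periodizations with the bijection \(\left(g,k\right)\mapsto\left(kg,k\right)\) onto \(Y_{x}\times Y_{\vec{z}}\), and finish with Fubini, Campbell's theorem for the order-\(\left(r-1\right)\) intersection space, and the definition of \(\mu_{Y}\). The only slip is the order of the weight's argument --- it should be \(w\bigl(\left(k''\right)^{-1}k'\bigr)\), the weight of the actual return time \(g=\left(k''\right)^{-1}k'\) of \(\left(x,\vec{z}\right)\) --- but this is immaterial, since by unimodularity either version of the weight still integrates to \(1\) in the relevant variable.
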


\begin{figure}[h]
\captionsetup{font={scriptsize,stretch=1}}
\[
\xymatrix@C=3.8em{
&  & \big(\overline{Y}^{\left[r\right]},\overline{\mu}^{\left[r\right]},Y^{\left[r\right]}\big)\ar@{<-->}[d]\\
\big(Y^{\left[r\right]},\left(\mu_{Y}^{\otimes r}\right)^{Y^{\left[r\right]}},Y^{\otimes r}\big)\ar@{<-->}[d]\ar@{=}[r]^{\qquad\qquad\mathrm{Def}\,\ref{def:intspace1}} & \mu^{\left[r\right]}\ar@{=}[r]^{\qquad\quad\mathrm{Prop}\,\ref{prop:intmeasiden}\qquad\qquad\quad} & \big(Y^{\left[r\right]},\overline{\mu}^{\left[r\right]}_{Y^{\left[r\right]}}\big)\\
\left(Y^{\otimes r},\mu_{Y}^{\otimes r}\right)
}
\]
\caption[.]{The dashed arrows are formed by the transverse correspondence~\ref{thm:corresp}, with the upper space being transverse \(G\)-space and the lower being the associated transverse measure.}
\label{fig:intmeas}
\end{figure}

\begin{proof}
The key identity we need is that for every Borel function \(f:G\times Y^{\otimes r}\to\left[0,+\infty\right]\),
\begin{equation}\label{eq:transiden}
m_{G}\otimes\mu_{Y}^{\otimes r}\left(f\right)=\overline{\mu}_{Y^{\left[r\right]}}^{\left[r\right]}\left(f_{Y^{\left[r\right]}}\right),
\end{equation}
where \(f_{Y^{\left[r\right]}}\) denotes the \(Y^{\left[r\right]}\)-periodization of \(f\) with respect to the cross section \(Y^{\otimes r}\), namely
\[f_{Y^{\left[r\right]}}\left(\vec{z}\right)=\sum\nolimits_{h\in Y_{z_{1}}\cap\dotsm\cap Y_{z_{r}}}f\left(h^{-1},h\diag\vec{z}\right),\quad\vec{z}\in Y^{\left[r\right]}.\]
Once we establish \eqref{eq:transiden}, then in light of Campbell theorem~\ref{prop:campbell} we can deduce that \(\overline{\mu}_{Y^{\left[r\right]}}^{\left[r\right]}\) satisfies the property \(\big(\overline{\mu}_{Y^{\left[r\right]}}^{\left[r\right]}\big)_{Y^{\otimes r}}=\mu_{Y}^{\otimes r}\), which is the defining property of \(\mu^{\left[r\right]}\), and thus \(\overline{\mu}_{Y^{\left[r\right]}}^{\left[r\right]}=\mu^{\left[r\right]}\). Therefore, for the rest of the proof we will establish the identity \eqref{eq:transiden}.

Fix a Borel function \(w:G\to\left[0,+\infty\right]\) with \(m_{G}\left(w\right)=1\). For every \(\left(x,\vec{z}\right)=\left(x,z_{2},\dotsc,z_{r}\right)\in\overline{Y}^{\left[r\right]}\), we note the following identity:
\begin{equation}\label{eq:mididen}
\begin{aligned}
&\sum\nolimits_{g\in Y_{\left(x,\vec{z}\right)}^{\left[r\right]}}f_{Y^{\left[r\right]}}\left(g.x,z_{2},\dotsc,z_{r}\right)w\left(g\right)\\
&=\sum\nolimits_{g\in\left(Y_{z_{r}}^{-1}\cap\dotsm\cap Y_{z_{2}}^{-1}\right)Y_{x}}\sum\nolimits_{h\in Y_{g.x}\cap Y_{z_{2}}\cap\dotsm\cap Y_{z_{r}}}f\left(h^{-1},h\diag\left(g.x,z_{2},\dotsc,z_{r}\right)\right)w\left(g\right)\\
\left(\mathrm{i}\right)&=\sum\nolimits_{g_{1}\in Y_{x}}\sum\nolimits_{g_{2}\in Y_{z_{2}}\cap\dotsm\cap Y_{z_{r}}}f\left(g_{2}^{-1},\left(g_{1}.x,g_{2}\diag\vec{z}\right)\right)w\left(g_{2}^{-1}g_{1}\right)\\
\left(\mathrm{ii}\right)&\coloneqq\sum\nolimits_{g_{1}\in Y_{x}}f_{Y^{\left[r-1\right]}}^{\left(g_{1},x\right)}\left(\vec{z}\right),
\end{aligned}
\end{equation}
where \(\left(\mathrm{i}\right)\) holds since \(\left(g_{1},g_{2}\right)\mapsto\left(g_{2}g_{1},g_{2}\right)\) forms a bijection of the set
\[\left\{\left(g_{1},g_{2}\right)\in G\times G:g_{1}\in\left(Y_{z_{r}}^{-1}\cap\dotsm\cap Y_{z_{2}}^{-1}\right)Y_{x},\,g_{2}\in Y_{g_{1}.x}\cap Y_{z_{2}}\cap\dotsm\cap Y_{z_{r}}\right\}\]
onto the set
\[Y_{x}\times\left(Y_{z_{2}}\cap\dotsm\cap Y_{z_{r}}\right);\]
and in \(\left(\mathrm{ii}\right)\) we have defined the family of Borel functions with parameter \(\left(g_{1},x\right)\), \(g_{1}\in Y_{x}\), by
\[f^{\left(g_{1},x\right)}:G\times Y^{\otimes\left(r-1\right)}\longrightarrow\left[0,+\infty\right],\quad f^{\left(g_{1},x\right)}\left(g_{2},\vec{y}\right)\coloneqq f\left(g_{2},\left(g_{1}.x,\vec{y}\right)\right)w\left(g_{2}g_{1}\right),\]
whose \(Y^{\left[r-1\right]}\)-periodization with respect to the cross section \(Y^{\otimes\left(r-1\right)}\) of the Borel \(G\)-space \(Y^{\left[r-1\right]}\) with the diagonal action is indeed
\begin{align*}
f_{Y^{\left[r-1\right]}}^{\left(g_{1},x\right)}\left(\vec{z}\right)	
&=\sum\nolimits_{g_{2}\in Y_{z_{2}}\cap\dotsm\cap Y_{z_{r}}}f^{\left(g_{1},x\right)}\left(g_{2}^{-1},g_{2}\diag\vec{z}\right)\\
&=\sum\nolimits_{g_{2}\in Y_{z_{2}}\cap\dotsm\cap Y_{z_{r}}}f\left(g_{2}^{-1},\left(g_{1}.x,g_{2}\diag\vec{z}\right)\right)w\left(g_{2}^{-1}g_{1}\right).
\end{align*}
Now by Definition~\ref{dfn:transdef} of the transverse measure for the transverse \(G\)-space \(\big(\overline{Y}^{\left[r\right]},\overline{\mu}^{\left[r\right]},Y^{\left[r\right]}\big)\), and using \eqref{eq:mididen} and Fubini's theorem for \(\overline{\mu}^{\left[r\right]}=\mu\otimes\mu^{\left[r-1\right]}\), we obtain
\begin{align*}
\overline{\mu}_{Y^{\left[r\right]}}^{\left[r\right]}\big(f_{Y^{\left[r\right]}}\big)
&=\int_{\overline{Y}^{\left[r\right]}}\sum\nolimits_{g\in Y_{\left(x,\vec{z}\right)}^{\left[r\right]}}f_{Y^{\left[r\right]}}\left(g.x,\vec{z}\right)w\left(g\right)d\overline{\mu}^{\left[r\right]}\left(x,\vec{z}\right)\\
&=\int_{\overline{Y}^{\left[r\right]}}\sum\nolimits_{g_{1}\in Y_{x}}f_{Y^{\left[r-1\right]}}^{\left(g_{1},x\right)}\left(\vec{z}\right)d\overline{\mu}^{\left[r\right]}\left(x,\vec{z}\right)\\
&=\int_{X}\sum\nolimits_{g_{1}\in Y_{x}}\Big[\int_{Y^{\left[r-1\right]}}f_{Y^{\left[r-1\right]}}^{\left(g_{1},x\right)}\left(\vec{z}\right)d\mu^{\left[r-1\right]}\left(\vec{z}\right)\Big]d\mu\left(x\right).
\end{align*}
For every \(x\in X\) and \(g_{1}\in Y_{x}\), using Campbell theorem~\ref{prop:campbell} and that \(\mu_{Y^{\otimes\left(r-1\right)}}^{\left[r-1\right]}=\mu_{Y}^{\otimes\left(r-1\right)}\), we get
\[\int_{Y^{\left[r-1\right]}}f_{Y^{\left[r-1\right]}}^{\left(g_{1},x\right)}\left(\vec{z}\right)d\mu^{\left[r-1\right]}\left(\vec{z}\right)=m_{G}\otimes\mu_{Y}^{\otimes\left(r-1\right)}\big(f^{\left(g_{1},x\right)}\big),\]
and therefore, summing over \(g_{1}\in Y_{x}\) and integrating against \(d\mu\left(x\right)\), using the definition of \(f^{\left(g_{1},x\right)}\) as well as Fubini's theorem for \(m_{G}\otimes\mu_{Y}^{\otimes\left(r-1\right)}\otimes\mu\), we obtain
\begin{align*}
\overline{\mu}_{Y^{\left[r\right]}}^{\left[r\right]}\left(f_{Y^{\left[r\right]}}\right)
&=\int_{X}\sum\nolimits_{g_{1}\in Y_{x}}m_{G}\otimes\mu_{Y}^{\otimes\left(r-1\right)}\left(f^{\left(g_{1},x\right)}\right)d\mu\left(x\right)\\
&=\int_{G}\int_{Y^{\otimes\left(r-1\right)}}\Big[\int_{X}\sum\nolimits_{g_{1}\in Y_{x}}f\left(g_{2},\left(g_{1}.x,\vec{y}\right)\right)w\left(g_{2}g_{1}\right)d\mu\left(x\right)\Big]dm_{G}\left(g_{2}\right)d\mu_{Y}^{\otimes\left(r-1\right)}\left(\vec{y}\right)\\
\left(\ast\right)&=\int_{G}\int_{Y^{\otimes\left(r-1\right)}}\Big[\int_{Y}f\left(g_{2},\left(y_{1},\vec{y}\right)\right)d\mu_{Y}\left(y_{1}\right)\Big]dm_{G}\left(g_{2}\right)d\mu_{Y}^{\otimes\left(r-1\right)}\left(\vec{y}\right)\\
&=m_{G}\otimes\mu_{Y}^{\otimes r}\left(f\right),
\end{align*}
where \(\left(\ast\right)\) is by Definition~\ref{dfn:transdef} of \(\mu_{Y}\), applied to each of the functions \(Y\to\left[0,+\infty\right]\), \(y_{1}\mapsto f\left(g_{2},\left(y_{1},\vec{y}\right)\right)\), with fixed \(g_{2}\in G\) and \(\vec{y}\in Y^{\otimes\left(r-1\right)}\) (using \(\int_{G}w\left(g_{2}g_{1}\right)dm_{G}\left(g_{1}\right)=1\) for every \(g_{2}\in G\) by unimodularity). We thus obtained the desired identity \eqref{eq:transiden}, which completes the proof.
\end{proof}

\section{The intersection covolume and a higher order Kac's lemma}\label{sct:intvol}

Now we came to define the intersection covolume. Let \(G\) be an lcsc unimodular group and \(\left(X,\mu,Y\right)\) a transverse \(G\)-space. The {\bf intersection covolume} of \(\left(X,\mu,Y\right)\) is the quantities
\[I_{\mu}^{r}\left(Y\right)\coloneqq \mu^{\left[r\right]}\big(Y^{\left[r\right]}\big)\in\left(0,+\infty\right],\quad r\in\mathbb{N},\]
where \(\left(Y^{\left[r\right]},\mu^{\left[r\right]}\right)\) is the intersection space of order \(r\). Thus, \(I_{\mu}^{1}\left(Y\right)=\mu\left(X\right)=1\) and \(I_{\mu}^{2}\left(Y\right)=I_{\mu}\left(Y\right)\).

\smallskip

A basic property of the intensity and the intersection covolume is monotonicity:

\begin{prop}\label{prop:mono}
Let \(X\) be a Borel \(G\)-space with cross sections \(Y\subseteq Z\). Then for every \(\mu\in\mathcal{M}^{G}\left(X\right)\),
\[\iota_{\mu}\left(Y\right)\leq\iota_{\mu}\left(Z\right)\text{ and }I_{\mu}^{r}\left(Y\right)\leq I_{\mu}^{r}\left(Z\right)\text{ for all }r\in\mathbb{N}.\]
\end{prop}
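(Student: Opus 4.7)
The plan splits naturally into an easy intensity step and a more structural step for the higher order intersection covolume, both resting on the same observation about how transverse measures behave under shrinking the cross section.

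First I would prove the auxiliary identity
\[\mu_{Y}=\mu_{Z}\!\mid_{Y}\]
as measures on $Y$. This is a one-line calculation from Definition~\ref{dfn:transdef}: for any Borel $A\subseteq Y$, viewed as a Borel subset of $Z$,
\[\mu_{Z}\left(A\right)=\int_{X}\sum\nolimits_{g\in Z_{x}}\mathbf{1}_{A}\left(g.x\right)w\left(g\right)d\mu\left(x\right)=\int_{X}\sum\nolimits_{g\in Y_{x}}\mathbf{1}_{A}\left(g.x\right)w\left(g\right)d\mu\left(x\right)=\mu_{Y}\left(A\right),\]
since $g.x\in A$ forces $g\in Y_{x}\subseteq Z_{x}$. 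Taking $A=Y$ gives the first inequality: $\iota_{\mu}\left(Y\right)=\mu_{Z}\left(Y\right)\leq\mu_{Z}\left(Z\right)=\iota_{\mu}\left(Z\right)$. Tensorizing the identity also yields $\mu_{Y}^{\otimes r}=\mu_{Z}^{\otimes r}\!\mid_{Y^{\otimes r}}$, which is what will feed the second part.

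For the intersection covolume, write $\mu^{\left[r\right]}_{Y}$ and $\mu^{\left[r\right]}_{Z}$ for the intersection measures of order $r$ attached to $Y$ and $Z$, supported on the $G$-invariant Borel sets $Y^{\left[r\right]}\subseteq Z^{\left[r\right]}\subseteq X^{\otimes r}$. I would reduce the desired inequality to the identity
\[\mu^{\left[r\right]}_{Y}=\mu^{\left[r\right]}_{Z}\!\mid_{Y^{\left[r\right]}},\]
from which $I_{\mu}^{r}\left(Y\right)=\mu^{\left[r\right]}_{Z}\left(Y^{\left[r\right]}\right)\leq\mu^{\left[r\right]}_{Z}\left(Z^{\left[r\right]}\right)=I_{\mu}^{r}\left(Z\right)$ is immediate. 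The restriction $\mu^{\left[r\right]}_{Z}\!\mid_{Y^{\left[r\right]}}$ is $G$-invariant and $Y^{\otimes r}$ is a cross section of $Y^{\left[r\right]}$ by definition, so by the uniqueness part of the transverse correspondence (Proposition~\ref{prop:injcorres}) it suffices to show that its transverse measure with respect to $Y^{\otimes r}$ equals $\mu_{Y}^{\otimes r}$; the latter is the defining property of $\mu^{\left[r\right]}_{Y}$ (Definition~\ref{def:intspace1}).

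For this last computation I would take an arbitrary Borel $f:Y^{\otimes r}\to\left[0,+\infty\right]$ and extend it by zero to a Borel function $\tilde{f}:Z^{\otimes r}\to\left[0,+\infty\right]$. For $\vec{x}\in Z^{\left[r\right]}\setminus Y^{\left[r\right]}$ no point of the orbit of $\vec x$ lies in $Y^{\otimes r}$, so $Y^{\otimes r}_{\vec{x}}=\emptyset$; and whenever $g.\vec{x}\in Y^{\otimes r}$ we automatically have $g\in Y^{\otimes r}_{\vec{x}}\subseteq Z^{\otimes r}_{\vec{x}}$. Hence
\[\bigl(\mu^{\left[r\right]}_{Z}\!\mid_{Y^{\left[r\right]}}\bigr)_{Y^{\otimes r}}\left(f\right)=\int_{Z^{\left[r\right]}}\sum\nolimits_{g\in Z^{\otimes r}_{\vec{x}}}\tilde{f}\left(g\diag\vec{x}\right)w\left(g\right)d\mu^{\left[r\right]}_{Z}\left(\vec{x}\right)=\bigl(\mu^{\left[r\right]}_{Z}\bigr)_{Z^{\otimes r}}\!\left(\tilde{f}\right)=\mu_{Z}^{\otimes r}\bigl(\tilde f\bigr)=\mu_{Y}^{\otimes r}\left(f\right),\]
using the defining property of $\mu^{\left[r\right]}_{Z}$ in the middle and the tensorized form of the first paragraph's identity at the end.

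The main conceptual point is not difficult; the only real subtlety is remembering that intersection measures are characterized by their transverse measures on their respective diagonal cross sections, so the natural thing to check is that the restriction of the bigger intersection measure has the right transverse measure. Once this uniqueness-via-transverse-correspondence viewpoint is adopted, the argument is a short calculation, and the monotonicity for intersection covolumes is on the same footing as the monotonicity of intensities.
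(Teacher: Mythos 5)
Your argument is correct, and for the intersection covolume it takes a genuinely different route from the paper. The paper proves the second inequality by invoking Lemma~\ref{lem:borpartmon} to produce compatible Borel partitions of unity \(\rho^{Y}\) and \(\rho^{Z}\) for the nested cross sections, writing both intersection measures through the explicit integral formula of the inverse transverse correspondence \eqref{eq:invcorr}, and then comparing the two integrals term by term (using \(\mu_{Y}^{\otimes r}\leq\mu_{Z}^{\otimes r}\) on \(Y^{\otimes r}\) and \(\mathbf{1}_{Y^{\left[r\right]}}\leq\mathbf{1}_{Z^{\left[r\right]}}\)). You instead prove the sharper structural identity \(\mu_{Y}^{\left[r\right]}=\mu_{Z}^{\left[r\right]}\mid_{Y^{\left[r\right]}}\): you observe that the restriction of \(\mu_{Z}^{\left[r\right]}\) to the \(G\)-invariant set \(Y^{\left[r\right]}\) is a \(G\)-invariant \(\sigma\)-finite measure whose transverse measure with respect to \(Y^{\otimes r}\) is \(\mu_{Y}^{\otimes r}\) (your zero-extension computation is sound, since the orbit of any point of \(Z^{\left[r\right]}\setminus Y^{\left[r\right]}\) misses \(Y^{\otimes r}\), and on \(Y^{\left[r\right]}\) the sum over \(Z^{\otimes r}_{\vec{x}}\) collapses to the sum over \(Y^{\otimes r}_{\vec{x}}\)), and then you conclude by the injectivity of the transverse correspondence (Proposition~\ref{prop:injcorres}). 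Likewise your first step upgrades the paper's inequality \(\mu_{Y}\left(f\right)\leq\mu_{Z}\left(f\right)\) to the equality \(\mu_{Y}=\mu_{Z}\mid_{Y}\). Your approach avoids Lemma~\ref{lem:borpartmon} and any choice of partition of unity, and yields more than the stated monotonicity (the intersection measures themselves are nested by restriction); the paper's approach is more computational but self-contained at the level of the explicit inverse formula. Both proofs are valid.
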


\begin{proof}
Let \(\mu\in\mathcal{M}^{G}\left(X\right)\). Since \(Y\subseteq Z\), we have \(Y_{x}\subseteq Z_{x}\) for every \(x\in X\), and from Definition~\ref{dfn:transdef} of transverse measure it follows that \(\mu_{Y}\left(f\right)\leq\mu_{Z}\left(f\right)\) for every Borel function \(f:Y\to\left[0,+\infty\right]\), and hence
\[\iota_{\mu}\left(Y\right)=\mu_{Y}\left(\mathbf{1}_{Y}\right)\leq\mu_{Z}\left(\mathbf{1}_{Y}\right)\leq\mu_{Z}\left(\mathbf{1}_{Z}\right)=\iota_{\mu}\left(Z\right).\]
For the intersection covolume, from \(Y\subseteq Z\) we have \(Y^{\otimes r}\subseteq Z^{\otimes r}\) and \(Y^{\left[r\right]}\subseteq Z^{\left[r\right]}\), so we can apply Lemma~\ref{lem:borpartmon}. Therefore, there are Borel partitions of unity \(\rho^{Y}\) for \(Y^{\left[r\right]}\) with respect to \(Y^{\otimes r}\) and \(\rho^{Z}\) for \(Z^{\left[r\right]}\) with respect to \(Z^{\otimes r}\), such that
\begin{equation}\label{eq:rhos}
\rho^{Y}\left(g,\vec{x}\right)=\rho^{Z}\left(g,\vec{x}\right)\text{ for every }\left(g,\vec{x}\right)\in G\times Y^{\left[r\right]}.
\end{equation}
Now with the notations of the inverse transverse correspondence in \eqref{eq:invcorr}, let
\[\mu^{\left[r,Y\right]}=\left(\mu_{Y}^{\otimes r}\right)^{Y^{\left[r\right]}}\text{ and }\mu^{\left[r,Z\right]}=\left(\mu_{Z}^{\otimes r}\right)^{Z^{\left[r\right]}}\]
be the intersection measures on \(Y^{\left[r\right]}\) and \(Z^{\left[r\right]}\). We then get the desired inequality by
\begin{align*}
I_{\mu}^{r}\left(Y\right)	=\mu^{\left[r,Y\right]}\left(\mathbf{1}_{Y^{\left[r\right]}}\right)
&=\iint\nolimits_{G\times Y^{\otimes r}}\rho^{Y}\left(g,g^{-1}.\vec{y}\right)\cdot\mathbf{1}_{Y^{\left[r\right]}}\left(g^{-1}.\vec{y}\right)dm_{G}\left(g\right)d\mu_{Y}^{\otimes r}\left(\vec{y}\right)\\
&=\iint\nolimits_{G\times Y^{\otimes r}}\rho^{Z}\left(g,g^{-1}.\vec{y}\right)\cdot\mathbf{1}_{Y^{\left[r\right]}}\left(g^{-1}.\vec{y}\right)dm_{G}\left(g\right)d\mu_{Y}^{\otimes r}\left(\vec{y}\right)\\
&\leq\iint\nolimits_{G\times Y^{\otimes r}}\rho^{Z}\left(g,g^{-1}.\vec{y}\right)\cdot\mathbf{1}_{Y^{\left[r\right]}}\left(g^{-1}.\vec{y}\right)dm_{G}\left(g\right)d\mu_{Z}^{\otimes r}\left(\vec{y}\right)\\
&\leq\iint\nolimits_{G\times Z^{\otimes r}}\rho^{Z}\left(g,g^{-1}.\vec{z}\right)\cdot\mathbf{1}_{Z^{\left[r\right]}}\left(g^{-1}.\vec{z}\right)dm_{G}\left(g\right)d\mu_{Z}^{\otimes r}\left(\vec{z}\right)\\
&=\mu^{\left[r,Y\right]}\left(\mathbf{1}_{Z^{\left[r\right]}}\right)=I_{\mu}^{r}\left(Z\right),
\end{align*}
where the second equality by definition of the inverse transverse correspondence \eqref{eq:invcorr}; the third equality is by \eqref{eq:rhos}; the first inequality is since \(\mu_{Y}\leq\mu_{Z}\) on \(Y\), so that \(\mu_{Y}^{\otimes r}\leq\mu_{Z}^{\otimes r}\) on \(Y^{\otimes r}\); and, the second inequality is since \(\rho^{Z}\) is nonnegative and \(\mathbf{1}_{Y^{\left[r\right]}}\leq\mathbf{1}_{Z^{\left[r\right]}}\) everywhere.
\end{proof}

In the following we establish a formula for the intersection covolume which extends the classical Kac's lemma in the case of \(G=\mathbb{R}\) and \(r=1\) (cf.~\cite{meyerovitch2024}). Note that Kac's lemma uses the natural Voronoi tesselation of \(\mathbb{R}\) induced by a locally finite set \(P\subset\mathbb{R}\), namely the partition of \(\mathbb{R}\) into left-closed right-open intervals whose middle points are the points of \(P\). This Voronoi tessellation scheme of \(\mathbb{R}\) is equivariant, in the sense that the tessellation induced by a \(t\)-translation of \(P\) is the \(t\)-translation of the tessellation induced by \(P\). Thus, we first need to find an equivariant Voronoi tesselation scheme for a general \(G\).

\subsection{Equivariant Voronoi tessellation scheme}\label{sct:voronoi}

Recall that a set \(P\subset G\) is {\bf locally finite} if its intersection with every compact set in \(G\) in finite. By Struble's theorem~\cite{Struble1974}, every lcsc group \(G\) admits a compatible metric, that is a right-invariant proper metric \(d\) which induces \(G\)'s topology. The following proposition was observed by Ab\'{e}rt and Mellick~\cite[\S3.3]{abert2022point}.

\begin{prop}\label{prop:voronoi}
Every lcsc group \(G\) with a compatible metric \(d\), admits a Voronoi tessellation scheme,
\[\mathscr{T}:P\mapsto\mathscr{T}\left(P\right)\coloneqq \left\{\Theta\left(P,p\right): p\in P\right\},\]
associating with every locally finite set \(P\subset G\) a Borel partition \(\mathscr{T}\left(P\right)\) of \(G\), such that:
\begin{enumerate}
    \item For every \(p\in P\), the cell \(\Theta\left(P,p\right)\subseteq G\) consists of points whose \(d\)-nearest point from \(P\) is \(p\).\footnote{Importantly, a point in \(\Theta\left(P,p\right)\) may have multiple nearest points from \(P\), and one of them is \(p\).} In particular, \(\Theta\left(P,p\right)\) has nonempty interior.
    \item If \(P\subseteq Q\) are locally finite sets, then \(\Theta\left(P,p\right)\supseteq\Theta\left(Q,p\right)\) for all \(p\in P\). If additionally \(\Theta\left(P,p\right)=\Theta\left(Q,p\right)\) modulo \(m_{G}\) for all \(p\in P\), then necessarily \(P=Q\).
    \item \(\mathscr{T}\) is equivariant: for every locally finite set \(P\) and \(g\in G\),
    \[\mathscr{T}\left(Pg\right)=\mathscr{T}\left(P\right)g.\]
    That is, \(\Theta\left(Pg,pg\right)=\Theta\left(P,p\right)g\) for every \(p\in P\).
\end{enumerate}
\end{prop}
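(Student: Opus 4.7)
The plan is to construct the Voronoi cells by intersecting the natural closed-Voronoi cells with a Borel tie-breaking rule, but one that is intrinsically right-invariant. A fixed Borel linear order $<$ on $G$ (which exists because $G$ is a standard Borel space) is not itself right-invariant, so applying it directly to the nearest points in $P$ would destroy property (3). The trick I would use is to \emph{shift the candidates to the identity before ordering them}: by right-invariance of $d$, the set of nearest points of $g$ in $P$, translated on the right by $g^{-1}$, lies on a sphere around $e_{G}$ whose radius $d\left(g, P\right)$ and whose membership transform equivariantly under right translation.

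Concretely, for a locally finite set $P \subset G$ and $g \in G$ define
\[N\left(P, g\right) \coloneqq \bigl\{qg^{-1} : q \in P,\ d\left(g, q\right) = d\left(g, P\right)\bigr\},\]
a finite nonempty set contained in $\bigl\{h \in G : d\left(e_{G}, h\right) = d\left(g, P\right)\bigr\}$. Then set
\[\Theta\left(P, p\right) \coloneqq \bigl\{g \in G : pg^{-1} = \min{}_{<}\!\left(N\left(P, g\right)\right)\bigr\}.\]
Borelness of each cell and the fact that $\left\{\Theta\left(P, p\right)\right\}_{p \in P}$ forms a Borel partition of $G$ follow routinely from Borelness of $<$ together with the existence and uniqueness of $\min_{<}$ on finite sets, and from local finiteness (which ensures $N\left(P, g\right)$ is always nonempty and finite since closed balls are compact by properness of $d$).

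For properties (1) and (2): nearest-point containment in $\Theta\left(P, p\right)$ is built into the definition, and nonempty interior holds because $p$ is isolated in $P$ (a consequence of local finiteness), so on a small enough ball around $p$ the candidate set $N\left(P, g\right)$ is the singleton $\left\{pg^{-1}\right\}$ and the pick is forced. Monotonicity under $P \subseteq Q$ relies on the hereditary property of $\min_{<}$: if $g \in \Theta\left(Q, p\right)$ with $p \in P$, then $pg^{-1}$ realizing $d\left(g, Q\right)$ forces $d\left(g, P\right) = d\left(g, Q\right)$ and $N\left(P, g\right) \subseteq N\left(Q, g\right)$ with $pg^{-1} \in N\left(P, g\right)$, so $pg^{-1}$ remains the $<$-min of the smaller set. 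The converse part of (2) is a mass argument: each cell has nonempty interior hence positive $m_{G}$-measure, so if $\Theta\left(Q, p\right) = \Theta\left(P, p\right)$ modulo $m_{G}$ for all $p \in P$, then the cells $\Theta\left(Q, q\right)$ for $q \in Q \setminus P$ must be $m_{G}$-null, which is impossible.

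The key step, and the whole motivation for the shifted tie-breaking, is equivariance (3). Using right-invariance of $d$,
\[N\left(Ph, gh\right) = \bigl\{\left(qh\right)\left(gh\right)^{-1} : q \in P,\ d\left(gh, qh\right) = d\left(gh, Ph\right)\bigr\} = N\left(P, g\right),\]
so the pick returns the same element $pg^{-1} \in G$ for both data $\left(P, g\right)$ and $\left(Ph, gh\right)$. This element corresponds to $ph$ in $Ph$ via $\left(pg^{-1}\right) \cdot \left(gh\right) = ph$, so $gh \in \Theta\left(Ph, ph\right)$ if and only if $g \in \Theta\left(P, p\right)$, yielding $\Theta\left(Ph, ph\right) = \Theta\left(P, p\right) h$. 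Given the right framing, the verification is mechanical; the only conceptual point, and the one I expect to be the main obstacle when first approaching the problem, is realizing that tie-breaking must be done on a set that is itself right-invariant (the shifted candidate set in $G$), and not on the nearest points in $P$ themselves.
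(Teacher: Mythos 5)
Your proposal is correct and follows essentially the same route as the paper: a fixed Borel linear order on \(G\) used to break ties among the nearest points of \(P\), after translating the candidate set into a right-translation-invariant form (the paper orders \(gD\left(g,P\right)^{-1}\) where you order \(D\left(g,P\right)g^{-1}\); both normalizations are invariant under \(\left(P,g\right)\mapsto\left(Ph,gh\right)\), so the difference is cosmetic). The verifications of the partition property, monotonicity, the mass argument via nonempty interiors, and equivariance all match the paper's argument.
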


\begin{rem}
The Voronoi tessellation scheme constructed in Proposition~\ref{prop:voronoi} is highly non-canonical. Note that when \(d\)-spheres in \(G\) have zero Haar measure, the Voronoi tessellation is canonical modulo Haar measure, as long as the metric \(d\) is understood. However, this is not always the case (for instance, in \(p\)-adic groups \(\mathbb{Q}_{p}\), which will be of interest to us).
\end{rem}

\begin{proof}
For a locally finite set \(P\subset G\) and \(x\in G\), let us denote
\[d\left(x,P\right)\coloneqq \inf\left\{d\left(x,p\right): p\in P\right\} \text{ and }D\left(x,P\right)\coloneqq \left\{p\in P: d\left(x,p\right)=d\left(x,P\right)\right\}.\]
Since \(P\) is locally finite, \(D\left(x,P\right)\) is nonempty and finite for all \(x\in G\). Since \(G\) is isomorphic as a standard Borel space to \(\mathbb{R}\), there is a Borel linear order \(\prec\) on \(G\). For a locally finite set \(P\subset G\) and \(p\in P\), define
\[\Theta\left(P,p\right)\coloneqq\big\{x\in G:p\in D\left(x,P\right)\text{ and }xp^{-1}=\min\nolimits_{\prec}xD\left(x,P\right)^{-1}\big\},\]
where \(\min_{\prec}xD\left(x,P\right)^{-1}\) denotes the \(\prec\)-minimal element of the finite set \(xD\left(x,P\right)^{-1}\). Since \(\prec\) is Borel, each \(\Theta\left(P,p\right)\) is Borel, and one can verify that \(\mathscr{T}\left(P\right)\coloneqq \{\Theta\left(P,p\right): p\in P\}\) forms a Borel partition of \(G\). Then Part (1) follows from the construction and, since \(P\) is locally finite, \(\Theta\left(P,p\right)\)'s interior is nonempty.

For Part (2), fix some locally finite sets \(P\subseteq Q\) and \(p\in P\). Let \(x\in\Theta\left(Q,p\right)\) be arbitrary, thus \(p\in D\left(x,Q\right)\) and \(xp^{-1}=\min_{\prec}xD\left(x,Q\right)^{-1}\), and we must show that \(x\in\Theta\left(P,p\right)\), namely: (i) \(p\in D\left(x,P\right)\) and (ii) \(xp^{-1}=\min_{\prec}xD\left(x,P\right)^{-1}\). For (i), since \(p\in D\left(x,Q\right)\) and \(P\subseteq Q\), we have
\[d\left(x,p\right)=d\left(x,Q\right)\leq d\left(x,P\right)\leq d\left(x,p\right),\]
so \(d\left(x,p\right)=d\left(x,P\right)\)
and \(p\in D\left(x,P\right)\). For (ii), note that \(D\left(x,P\right)\subseteq D\left(x,Q\right)\), since for every \(r\in D\left(x,P\right)\), using that \(p\in D\left(x,Q\right)\), it holds that
\[d\left(x,r\right)=d\left(x,P\right)\leq d\left(x,p\right)=d\left(x,Q\right)\leq d\left(x,P\right)=d\left(x,r\right),\]
hence \(d\left(x,r\right)=d\left(x,Q\right)\), so that \(r\in D\left(x,Q\right)\). Then recalling that \(p\in D\left(x,P\right)\) by (i), we deduce that
\[xp^{-1}=\min\nolimits_{\prec}xD\left(x,Q\right)^{-1}\preccurlyeq\min\nolimits_{\prec}xD\left(x,P\right)^{-1}\preccurlyeq xp^{-1},\]
hence \(xp^{-1}=\min_{\prec}xD\left(x,P\right)^{-1}\). This concludes the proof that \(\Theta\left(P,p\right)\supseteq\Theta\left(Q,p\right)\) for all \(p\in P\). In case that \(\Theta\left(P,p\right)=\Theta\left(Q,p\right)\) modulo \(m_{G}\) for all \(p\in P\), we obtain
\[G=\bigsqcup\nolimits_{p\in P}\Theta\left(P,p\right)=\bigsqcup\nolimits_{p\in P}\Theta\left(Q,p\right)\subseteq\bigsqcup\nolimits_{q\in Q}\Theta\left(Q,q\right)=G\text{ modulo }m_{G}.\]
Then for \(q\in Q\setminus P\) we have \(m_{G}\left(\Theta\left(Q,q\right)\right)=0\) while \(\Theta\left(Q,q\right)\)'s interior is nonempty, hence \(P=Q\).

For Part (3), let \(g\in G\) and a locally finite set \(P\subset G\). By the right-invariance of \(d\) we have
\[D\left(x,Pg\right)=D\big(xg^{-1},P\big)g,\quad x\in G.\]
It follows that for every \(p\in P\) and \(x\in G\),
\begin{align*}
x\in\Theta\left(Pg,pg\right)
&\iff pg\in D\left(x,Pg\right)\text{ and }xg^{-1}p^{-1}=\min\nolimits_{\prec}xD\left(x,Pg\right)^{-1}\\
&\iff pg\in D\left(xg^{-1},P\right)g\text{ and }xg^{-1}p^{-1}=\min\nolimits_{\prec}xg^{-1}D\left(xg^{-1},P\right)^{-1}\\
&\iff p\in D\left(xg^{-1},P\right)\text{ and }xg^{-1}p^{-1}=\min\nolimits_{\prec}xg^{-1}D\left(xg^{-1},P\right)^{-1}\\
&\iff xg^{-1}\in\Theta\left(P,p\right)\iff x\in\Theta\left(P,p\right)g.\qedhere
\end{align*}
\end{proof}

\subsection{A higher order Kac's lemma}

Given an lcsc group \(G\), pick some equivariant Voronoi tessellations scheme \(\mathscr{T}:P\mapsto\{\Theta\left(P,p\right): p\in P\}\) using Proposition~\ref{prop:voronoi}. Given a Borel \(G\)-space \(X\) with a cross section \(Y\), every \(x\in X\) is associated the locally finite set \(Y_{x}\subset G\), which in turn produces the Voronoi tesselation
\[\mathscr{T}\left(Y_{x}\right)=\left\{\Theta\left(Y_{x},g\right):g\in Y_{x}\right\}.\]
Note that for every \(y\in Y\) we have \(e_{G}\in Y_{y}\), hence \(\Theta\left(Y_{y},e_{G}\right)\in\mathscr{T}\left(Y_{y}\right)\). Note that the case \(r=1\) of the upcoming Proposition~\ref{prop:Kac} is the formula
\[\mu\left(X\right)=\int_{Y^{\otimes r}}m_{G}\left(\Theta\left(Y_{y},e_{G}\right)\right)d\mu_{Y}\left(y\right).\]

\begin{prop}[Kac's lemma]\label{prop:Kac}
For every transverse \(G\)-space \(\left(X,\mu,Y\right)\), the following formula holds:
\begin{equation}\label{eq:Kac}
I_{\mu}^{r}\left(Y\right)=\int_{Y^{\otimes r}}m_{G}\left(\Theta\left(Y_{y_{1}}\cap\dotsm\cap Y_{y_{r}},e_{G}\right)\right)d\mu_{Y}^{\otimes r}\left(y_{1},\dotsc,y_{r}\right),\quad r\in\mathbb{N},
\end{equation}
for any choice of an equivariant Voronoi tessellation scheme.
\end{prop}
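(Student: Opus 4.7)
The plan is to apply Campbell theorem~\ref{prop:campbell} to the transverse $G$-space $\bigl(Y^{\left[r\right]}, \mu^{\left[r\right]}, Y^{\otimes r}\bigr)$. By Definition~\ref{def:intspace1} its transverse measure is $\mu_{Y}^{\otimes r}$, so Campbell theorem reads
\[m_{G}\otimes\mu_{Y}^{\otimes r}\left(f\right)=\mu^{\left[r\right]}\bigl(f_{Y^{\left[r\right]}}\bigr)\]
for every Borel function $f:G\times Y^{\otimes r}\to\left[0,+\infty\right]$, where $f_{Y^{\left[r\right]}}$ denotes the $Y^{\left[r\right]}$-periodization of $f$ with respect to the cross section $Y^{\otimes r}$. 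I will select $f$ so that $f_{Y^{\left[r\right]}}\equiv\mathbf{1}_{Y^{\left[r\right]}}$; the right-hand side will then become $\mu^{\left[r\right]}\bigl(Y^{\left[r\right]}\bigr)=I_{\mu}^{r}\left(Y\right)$, while Fubini on the left-hand side will produce the Voronoi-cell integral appearing in~\eqref{eq:Kac}.

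The natural candidate is the test function
\[f\left(g,\vec{y}\right)\coloneqq\mathbf{1}_{\Theta\left(Y_{y_{1}}\cap\dotsm\cap Y_{y_{r}},e_{G}\right)}\left(g\right),\]
which is Borel by construction of the equivariant Voronoi tessellation scheme $\mathscr{T}$. With this choice, Fubini immediately gives
\[m_{G}\otimes\mu_{Y}^{\otimes r}\left(f\right)=\int_{Y^{\otimes r}}m_{G}\bigl(\Theta\left(Y_{y_{1}}\cap\dotsm\cap Y_{y_{r}},e_{G}\right)\bigr)d\mu_{Y}^{\otimes r}\left(\vec{y}\right),\]
so the entire proof reduces to verifying that $f_{Y^{\left[r\right]}}\left(\vec{z}\right)=1$ for every $\vec{z}\in Y^{\left[r\right]}$.

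Computing this periodization is where the equivariance of $\mathscr{T}$ enters, and it is the main step of the argument. I will fix $\vec{z}\in Y^{\left[r\right]}$ and set $Q\coloneqq Y_{z_{1}}\cap\dotsm\cap Y_{z_{r}}$, which coincides with $Y^{\otimes r}_{\vec{z}}$ and is locally finite and nonempty because $\vec{z}$ lies in the $G$-orbit of $Y^{\otimes r}$. For each $h\in Q$ I will use the identity $Y_{h.z_{i}}=Y_{z_{i}}h^{-1}$ to rewrite $Y_{h.z_{1}}\cap\dotsm\cap Y_{h.z_{r}}=Qh^{-1}$, and then apply Proposition~\ref{prop:voronoi}(3) with the base point $h\in Q$ to obtain
\[\Theta\bigl(Qh^{-1},e_{G}\bigr)=\Theta\bigl(Qh^{-1},h\cdot h^{-1}\bigr)=\Theta\left(Q,h\right)h^{-1}.\]
Consequently $f\left(h^{-1},h\diag\vec{z}\right)=\mathbf{1}_{\Theta\left(Q,h\right)h^{-1}}\left(h^{-1}\right)=\mathbf{1}_{\Theta\left(Q,h\right)}\left(e_{G}\right)$, and summing over $h\in Q$ produces exactly $1$, because $\left\{\Theta\left(Q,h\right):h\in Q\right\}$ is a Borel partition of $G$.

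The obstacle here is not conceptual but bookkeeping: one must juggle the right-translation of $Q$ by $h^{-1}$ with the equivariance of the Voronoi scheme and identify the single cell that contains $e_{G}$. Once this collapse of the $h$-sum to a partition is carried out, combining Campbell theorem with the Fubini computation immediately yields the formula~\eqref{eq:Kac}, and independence of the choice of scheme follows because the identity holds for \emph{any} scheme produced by Proposition~\ref{prop:voronoi}.
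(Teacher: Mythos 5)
Your proposal is correct and follows essentially the same route as the paper: the same test function \(f\left(g,\vec{y}\right)=\mathbf{1}_{\Theta\left(Y_{\vec{y}}^{\otimes r},e_{G}\right)}\left(g\right)\), the same application of Campbell theorem~\ref{prop:campbell} to \(\left(Y^{\left[r\right]},\mu^{\left[r\right]},Y^{\otimes r}\right)\), and the same equivariance computation collapsing \(f_{Y^{\left[r\right]}}\) to the constant \(1\) via the partition \(\left\{\Theta\left(Q,h\right):h\in Q\right\}\). The one point you pass over too quickly is the Borel measurability of \(f\) (equivalently of \(\vec{y}\mapsto m_{G}\left(\Theta\left(Y_{\vec{y}}^{\otimes r},e_{G}\right)\right)\)), which is not automatic ``by construction'' and which the paper establishes by a separate Lusin--Novikov argument producing Borel enumerations of \(Y_{y}\) and of the nearest-point sets \(D\left(g,Y_{y}\right)\).
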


\begin{proof}[Proof of Proposition~\ref{prop:Kac}]
Let us first verify that the integrand function in \eqref{eq:Kac} is measurable. It suffices to show this for \(r=1\) (the case of general \(r\in\mathbb{N}\) follows from the case \(r=1\) by looking at the cross section \(Y^{\otimes r}\)). Using the Lusin-Novikov theorem, pick Borel maps \(\lambda_{n}:Y\to G\), \(n\in\mathbb{N}\), such that \(Y_{y}=\left\{ \lambda_{n}\left(y\right):n\in\mathbb{N}\right\}\) for every \(y\in Y\). Then
\[\theta:G\times Y\longrightarrow\mathbb{R}_{\geq 0},\quad\theta\left(g,y\right)\coloneqq d\left(g,Y_{y}\right)=\inf\left\{ d\left(g,\lambda_{n}\left(y\right)\right):n\in\mathbb{N}\right\},\]
is a Borel function, and hence
\[E\coloneqq\left\{ \left(h,g,y\right):h\in D\left(g,Y_{y}\right)\right\} =\left\{ \left(h,g,y\right):d\left(g,h\right)=\theta\left(g,y\right)\right\} \subset G\times G\times Y\]
is a Borel set. Consequently, since \(D\left(g,Y_{y}\right)=\left\{h:\left(h,g,y\right)\in E\right\} \) is finite for every \(\left(g,y\right)\in G\times Y\), again by the Lusin--Novikov theorem there are Borel maps \(\lambda_{n}^{\prime}:G\times Y\to G\), \(n\in\mathbb{N}\), such that \(\left\{ h:\left(h,g,y\right)\in E\right\} =\left\{ \lambda_{n}^{\prime}\left(g,y\right):n\in\mathbb{N}\right\} \) for every \(\left(g,y\right)\in G\times Y\). All together, we can write
\begin{align*}
S
&\coloneqq\left\{ \left(g,y\right):g\in\Theta\left(Y_{y},e_{G}\right)\right\}\\
&=\left\{ \left(g,y\right):e_{G}\in D\left(g,Y_{y}\right)\right\}\cap\big\{ \left(g,y\right):g=\min\nolimits_{\prec}gD\left(g,Y_{y}\right)^{-1}\big\}\\
&=\left\{ \left(g,y\right):d\left(g,e_{G}\right)=\theta\left(g,y\right)\right\}\cap\bigcap\nolimits_{n\in\mathbb{N}}\big\{ \left(g,y\right):g\preccurlyeq g\lambda_{n}^{\prime}\left(g,y\right)^{-1}\big\},
\end{align*}
and since \(\prec\) is a Borel order, this \(S\) is a Borel set. Then by~\cite[Theorem (17.25)]{kechris2012classical}, the function \(Y\to\mathbb{R}_{\geq 0}\), \(y\mapsto m_{G}\left(S_{y}\right)=m_{G}\left(\Theta\left(Y_{y},e_{G}\right)\right)\), is measurable, as required.

Let us now prove the identity \eqref{eq:Kac}. For a transverse \(G\)-space \(\left(X,\mu,Y\right)\) and an equivariant Voronoi tessellation scheme \(\mathscr{T}\), define the Borel function
\[f:G\times Y^{\otimes r}\longrightarrow\left[0,+\infty\right],\quad f\left(g,\vec{y}\right)\coloneqq\mathbf{1}_{\Theta\small(Y_{\vec{y}}^{\otimes r},e_{G}\small)}\left(g\right).\]
By Campbell theorem~\ref{prop:campbell}, as the transverse \(G\)-space \(\left(Y^{\left[r\right]},\mu^{\left[r\right]},Y^{\otimes r}\right)\) has transverse measure \(\mu_{Y}^{\otimes r}\),
\begin{equation}\label{eq:Kac1}
\int_{Y^{\otimes r}}m_{G}\big(\Theta\big(Y_{\vec{y}}^{\otimes r},e_{G}\big)\big)d\mu_{Y}^{\otimes r}\left(\vec{y}\right)=m_{G}\otimes\mu_{Y}^{\otimes r}\left(f\right)=\mu^{\left[r\right]}\left(f_{Y^{\left[r\right]}}\right).
\end{equation}
For every \(\vec{x}\in Y^{\left[r\right]}\) and \(g\in G\), using that \(Y_{g\diag\vec{x}}^{\otimes r}=Y_{\vec{x}}^{\otimes r}g^{-1}\) and the equivariance in Proposition~\ref{prop:voronoi}(3),
\[\Theta\big(Y_{g\diag\vec{x}}^{\otimes r}\,\,,e_{G}\big)=\Theta\left(Y_{\vec{x}}^{\otimes r}g^{-1},e_{G}\right)=\Theta\left(Y_{x}^{\otimes r},g\right)g^{-1},\]
and therefore the \(Y^{\left[r\right]}\)-periodization of \(f\) is
\begin{align*}
f_{Y^{\left[r\right]}}\left(\vec{x}\right)
&=\sum\nolimits_{g\in Y_{\vec{x}}^{\otimes r}}\mathbf{1}_{\Theta\big(Y_{g\diag\vec{x}}^{\otimes r}\,,e_{G}\big)}\left(g^{-1}\right)\\
&=\sum\nolimits_{g\in Y_{\vec{x}}^{\otimes r}}\mathbf{1}_{\Theta\big(Y_{\vec{x}}^{\otimes r}\,,e_{G}\big)g^{-1}}\left(g^{-1}\right)=\sum\nolimits_{g\in Y_{\vec{x}}^{\otimes r}}\mathbf{1}_{\Theta\left(Y_{\vec{x}}^{\otimes r}\,,g\right)}\left(e_{G}\right)=1,\quad\vec{x}\in Y^{\left[r\right]},
\end{align*}
where the last equality follows since \(\mathscr{T}\left(Y_{\vec{x}}^{\otimes r}\right)=\{\Theta\big(Y_{\vec{x}}^{\otimes r},g\big):g\in Y_{\vec{x}}\}\) is a partition of \(G\). It follows that
\[\mu^{\left[r\right]}\left(f_{Y^{\left[r\right]}}\right)=\mu^{\left[r\right]}\big(Y^{\left[r\right]}\big)=I_{\mu}^{r}\left(Y\right),\]
which together with \eqref{eq:Kac1} completes the proof.
\end{proof}

We can now deduce that the intersection covolume is unchanged under transverse \(G\)-factors.

\begin{prop}\label{prop:interfactor}
Let \(\left(X,\mu,Y\right)\) and \(\left(W,\nu,Z\right)\) be transverse \(G\)-spaces. If there exists a transverse \(G\)-factor \(\left(X,\mu,Y\right)\to\left(W,\nu,Z\right)\) then
\[I_{\mu}^{r}\left(Y\right)=I_{\nu}^{r}\left(Z\right),\quad r\in\mathbb{N}.\]
\end{prop}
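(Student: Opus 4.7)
The plan is to combine the functoriality of the transverse measure (Proposition~\ref{prop:factor}) with the Kac-type formula (Proposition~\ref{prop:Kac}), after passing to a concrete version of the transverse factor.

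First, I would invoke Proposition~\ref{prop:conctrans} to replace the given transverse \(G\)-factor by a concrete version \(\phi_{o}:X_{o}\to W\) with associated cross section \(Y_{o}\subseteq X_{o}\) measure equivalent to \(Y\). The defining property of a concrete transverse \(G\)-factor then gives the \emph{pointwise} identity \((Y_{o})_{x}=Z_{\phi_{o}(x)}\) for every \(x\in X_{o}\); in particular, for any \(\vec{y}=(y_{1},\dotsc,y_{r})\in Y_{o}^{\otimes r}\),
\[(Y_{o})_{y_{1}}\cap\dotsm\cap(Y_{o})_{y_{r}}=Z_{\phi_{o}(y_{1})}\cap\dotsm\cap Z_{\phi_{o}(y_{r})}.\]
Since \(Y_{o}\) is measure equivalent to \(Y\), Proposition~\ref{prop:crossequiv} yields \(\mu_{Y}=\mu_{Y_{o}}\), and Proposition~\ref{prop:null}(2) yields \(Y_{y}=(Y_{o})_{y}\) for \(\mu_{Y}\)-a.e.\ \(y\). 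Applying Kac's formula (Proposition~\ref{prop:Kac}) shows that the intersection covolume depends only on \(\mu_{Y}\) and on the sets \(Y_{y}\) up to \(\mu_{Y}\)-null modifications, so \(I_{\mu}^{r}(Y)=I_{\mu}^{r}(Y_{o})\).

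Next, I would fix a single equivariant Voronoi tessellation scheme \(\mathscr{T}\) via Proposition~\ref{prop:voronoi} and apply Kac's formula to both \((X,\mu,Y_{o})\) and \((W,\nu,Z)\):
\begin{align*}
I_{\mu}^{r}(Y_{o})&=\int_{Y_{o}^{\otimes r}}m_{G}\bigl(\Theta\bigl((Y_{o})_{y_{1}}\cap\dotsm\cap(Y_{o})_{y_{r}},\,e_{G}\bigr)\bigr)\,d\mu_{Y_{o}}^{\otimes r}(\vec{y}),\\
I_{\nu}^{r}(Z)&=\int_{Z^{\otimes r}}m_{G}\bigl(\Theta(Z_{z_{1}}\cap\dotsm\cap Z_{z_{r}},\,e_{G})\bigr)\,d\nu_{Z}^{\otimes r}(\vec{z}).
\end{align*}
Substituting the pointwise identity into the first integrand rewrites it as the integrand of the second integral, precomposed with \(\phi_{o}^{\otimes r}:Y_{o}^{\otimes r}\to Z^{\otimes r}\). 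Functoriality (Proposition~\ref{prop:factor}) gives \(\phi_{o\ast}\mu_{Y_{o}}=\nu_{Z}\), hence \((\phi_{o}^{\otimes r})_{\ast}\mu_{Y_{o}}^{\otimes r}=\nu_{Z}^{\otimes r}\), and a change of variables transforms the first integral into the second, yielding \(I_{\mu}^{r}(Y_{o})=I_{\nu}^{r}(Z)\) and completing the proof.

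The only real bookkeeping to watch is the passage from the abstract transverse \(G\)-factor to its concrete version (so that the crucial pointwise identity \((Y_{o})_{x}=Z_{\phi_{o}(x)}\) holds on the nose rather than only a.e.) and the verification that \(I_{\mu}^{r}(Y)\) is unaffected by replacing \(Y\) by the measure-equivalent \(Y_{o}\); once these are in place, the equality of intersection covolumes reduces transparently to Kac's formula plus a change of variables under \(\phi_{o}^{\otimes r}\).
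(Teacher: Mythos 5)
Your proposal is correct and follows essentially the same route as the paper: pass to a concrete version via Proposition~\ref{prop:conctrans}, use Proposition~\ref{prop:factor} to get \(\phi_{o\ast}\mu_{Y_{o}}^{\otimes r}=\nu_{Z}^{\otimes r}\), and then compare the two Kac integrals (Proposition~\ref{prop:Kac}) through the pointwise identity \(\left(Y_{o}\right)_{x}=Z_{\phi_{o}\left(x\right)}\) and the measure equivalence of \(Y\) and \(Y_{o}\). The paper's proof is exactly this change-of-variables computation, so no further comment is needed.
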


\begin{proof}
By Proposition~\ref{prop:conctrans}, if there exists a transverse \(G\)-factor \(\left(X,\mu,Y\right)\to\left(W,\nu,Z\right)\) then there exists such a concrete transverse \(G\)-factor, say \(\phi_{o}\) which is defined on a \(\mu\)-conull \(G\)-invariant Borel set \(X_{o}\subseteq X\) and with a cross section \(Y_{o}\) measure equivalent to \(Y\). By Proposition~\ref{prop:factor} we have \(\nu_{Z}=\phi_{o\ast}\mu_{Y}=\phi_{o\ast}\mu_{Y_{o}}\). Then for every \(r\in\mathbb{N}\) also \(\nu_{Z}^{\otimes r}=\phi_{o\ast}\mu_{Y_{o}}^{\otimes r}\) when applying \(\phi_{o}\) diagonally, and by Kac's lemma~\ref{prop:Kac} we get
\begin{align*}
I_{\nu}^{r}\left(Z\right)	
&=\int_{Z^{\otimes r}}m_{G}\left(\Theta\left(Z_{\vec{z}}^{\otimes r},e_{G}\right)\right)d\phi_{o\ast}\mu_{Y_{o}}^{\otimes r}\left(\vec{z}\right)\\
&=\int_{Y_{o}^{\otimes r}}m_{G}\big(\Theta\big(Z_{\phi_{o}\left(\vec{y}\right)}^{\otimes r},e_{G}\big)\big)d\mu_{Y_{o}}^{\otimes r}\left(\vec{y}\right)\\
&=\int_{Y_{o}^{\otimes r}}m_{G}\big(\Theta\big(Y_{\vec{y}}^{\otimes r},e_{G}\big)\big)d\mu_{Y_{o}}^{\otimes r}\left(\vec{y}\right)\\
&=\int_{Y^{\otimes r}}m_{G}\big(\Theta\big(Y_{\vec{y}}^{\otimes r},e_{G}\big)\big)d\mu_{Y}^{\otimes r}\left(\vec{y}\right)=I_{\mu}^{r}\left(Y\right),
\end{align*}
where in the fourth equality we used that \(Y\) and \(Y_{o}\) are measure equivalent.
\end{proof}

\subsection{Finiteness of the intersection covolume}

The following theorem generalizes Theorem~\ref{mthm:finintcov}.

\begin{thm}\label{thm:finiteiv}
Let \(\left(X,\mu,Y\right)\) be a transverse \(G\)-space and let \(r\in\mathbb{N}\). If \(e_{G}\in\Lambda_{Y}^{2r-1}\) is an isolated point, then \(I_{\mu}^{r}\left(Y\right)<+\infty\).
\end{thm}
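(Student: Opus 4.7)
The plan is induction on $r$. The base case $r=1$ is immediate since $I_\mu^1(Y)=\mu(X)=1$, and for $r\geq 2$ the inductive hypothesis applies because $\Lambda_Y^{2r-3}\subseteq\Lambda_Y^{2r-1}$ also has $e_G$ as an isolated point.

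For the inductive step, note that $\Lambda_Y\subseteq\Lambda_Y^{2r-1}$ is uniformly discrete, so the sufficient condition \eqref{eq:doubs} for Proposition~\ref{prop:intmeasiden} holds, and the proposition gives $\mu^{[r]}=\overline{\mu}^{[r]}_{Y^{[r]}}$, where $\overline{\mu}^{[r]}=\mu\otimes\mu^{[r-1]}$ on $\overline{Y}^{[r]}=X\times Y^{[r-1]}$. Hence $I_\mu^r(Y)=\overline{\mu}^{[r]}_{Y^{[r]}}(Y^{[r]})$ is the intensity of the cross section $Y^{[r]}$ in this ambient space. Campbell's theorem~\ref{prop:campbell} then gives, for every compact $K\subseteq G$,
\[
m_G(K)\cdot I_\mu^r(Y)=\int_{\overline{Y}^{[r]}}\bigl\lvert Y^{[r]}_{(x,\vec{z})}\cap K^{-1}\bigr\rvert\,d\overline{\mu}^{[r]}(x,\vec{z}),
\]
with $\overline{\mu}^{[r]}(\overline{Y}^{[r]})=I_\mu^{r-1}(Y)<+\infty$ by induction. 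It therefore suffices to pick $K$ that makes the integrand uniformly bounded.

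I would choose $K$ a compact neighborhood of $e_G$ with $KK^{-1}\cap\Lambda_Y^{2r-1}=\{e_G\}$ (possible by hypothesis) and claim $\lvert Y^{[r]}_{(x,\vec{z})}\cap K^{-1}\rvert\leq 1$ for all $(x,\vec{z})\in\overline{Y}^{[r]}$. Given $g_1,g_2$ in this intersection, the relation $(g_j.x,\vec{z})\in Y^{[r]}=G\diag Y^{\otimes r}$ produces representations $(g_j.x,\vec{z})=t_j\diag\vec{y}^{(j)}$ with $\vec{y}^{(j)}\in Y^{\otimes r}$, and therefore decompositions $g_j=t_jk_j$ with $t_j\in\bigcap_{i\geq 2}Y_{z_i}^{-1}$ and $k_j=t_j^{-1}g_j\in Y_x$. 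Matching the coordinates of $\vec{z}$ gives $t_2^{-1}t_1\in\bigcap_{i\geq 2}Y_{y_i^{(1)}}\subseteq\Lambda_Y$, and by choosing the $\vec{y}^{(j)}$'s compatibly with a single representative $\vec{z}=s\diag(y_2^*,\ldots,y_r^*)$ and a base point of $x$ in $Y$ (via Lemma~\ref{lem:injcov}), one shows $g_1^{-1}g_2$ is a product of $2r-1$ elements of $\Lambda_Y=\Lambda_Y^{-1}$. Thus $g_1^{-1}g_2\in\Lambda_Y^{2r-1}\cap KK^{-1}=\{e_G\}$, forcing $g_1=g_2$. The uniform bound combined with the Campbell identity then yields $I_\mu^r(Y)\leq I_\mu^{r-1}(Y)/m_G(K)<+\infty$, closing the induction.

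The main obstacle is the structural factorization placing $g_1^{-1}g_2$ in $\Lambda_Y^{2r-1}$ rather than merely in a conjugate of a smaller power. A direct expansion for non-abelian $G$ only produces $g_1^{-1}g_2\in g_0\Lambda_Y^3 g_0^{-1}$, where $g_0$ comes from expressing $x\in X$ as $g_0.y_0$ with $y_0\in Y$; this conjugate may fall outside $\Lambda_Y^{2r-1}$. Extracting the honest inclusion requires carefully aligning the base-point data on $\vec{z}$ with the base point for $x$, absorbing the element $s$ and $g_0$ into additional $\Lambda_Y$-factors via the transverse correspondence (Theorem~\ref{thm:corresp}) and the injective cover of Lemma~\ref{lem:injcov}. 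The resulting word contains $r-1$ matching factors from the coordinates of $\vec{z}$ and $r$ further factors from the first-coordinate relations, producing precisely $2r-1$ elements of $\Lambda_Y$ and thereby explaining the exponent in the hypothesis.
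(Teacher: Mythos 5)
Your setup coincides with the paper's: both reduce, via Proposition~\ref{prop:intmeasiden} and Proposition~\ref{prop:intensityli}, to showing that $\int_{\overline{Y}^{[r]}}\big|Y^{[r]}_{(x,\vec z)}\cap K\big|\,d\overline{\mu}^{[r]}(x,\vec z)<+\infty$. The gap is your central claim that $K$ can be chosen so that $\big|Y^{[r]}_{(x,\vec z)}\cap K^{-1}\big|\le 1$ pointwise. This is false, and the obstacle you flag at the end (the conjugation producing $g_0\Lambda_Y^3 g_0^{-1}$) is fatal rather than something that can be removed by ``carefully aligning the base-point data.'' Writing $g_j=t_j^{-1}k_j$ with $t_j\in Y_{\vec z}^{\otimes(r-1)}$ and $k_j\in Y_x$, one gets $g_1^{-1}g_2=k_1^{-1}(t_1t_2^{-1})k_2$ with $t_1t_2^{-1}\in\Lambda_Y$ and $k_2k_1^{-1}\in\Lambda_Y$, but the product lies only in a conjugate of $\Lambda_Y^{2}$ by $k_1$, and $k_1\in Y_x$ is genuinely unbounded as $x$ ranges over $X$; no choice of representatives removes this conjugation, because it is forced by the group law. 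The paper's own example in Section~\ref{sct:nonunif} is a concrete counterexample to your claim: for $\Gamma=\mathrm{SL}_d(\mathbb{Z})<G=\mathrm{SL}_d(\mathbb{R})$ with $Y=\{\Gamma\}$ (so $\Lambda_Y^{2r-1}=\Gamma$ and $e_G$ is isolated), one has $Y^{[2]}_{(\Gamma g_1,\Gamma g_2)}=g_2^{-1}\Gamma g_1$, and $\big|g^{-1}\Gamma g\cap K\big|$ is unbounded over $g\in G$ for every identity neighborhood $K$, since conjugates of unipotent elements accumulate at the identity. This nonuniform-lattice phenomenon is exactly what the theorem is meant to cover, so an argument resting on a uniform fiber bound can at best recover the uniform case already treated in~\cite{bjorklund2025int}.

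The paper avoids any pointwise bound. It introduces the auxiliary cross section $\widetilde Y^{(r)}=\Lambda_Y^{r-1}\diag Y^{\otimes(r-1)}$ of $Y^{[r-1]}$, shows $\Lambda_{\widetilde Y^{(r)}}\subseteq\Lambda_Y^{2r-1}$ (the count is $(r-1)+1+(r-1)=2r-1$, arising entirely on the $\vec z$-side --- not your split of ``$r-1$ factors from $\vec z$ plus $r$ from the first coordinate''), so that $\widetilde Y^{(r)}$ is separated and hence has finite intensity, and then bounds $Y^{[r]}_{(x,\vec z)}\subseteq\big(\widetilde Y^{(r)}_{\vec z}\big)^{-1}\lambda(x)$, keeping the unbounded element $\lambda(x)\in Y_x$ as an \emph{outer translate} rather than a conjugator. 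Integrating in $\vec z$ first, Campbell's theorem~\ref{prop:campbell} for $\widetilde Y^{(r)}$ gives $\int\big|\widetilde Y^{(r)}_{\vec z}\cap\lambda(x)K^{-1}\big|\,d\mu^{[r-1]}(\vec z)=m_G(K)\cdot\iota_{\mu^{[r-1]}}\big(\widetilde Y^{(r)}\big)$ by unimodularity, independently of $x$: the translation invariance of Haar measure absorbs precisely the unboundedness that kills the pointwise estimate. The remainder of your structure --- the explicit induction giving $\overline{\mu}^{[r]}$ finite total mass (which the paper leaves implicit but does need in order to apply Lemma~\ref{lem:separatedli} to $\mu^{[r-1]}$), the verification of \eqref{eq:doubs}, and the appeal to Proposition~\ref{prop:intmeasiden} --- is sound and matches the paper; only the final estimate must be replaced by this integrated bound.
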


Recall that for every cut--and--project scheme \(\left(G,H;\Gamma\right)\) and a window \(W\subset H\), the return times set \(\Lambda_{Y_{W}}\) of \(Y_{W}\) is discrete in \(G\) of any order, that is, \(\Lambda_{Y_{W}}^{r}\) is locally finite (and in particular \(e_{G}\in\Lambda_{Y_{W}}^{r}\) is an isolated point) for all \(r\in\mathbb{N}\) (see~\cite[Proposition 2.13]{BjHa2018}, whose proofs holds for all \(r\in\mathbb{N}\)). Then from Theorem~\ref{thm:finiteiv} it immediately follows that:

\begin{cor}
Let \(\left(G,H;\Gamma\right)\) be a cut--and--project scheme, \(\left(\Xi,\xi\right)\) the associated probability preserving \(G\)-space, and \(Y_{W}\) the cross section for some window \(W\subset H\). Then
\[I_{\xi}^{r}\left(Y_{W}\right)<+\infty\text{ for all }r\in\mathbb{N}.\]
\end{cor}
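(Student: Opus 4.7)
The plan is to derive the corollary as an immediate instance of Theorem~\ref{thm:finiteiv}. Fix an arbitrary \(r\in\mathbb{N}\). The hypothesis of Theorem~\ref{thm:finiteiv} requires that \(e_{G}\) be an isolated point of \(\Lambda_{Y_{W}}^{2r-1}\), and it clearly suffices to check the stronger statement that \(\Lambda_{Y_{W}}^{2r-1}\) is locally finite in \(G\).

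This local finiteness is precisely the content of the fact recalled in the paragraph immediately preceding the corollary: for a cut--and--project scheme \((G,H;\Gamma)\) and any window \(W\subset H\), the set \(\Lambda_{Y_{W}}^{s}\) is locally finite for every \(s\in\mathbb{N}\), by the cited extension of~\cite[Proposition~2.13]{BjHa2018}. The underlying mechanism is that, starting from the explicit identification
\[
\Lambda_{Y_{W}}=\operatorname{proj}_{G}\bigl(\Gamma\cap(G\times WW^{-1})\bigr)
\]
recorded in the introduction, iterating the product inside the projection yields the inclusion
\[
\Lambda_{Y_{W}}^{s}\subseteq\operatorname{proj}_{G}\bigl(\Gamma\cap(G\times(WW^{-1})^{s})\bigr),
\]
and the right-hand side is itself a cut--and--project set in \(G\) with the enlarged window \((WW^{-1})^{s}\); such a set is uniformly discrete (hence locally finite) by the standard lattice argument, relying on the injectivity of \(\operatorname{proj}_{G}\!\mid_{\Gamma}\) together with the lattice property of \(\Gamma\) in \(G\times H\).

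Specializing to \(s=2r-1\) gives the required isolation of \(e_{G}\) in \(\Lambda_{Y_{W}}^{2r-1}\), and invoking Theorem~\ref{thm:finiteiv} concludes the proof. Since both ingredients---the finiteness criterion of Theorem~\ref{thm:finiteiv} on the abstract side, and the quantitative discreteness of cut--and--project return times sets of~\cite{BjHa2018} on the concrete side---are cited as available, there is no genuine obstacle to overcome: the corollary is a purely formal combination. The only point one might want to double-check is that the quoted discreteness holds with no a priori compactness assumption on \(W\); this is indeed absorbed in the statement of \cite[Proposition~2.13]{BjHa2018} as invoked in the excerpt, so no further verification is needed.
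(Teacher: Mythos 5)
Your proposal is correct and follows exactly the paper's route: the paper likewise deduces the corollary by noting that \(\Lambda_{Y_{W}}^{s}\) is locally finite for every \(s\in\mathbb{N}\) (citing the extension of \cite[Proposition~2.13]{BjHa2018}), so that \(e_{G}\) is isolated in \(\Lambda_{Y_{W}}^{2r-1}\), and then applying Theorem~\ref{thm:finiteiv}. The additional sketch you give of the inclusion \(\Lambda_{Y_{W}}^{s}\subseteq\operatorname{proj}_{G}\bigl(\Gamma\cap(G\times(WW^{-1})^{s})\bigr)\) is a correct account of the mechanism behind the cited fact, so nothing is missing.
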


\begin{proof}[Proof of Theorem~\ref{thm:finiteiv}]
Let \(\Lambda_{Y}\) be the return times set of \(Y\), and note that by the assumption \(\Lambda_{Y}^{2r-1}\), and a fortiori \(\Lambda_{Y}\), is locally finite. For the Borel \(G\)-space \(Y^{\left[r-1\right]}\) (in the diagonal action), define a new cross section \(\widetilde{Y}^{\left(r\right)}\) by
\[\widetilde{Y}^{\left(r\right)}\coloneqq\Lambda_{Y}^{r-1}\diag Y^{\otimes\left(r-1\right)}\subseteq Y^{\left[r-1\right]},\]
and note that its return times sets are
\[\widetilde{Y}_{\vec{z}}^{\left(r\right)}=\Lambda_{Y}^{r-1}Y_{\vec{z}}^{\otimes\left(r-1\right)}=\Lambda_{Y}^{r-1}\left(Y_{z_{2}}\cap\dotsm\cap Y_{z_{r}}\right)\text{ for }\vec{z}=\left(z_{2},\dotsc,z_{r}\right)\in Y^{\left[r-1\right]}.\]
Note also that for every \(\widetilde{y}\in\widetilde{Y}^{\left(r\right)}\), writing \(\widetilde{y}=h\diag\vec{y}\) for some \(h\in\Lambda_{Y}^{r-1}\) and \(\vec{y}\in Y^{\otimes\left(r-1\right)}\), we have
\[\widetilde{Y}_{\widetilde{y}}^{\left(r\right)}=\Lambda_{Y}^{r-1}Y_{\vec{y}}^{\otimes\left(r-1\right)}h^{-1}\subseteq\Lambda_{Y}^{2r-1},\]
since \(Y_{\vec{y}}^{\otimes\left(r-1\right)}\subseteq Y_{y_{1}}\subseteq\Lambda_{Y}\), and therefore
\[\Lambda_{\widetilde{Y}^{\left(r\right)}}\subseteq\Lambda_{Y}^{2r-1}.\]
From the assumption that \(e_{G}\in\Lambda_{Y}^{2r-1}\) is an isolated point it follows that \(\widetilde{Y}^{\left(r\right)}\) is a separated cross section, and therefore, using Lemma~\ref{lem:separatedli}, it is locally integrable with respect to \(\mu^{\left[r-1\right]}\), thus
\begin{equation}\label{eq:Ytilde}
\iota_{\mu^{\left[r-1\right]}}\big(\widetilde{Y}^{\left(r\right)}\big)=\mu_{\widetilde{Y}}^{\left[r-1\right]}\big(\widetilde{Y}^{\left(r\right)}\big)<+\infty.
\end{equation}

Recall the Borel \(G\)-space \(\overline{Y}^{\left[r\right]}\) with its cross section \(Y^{\left[r\right]}\). In order to relate its return times sets to those of \(\widetilde{Y}^{\left[r\right]}\), fix a Borel section \(\lambda:X\to G\) such that \(\lambda\left(x\right)\in Y_{x}\) for every \(x\in X\), and for every \(\left(x,\vec{z}\right)=\left(x,z_{2},\dotsc,z_{r}\right)\in\overline{Y}^{\left[r\right]}\) we find that
\begin{equation}\label{eq:Ytildereturn}
\begin{aligned}
Y_{\left(x,\vec{z}\right)}^{\left[r\right]}	
&=\left(Y_{z_{r}}^{-1}\cap\dotsm\cap Y_{z_{2}}^{-1}\right)Y_{x}=\big(Y_{\vec{z}}^{\otimes\left(r-1\right)}\big)^{-1}Y_{x}=\big(Y_{\vec{z}}^{\otimes\left(r-1\right)}\big)^{-1}Y_{\lambda\left(x\right).x}\lambda\left(x\right)\\
&\subseteq\big(Y_{\vec{z}}^{\otimes\left(r-1\right)}\big)^{-1}\Lambda_{Y}\lambda\left(x\right)=\big(\Lambda_{Y}Y_{\vec{z}}^{\otimes\left(r-1\right)}\big)^{-1}\lambda\left(x\right)=\big(\widetilde{Y}_{\vec{z}}^{\left(r\right)}\big)^{-1}\lambda\left(x\right).
\end{aligned}
\end{equation}
It follows that for every compact set \(K\subset G\),
\begin{align*}
&\int_{\overline{Y}^{\left[r\right]}}\big|Y_{\left(x,\vec{z}\right)}^{\left[r\right]}\cap K\big|d\overline{\mu}^{\left[r\right]}\left(x,\vec{z}\right)\\
\eqref{eq:Ytildereturn}&\qquad \leq\int_{\overline{Y}^{\left[r\right]}}\big|\big(\widetilde{Y}_{\vec{z}}^{\left(r\right)}\big)^{-1}\lambda\left(x\right)\cap K\big|d\overline{\mu}^{\left[r\right]}\left(x,\vec{z}\right)\\
&\qquad=\int_{\overline{Y}^{\left[r\right]}}\big|\widetilde{Y}_{\vec{z}}^{\left(r\right)}\cap\lambda\left(x\right)K^{-1}\big|d\overline{\mu}^{\left[r\right]}\left(x,\vec{z}\right)\\
&\qquad=\int_{X}\Big(\int_{Y^{\left[r-1\right]}}\big|\widetilde{Y}_{\vec{z}}^{\left(r\right)}\cap\lambda\left(x\right)K^{-1}\big|d\mu^{\left[r-1\right]}\left(\vec{z}\right)\Big)d\mu\left(x\right)\\
\left(\ast\right)&\qquad=\int_{X}\Big(m_{G}\big(K\lambda\left(x\right)^{-1}\big)\cdot\mu_{\widetilde{Y}}^{\left[r-1\right]}\big(\widetilde{Y}^{\left(r\right)}\big)\Big)d\mu\left(x\right)\\
\text{(unimodularity and }\eqref{eq:Ytilde})&\qquad=m_{G}\left(K\right)\cdot\mu_{\widetilde{Y}^{\left(r\right)}}^{\left[r-1\right]}\big(\widetilde{Y}^{\left(r\right)}\big)<+\infty,
\end{align*}
where \(\left(\ast\right)\) is by Campbell theorem~\ref{prop:campbell}, applied for each \(x\in X\) to the Borel function
\[f^{x}:G\times\widetilde{Y}^{\left(r\right)}\longrightarrow\left[0,+\infty\right],\quad f^{x}\left(g,\widetilde{y}\right)=\mathbf{1}_{K\lambda\left(x\right)^{-1}}\left(g\right),\]
whose \(Y^{\left[r-1\right]}\)-periodization (as in the proof of Proposition~\ref{prop:intensityli}) is
\[f_{Y^{\left[r-1\right]}}^{x}\left(\vec{z}\right)=\big|\widetilde{Y}_{\vec{z}}^{\left(r\right)}\cap\lambda\left(x\right)K^{-1}\big|.\]
This computation shows that \(Y^{\left[r\right]}\) is a locally integrable cross section for \(\big(\overline{Y}^{\left[r\right]},\overline{\mu}^{\left[r\right]}\big)\), which means that \(\overline{\mu}_{Y^{\left[r\right]}}^{\left[r\right]}\left(Y^{\left[r\right]}\right)<+\infty\) (recall Proposition~\ref{prop:intensityli}). Finally, in order to apply Proposition~\ref{prop:intmeasiden}, note that from \eqref{eq:Ytildereturn} for \(r=2\), for all \(\left(x,z\right)\in X^{\otimes 2}=\overline{Y}^{\left[2\right]}\) we have
\[Y_{x}^{-1}Y_{z}\subseteq\big(\widetilde{Y}_{z}^{\left(2\right)}\big)^{-1}\lambda\left(x\right),\]
and since \(\widetilde{Y}^{\left(2\right)}\) is locally integrable (\(e_{G}\in\Lambda_{Y}^{3}\subseteq\Lambda_{Y}^{2r-1}\) is an isolated point), \(Y_{x}^{-1}Y_{z}\) is a locally finite set, thus \eqref{eq:doubs} is satisfied. Then from Proposition~\ref{prop:intmeasiden} it follows that
\[I_{\mu}^{r}\left(Y\right)=\mu^{\left[r\right]}\big(Y^{\left[r\right]}\big)=\overline{\mu}_{Y^{\left[r\right]}}^{\left[r\right]}\big(Y^{\left[r\right]}\big)<+\infty.\qedhere\]
\end{proof}

\subsection{Intersection space for nonuniform lattices}\label{sct:nonunif}

Recall that in point processes, the canonical cross section is typically not separated, though it is often locally integrable. The following example, originally presented in~\cite[Example 7.3]{bjorklund2025int}, illustrates a different kind of non-separated transverse \(G\)-space: one that arises as an intersection space. This example further shows that even when a transverse \(G\)-space \(\left(X,\mu,Y\right)\) is infinitely separated, the corresponding intersection space \(\left(Y^{\left[r\right]},\mu^{\left[r\right]},Y^{\otimes r}\right)\), while locally integrable by Theorem~\ref{thm:finiteiv}, may nevertheless fail to be at all separated.

For \(d\in\mathbb{N}\), consider the nonuniform lattice
\[\Gamma\coloneqq\mathrm{SL}_{d}\left(\mathbb{Z}\right)<G\coloneqq\mathrm{SL}_{d}\left(\mathbb{R}\right),\]
and the associated homogeneous space \(\Gamma\backslash G\) with the one-point cross section \(\{\Gamma\}\). Consider the Borel \(G\)-space \(X\coloneqq \left(\Gamma\backslash G\right)^{\otimes 2}\) with the action of \(G\) on the first coordinate, and the intersection space \(Y\coloneqq \{\Gamma\}^{\left[2\right]}\) as a cross section for \(X\). Thus,
\[Y\coloneqq\left\{ \left(\Gamma g,\Gamma g\right):g\in G \right\} \subset X,\]
and a direct computations shows that
\[Y_{x}=g_{1}^{-1}\Gamma g_{2}\times\left\{e_{G}\right\} \text{ for }x=\left(\Gamma g_{1},\Gamma g_{2}\right)\in X,\]
so that \(Y\) is indeed a cross section for the Borel \(G\)-space \(X\). Note that when \(y=\left(\Gamma g,\Gamma g\right)\in Y\) we have \(Y_{y}=\Gamma^{g}\), and this shows that \(Y\) is not separated. Indeed,
\[\Lambda_{Y}=\bigcup\nolimits_{y\in Y}Y_{y}=\bigcup\nolimits_{g\in G}\Gamma ^{g}=\mathrm{SL}_{d}\left(\mathbb{Z}\right)^{\mathrm{SL}_{d}\left(\mathbb{R}\right)},\]
and conjugations of unipotent elements accumulate at the identity, and thus \(e_{G}\in\Lambda_{Y}\) is not isolated.

Nevertheless, \(Y\) is locally integrable with respect to the canonical \(G\times G\)- hence \(G\)-invariant measure \(m_{X}=m_{\Gamma\backslash G}\otimes m_{\Gamma\backslash G}\) on \(X\). This is a result of the general Theorem~\ref{thm:finiteiv}, since \(\{\Gamma\}\) is infinitely separated cross section for \(\Gamma\backslash G\), but we can also see it here directly. Indeed, let \(K\subset G\) be a compact set, and for every \(x=\left(\Gamma g_{1},\Gamma g_{2}\right)\in X\) we have
\[\left|Y_{x}\cap K\right|=\left|g_{1}^{-1}\Gamma g_{2}\cap K\right|=\left|g_{1}^{-1}\Gamma \cap Kg_{2}^{-1}\right|.\]
For an arbitrary fixed \(g_{2}\in G\), define the Borel function
\[f_{K,g_{2}}:G \longrightarrow\left[0,+\infty\right],\quad f_{K,g_{2}}\left(g\right)\coloneqq \mathbf{1}_{Kg_{2}^{-1}}\left(g^{-1}\right).\]
Note that for every \(g\in G\) we have
\[\sum\nolimits_{\gamma\in\Gamma }f_{K,g_{2}}\left(\gamma g_{1}\right)=\sum\nolimits_{\gamma\in\Gamma }\mathbf{1}_{Kg_{2}^{-1}}\left(g_{1}^{-1}\gamma^{-1}\right)=\left|g_{1}^{-1}\Gamma\cap Kg_{2}^{-1}\right|,\]
and it follows that
\[\int_{\Gamma\backslash G}\left|g_{1}^{-1}\Gamma \cap Kg_{2}^{-1}\right|dm_{\Gamma\backslash G}\left(\Gamma g_{1}\right)=\int_{G }f_{K,g_{2}}\left(g\right)dm_{G }\left(g\right)=m_{G }\left(Kg_{2}^{-1}\right)=m_{G}\left(K\right),\]
independently of \(g_{2}\). We obtain by unimodularity and Fubini's theorem that
\begin{align*}
\int_{X}\left|Y_{x}\cap K\right|dm_{X}\left(x\right)
&=\iint\nolimits_{\left(\Gamma\backslash G\right)^{\otimes 2}}\left|g_{1}^{-1}\Gamma\cap Kg_{2}^{-1}\right|dm_{\Gamma\backslash G}^{\otimes 2}\left(\Gamma g_{1},\Gamma g_{2}\right)\\
& =\int_{\Gamma\backslash G}m_{G_{o}}\left(K\right)dm_{\Gamma\backslash G}\left(\Gamma g_{2}\right)=m_{G}\left(K\right)<+\infty.
\end{align*}

\section{The basic inequality and its extremes}

The following inequality, appeared in Theorem~\ref{thm:mthm}, is a simple consequence of Kac's lemma~\ref{prop:Kac}.

\begin{prop}[The Basic Inequality]\label{prop:basicineq}
For every transverse \(G\)-space \(\left(X,\mu,Y\right)\),
\begin{equation}\label{eq:basicineq}
I_{\mu}^{r+1}\left(Y\right)\geq\iota_{\mu}\left(Y\right)\cdot I_{\mu}^{r}\left(Y\right),\quad r\in\mathbb{N}.
\end{equation}
In particular,
\[I_{\mu}^{r+1}\left(Y\right)\geq\iota_{\mu}\left(Y\right)^{r},\quad r\in\mathbb{N}.\]
\end{prop}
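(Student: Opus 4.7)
The plan is to reduce the inequality to a pointwise statement about Voronoi cells and then pick up the factor of $\iota_{\mu}(Y)$ by integrating out one coordinate via Fubini's theorem. Concretely, I would first apply Kac's lemma (Proposition \ref{prop:Kac}) to both sides, writing
\[
I_{\mu}^{r+1}\left(Y\right)=\int_{Y^{\otimes\left(r+1\right)}}m_{G}\bigl(\Theta\left(Y_{y_{1}}\cap\dotsm\cap Y_{y_{r+1}},e_{G}\right)\bigr)\,d\mu_{Y}^{\otimes\left(r+1\right)}\left(y_{1},\dotsc,y_{r+1}\right),
\]
and analogously for $I_{\mu}^{r}\left(Y\right)$ with one fewer variable. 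Note that the Voronoi cells are well-posed, since $y_{i}\in Y$ implies $e_{G}\in Y_{y_{i}}$, so $e_{G}$ lies in the intersection for every choice of $\left(y_{1},\dotsc,y_{r+1}\right)\in Y^{\otimes\left(r+1\right)}$.

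Next, I would peel off the integration over $y_{r+1}$ by Fubini. The central ingredient is the monotonicity of the Voronoi tessellation from Proposition \ref{prop:voronoi}(2): since
\[
Y_{y_{1}}\cap\dotsm\cap Y_{y_{r+1}}\subseteq Y_{y_{1}}\cap\dotsm\cap Y_{y_{r}},
\]
the cell around $e_{G}$ can only grow when we add the extra constraint, giving the pointwise bound
\[
m_{G}\bigl(\Theta\left(Y_{y_{1}}\cap\dotsm\cap Y_{y_{r+1}},e_{G}\right)\bigr)\geq m_{G}\bigl(\Theta\left(Y_{y_{1}}\cap\dotsm\cap Y_{y_{r}},e_{G}\right)\bigr).
\]
The right-hand side is constant in $y_{r+1}$, so integrating against $d\mu_{Y}\left(y_{r+1}\right)$ over $Y$ contributes a factor $\mu_{Y}\left(Y\right)=\iota_{\mu}\left(Y\right)$. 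Integrating the resulting inequality over $\left(y_{1},\dotsc,y_{r}\right)\in Y^{\otimes r}$ against $d\mu_{Y}^{\otimes r}$ and invoking Kac's lemma once more on the remaining factor yields $I_{\mu}^{r+1}\left(Y\right)\geq\iota_{\mu}\left(Y\right)\cdot I_{\mu}^{r}\left(Y\right)$.

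Finally, the stated consequence $I_{\mu}^{r+1}\left(Y\right)\geq\iota_{\mu}\left(Y\right)^{r}$ follows by an immediate induction on $r$, using the base case $I_{\mu}^{1}\left(Y\right)=\mu\left(X\right)=1$ noted just before the proposition. I do not anticipate any real obstacle: the only mild subtlety is that the Haar measures of the cells may be infinite, but this only strengthens the inequality since the integrand is nonnegative and $+\infty$ behaves correctly under the monotonicity comparison.
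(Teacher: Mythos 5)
Your proposal is correct and follows essentially the same route as the paper's proof: apply Kac's lemma, use the monotonicity of Voronoi cells from Proposition~\ref{prop:voronoi}(2) applied to the inclusion \(Y_{y_{1}}\cap\dotsm\cap Y_{y_{r+1}}\subseteq Y_{y_{1}}\cap\dotsm\cap Y_{y_{r}}\), integrate out \(y_{r+1}\) by Fubini to extract the factor \(\iota_{\mu}\left(Y\right)\), and conclude by induction from \(I_{\mu}^{1}\left(Y\right)=1\). No gaps.
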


The case \(r=1\) in Proposition~\ref{prop:basicineq} is the inequality \(I_{\mu}\left(Y\right)\geq\iota_{\mu}\left(Y\right)\) appears in Theorem~\ref{thm:mthm}.

\begin{proof}
Let \(r\in\mathbb{N}\) be arbitrary. For every \(\left(y_{1},\dotsc,y_{r},y_{r+1}\right)\in Y^{\otimes r+1}\) we have
\[Y_{\left(y_{1},\dotsc,y_{r},y_{r+1}\right)}^{\otimes r+1}\subseteq Y_{\left(y_{1},\dotsc,y_{r}\right)}^{\otimes r},\]
so by Proposition~\ref{prop:voronoi}(2), the corresponding Voronoi tessellation satisfies
\[\Theta\big(Y_{\left(y_{1},\dotsc,y_{r},y_{r+1}\right)}^{\otimes r+1},e_{G}\big)\supseteq \Theta\big(Y_{\left(y_{1},\dotsc,y_{r}\right)}^{\otimes r},e_{G}\big).\]
Then \eqref{eq:basicineq} follows using Kac's lemma~\ref{prop:Kac}, and for a later use we record the explicit inequality:
\begin{equation}\label{eq:latus}
\begin{aligned}
I_{\mu}^{r+1}\left(Y\right)
&=\int_{Y^{\otimes r+1}}m_{G}\big(\Theta\big(Y_{\left(y_{1},\dotsc,y_{r},y_{r+1}\right)}^{\otimes r+1},e_{G}\big)\big)d\mu_{Y}^{\otimes r+1}\left(y_{1},\dotsc,y_{r},y_{r+1}\right)\\
&\geq\int_{Y^{\otimes r+1}}m_{G}\big(\Theta\big(Y_{\left(y_{1},\dotsc,y_{r}\right)}^{\otimes r},e_{G}\big)\big)d\mu_{Y}^{\otimes r+1}\left(y_{1},\dotsc,y_{r},y_{r+1}\right)\\
&=\int_{Y}d\mu_{Y}\left(y_{r+1}\right)\cdot \int_{Y^{\otimes r}}m_{G}\big(\Theta\big(Y_{\left(y_{1},\dotsc,y_{r}\right)}^{\otimes r},e_{G}\big)\big)d\mu_{Y}^{\otimes r}\left(y_{1},\dotsc,y_{r}\right)\\
&=\mu_{Y}\left(Y\right)\cdot I_{\mu}^{r}\left(Y\right)=\iota_{\mu}\left(Y\right)\cdot I_{\mu}^{r}\left(Y\right).
\end{aligned}
\end{equation}
Finally, inducting on \(r\) and using that \(I_{\mu}^{1}\left(Y\right)=\mu\left(X\right)=1\), we get \(I_{\mu}^{r+1}\left(Y\right)\geq\iota_{\mu}\left(Y\right)^{r}\) for every \(r\in\mathbb{N}\).
\end{proof}

Let us now consider the totally periodic case, where the intersection covolume is minimal. Let \(\Gamma<G\) be a lattice. There is associated a canonical transverse \(G\)-space,
\[\left(\Gamma\backslash G,m_{\Gamma\backslash G},\{\Gamma\}\right),\]
with the action given by \(g_{o}.\Gamma g=\Gamma gg_{o}^{-1}\) and \(m_{\Gamma\backslash G}\) is the unique \(G\)-invariant probability measure on \(\Gamma\backslash G\). Since \(G\) acts transitively on \(\Gamma\backslash G\), the one-point set \(\{\Gamma\}\subseteq \Gamma\backslash G\) is a cross section, with return times sets
\[\{\Gamma\}_{\Gamma g}=\Gamma g,\quad \Gamma g\in\Gamma\backslash G.\]
Using Kac's lemma~\ref{prop:Kac}, we obtain
\begin{align*}
I_{m_{\Gamma\backslash G}}^{r}\left(\left\{\Gamma\right\}\right)
&=\mu_{\left\{\Gamma\right\}}\left(\left\{\Gamma\right\} \right)^{r}\cdot m_{G}\left(\Theta\left(\Gamma,e_{G}\right)\right)\\
&=\mu_{\left\{\Gamma\right\}}\left(\left\{\Gamma\right\} \right)^{r}\cdot m_{\Gamma\backslash G}\left(\Gamma\backslash G\right)=\mu_{\left\{\Gamma\right\}}\left(\left\{\Gamma\right\} \right)^{r}=\iota_{m_{\Gamma\backslash G}}\left(\{\Gamma\}\right)^{r},\qquad r\in\mathbb{N}.
\end{align*}

Next we characterize the extremes of the Basic Inequality \eqref{eq:basicineq}, generalizing Theorem~\ref{thm:mthm}.

\begin{thm}\label{thm:mthm+}
For a transverse \(G\)-space \(\left(X,\mu,Y\right)\) the following are equivalent:
\begin{enumerate}
    \item \(I_{\mu}^{r+1}\left(Y\right)=\iota_{\mu}\left(Y\right)^{r}\) for every \(r\in\mathbb{N}\).
    \item \(I_{\mu}^{r+1}\left(Y\right)=\iota_{\mu}\left(Y\right)^{r}\) for some \(r\in\mathbb{N}\).
    \item There is a lattice \(\Gamma <G\) as well as a transverse \(G\)-factor \(\left(X,\mu,Y\right)\to\left(\Gamma\backslash G,m_{\Gamma\backslash G},\{\Gamma\}\right)\).
\end{enumerate}
\end{thm}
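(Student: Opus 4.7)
The plan is to prove the cyclic implications $(1) \Rightarrow (2)$ (trivial), $(3) \Rightarrow (1)$ (functoriality plus a direct computation), and $(2) \Rightarrow (3)$ (the main content, which reduces to the case $r = 1$). For $(3) \Rightarrow (1)$, given a transverse $G$-factor onto the homogeneous space $\left(\Gamma\backslash G, m_{\Gamma\backslash G}, \{\Gamma\}\right)$, Propositions~\ref{prop:factor} and~\ref{prop:interfactor} transfer both $\iota_\mu(Y)$ and $I^r_\mu(Y)$ to their values on $(\Gamma\backslash G, m_{\Gamma\backslash G}, \{\Gamma\})$. A direct application of Campbell theorem~\ref{prop:campbell} (with $f(g,y) = \mathbf{1}_K(g)$ and $m_G(K)=1$) yields $\iota_{m_{\Gamma\backslash G}}(\{\Gamma\}) = 1$, and the Kac-type computation displayed just before the theorem statement then gives $I^{r+1}_\mu(Y) = 1 = \iota_\mu(Y)^r$.

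For $(2) \Rightarrow (3)$, iterating the Basic Inequality of Proposition~\ref{prop:basicineq} yields
\[
I^{r+1}(Y) \ge \iota(Y) \cdot I^r(Y) \ge \iota(Y)^2 \cdot I^{r-1}(Y) \ge \cdots \ge \iota(Y)^r,
\]
and the hypothesis $I^{r+1}(Y) = \iota(Y)^r$ forces equality at every intermediate step; in particular $I^2(Y) = \iota(Y)$, which reduces the problem to the case $r = 1$. Examining the integrand-level estimate~\eqref{eq:latus} in the proof of Proposition~\ref{prop:basicineq}, this equality is equivalent to
\[
m_G\bigl(\Theta(Y_y \cap Y_{y'}, e_G)\bigr) = m_G\bigl(\Theta(Y_y, e_G)\bigr) \quad \text{for } \mu_Y^{\otimes 2}\text{-a.e.\ }(y, y') \in Y^{\otimes 2}.
\]
Denote the set of such pairs by $A$. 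Since $\mu_Y^{\otimes 2} \in \mathcal{M}^{E_G^{Y^{\otimes 2}}}(Y^{\otimes 2})$ and the $E_G^{Y^{\otimes 2}}$-saturation of a null set is null (as a countable union over a countable pseudo-group), one may replace $A$ by an $E_G^{Y^{\otimes 2}}$-invariant $\mu_Y^{\otimes 2}$-conull Borel subset.

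The crux is to promote this measure equality at the single center $e_G$ to equality at \emph{every} center $g \in Y_y \cap Y_{y'}$, as required by the rigidity clause of Proposition~\ref{prop:voronoi}(2). For $(y, y') \in A$ and $g \in Y_y \cap Y_{y'}$, the diagonal translate $(g.y, g.y')$ lies in $Y^{\otimes 2}$ and is $E_G^{Y^{\otimes 2}}$-equivalent to $(y, y')$, hence also in $A$. By the equivariance of the Voronoi tessellation (Proposition~\ref{prop:voronoi}(3)), the defining equality at $e_G$ for $(g.y, g.y')$ pulls back to $m_G(\Theta(Y_y \cap Y_{y'}, g)) = m_G(\Theta(Y_y, g))$. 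Since this holds for every $g \in Y_y \cap Y_{y'}$, Proposition~\ref{prop:voronoi}(2) forces $Y_y \cap Y_{y'} = Y_y$, i.e., $Y_y \subseteq Y_{y'}$; symmetry yields $Y_y = Y_{y'}$ for $\mu_Y^{\otimes 2}$-a.e.\ pair.

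Consequently, $y \mapsto Y_y$ is $\mu_Y$-a.e.\ constant, equal to some locally finite set $\Lambda \ni e_G$. Applying the $E_G^Y$-invariance of $\mu_Y$ to the countably many shifts $y \mapsto g.y$ with $g \in \Lambda$, together with the identity $Y_{g.y} = Y_y g^{-1}$, gives $\Lambda g^{-1} = \Lambda$ for all $g \in \Lambda$, so $\Lambda$ is a discrete subgroup of $G$. Picking via Lusin--Novikov~\ref{thm:lusinnovi} a Borel selector $\lambda: X \to G$ with $\lambda(x) \in Y_x$, the map $\phi: X \to \Lambda \backslash G$, $\phi(x) \coloneqq \Lambda\lambda(x)$, is well-defined, Borel, $G$-equivariant, and sends $Y$ onto $\{\Lambda\}$. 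Its pushforward $\phi_*\mu$ is a $G$-invariant probability measure on the transitive $G$-space $\Lambda\backslash G$, which forces $\Lambda$ to be a \emph{lattice} and $\phi_*\mu = m_{\Lambda\backslash G}$. A direct check shows $\{\Lambda\}_{\phi(x)} = \Lambda\lambda(x) = Y_x$ for $\mu$-a.e.\ $x$, so $\phi$ is a transverse $G$-factor onto $(\Lambda\backslash G, m_{\Lambda\backslash G}, \{\Lambda\})$. The main obstacle is the transport step in the crux paragraph, where the $E_G^{Y^{\otimes 2}}$-invariance of $\mu_Y^{\otimes 2}$ must be combined with Voronoi equivariance to convert a single-point measure equality into genuine set equality of return-time sets.
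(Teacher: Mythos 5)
Your proposal is correct and follows essentially the same route as the paper's proof: iterate the Basic Inequality to reduce to \(I_{\mu}^{2}\left(Y\right)=\iota_{\mu}\left(Y\right)\), pass from the integrated Kac identity to an a.e.\ pointwise equality of cell measures, propagate that equality to all centers \(g\in Y_{y}\cap Y_{y^{\prime}}\) via Voronoi equivariance, invoke the rigidity clause of Proposition~\ref{prop:voronoi}(2) to get \(Y_{y}=Y_{y^{\prime}}\) a.e., and then build the factor onto \(\Gamma\backslash G\). The only (cosmetic) deviation is that you obtain the invariant conull set by saturating the complement under the countable equivalence relation, where the paper instead defines the set \(E_{oo}\) explicitly and appeals to Proposition~\ref{prop:null}(2); both arguments share the same implicit use of finiteness of \(m_{G}\left(\Theta\left(Y_{y},g\right)\right)\) when upgrading equality of measures of nested cells to equality modulo \(m_{G}\).
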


\begin{rem}
When \(\left(X,\mu,Y\right)\) is ergodic with a \(G\)-factor to \(\left(\Gamma\backslash G,\pi_{\ast}\mu\right)\), by~\cite[Theorem 2.5]{zimmer1978induced} it is an \emph{induced action} from \(\Gamma\): there is a probability preserving \(\Gamma\)-space \(\left(Z,\zeta\right)\) and
\[\left(X,\mu,Y\right)\cong\left(\Gamma\backslash G\times Z,m_{\Gamma\backslash G}\otimes\zeta,\{\Gamma\}\times Z\right),\]
with the action given by \(g_{o}.\left(\Gamma g,z\right)=\left(\Gamma gg_{o}^{-1},\lambda\left(g_{o},\Gamma g\right).z\right)\) for a Borel section \(\lambda:G\times\Gamma\backslash G\to\Gamma\)~\cite[II]{zimmer1978induced}.
\end{rem}

\begin{proof}
It is clear that (1) implies (2). To see that (3) implies (1) recall that the intersection covolume of \(\left(\Gamma\backslash G,m_{\Gamma\backslash G},\{\Gamma\}\right)\) is \(I_{m_{\Gamma\backslash G}}^{r}\left(\left\{\Gamma\right\} \right)=m_{\Gamma\backslash G}\left(\Gamma\backslash G\right)^{r}\), so by Proposition~\ref{prop:interfactor} the same is true for (3). We then show that (2) implies (3). Suppose (2) holds for \(r\), and using the Basic Inequality \eqref{eq:basicineq} inductively,
\[I_{\mu}^{r+1}\left(Y\right)\geq\iota_{\mu}\left(Y\right)^{r-1}\cdot I_{\mu}^{2}\left(Y\right)\geq\iota_{\mu}\left(Y\right)^{r}=I_{\mu}^{r+1}\left(Y\right),\]
thus those inequalities are in fact equalities, and in particular
\[I_{\mu}^{2}\left(Y\right)=\iota_{\mu}\left(Y\right).\]
By the computation \eqref{eq:latus} with \(r=1\) and using \(I_{\mu}^{1}\left(Y\right)=1\), this yields
\[\int_{Y^{\otimes2}}m_{G}\big(\Theta\big(Y_{\left(y_{1},y_{2}\right)}^{\otimes2},e_{G}\big)\big)d\mu_{Y}^{\otimes2}\left(y_{1},y_{2}\right)=\int_{Y^{\otimes2}}m_{G}\left(\Theta\left(Y_{y_{1}},e_{G}\right)\right)d\mu_{Y}^{\otimes2}\left(y_{1},y_{2}\right),\]
while for all \(\left(y_{1},y_{2}\right)\in Y^{\otimes 2}\) we simply have
\[m_{G}\big(\Theta\big(Y_{\left(y_{1},y_{2}\right)}^{\otimes2},e_{G}\big)\big)=m_{G}\left(\Theta\left(Y_{y_{1}}\cap Y_{y_{2}},e_{G}\right)\right)\geq m_{G}\left(\Theta\left(Y_{y_{1}},e_{G}\right)\right).\]
Therefore, the Borel set
\[E_{o}\coloneqq \left\{\left(y_{1},y_{2}\right)\in Y^{\otimes 2}: m_{G}\left(\Theta\left(Y_{y_{1}}\cap Y_{y_{2}},e\right)\right)=m_{G}\left(\Theta\left(Y_{y_{1}},e\right)\right)\right\},\]
is \(\mu_{Y}^{\otimes 2}\)-conull. Put the Borel set
\[E_{oo}\coloneqq \left\{\left(y_{1},y_{2}\right)\in Y^{\otimes 2}:\forall g\in Y_{y_{1}}\cap Y_{y_{2}},\,\,g.\left(y_{1},y_{2}\right)\in E_{o}\right\}\subseteq E_{o},\]
and by Proposition~\ref{prop:null}(2), \(E_{oo}\) is still \(\mu_{Y}^{\otimes 2}\)-conull. For all \(y\in Y\) and \(g\in Y_{y}\), by Proposition~\ref{prop:voronoi}(3) and the invariance of \(m_{G}\), we have
\[m_{G}\left(\Theta\left(Y_{g.y},e\right)\right)=m_{G}\small(\Theta\small(Y_{y}g^{-1},e\small)\small)=m_{G}\small(\Theta\left(Y_{y},g\right)g^{-1}\small)=m_{G}\left(\Theta\left(Y_{y},g\right)\right),\]
and similarly, for all \(\left(y_{1},y_{2}\right)\in Y\times Y\) and \(g\in Y_{y_{1}}\cap Y_{y_{2}}\), we have
\[m_{G}\left(\Theta\left(Y_{g.y_{1}}\cap Y_{g.y_{2}},e\right)\right)=m_{G}\left(\Theta\left(Y_{y_{1}}\cap Y_{y_{2}},g\right)\right).\]
This means that for all \(\left(y_{1},y_{2}\right)\in E_{oo}\), the two Voronoi tessellations
\[\left\{\Theta\left(Y_{y_{1}}\cap Y_{y_{2}},g\right): g\in Y_{y_{1}}\cap Y_{y_{2}}\right\} \text{ and }\left\{\Theta\left(Y_{y_{1}},g\right): g\in Y_{y_{1}}\right\},\]
satisfy that \(\Theta\left(Y_{y_{1}}\cap Y_{y_{2}},g\right)=\Theta\left(Y_{y_{1}},g\right)\) modulo \(m_{G}\) for all \(g\in Y_{y_{1}}\cap Y_{y_{2}}\), and by Proposition~\ref{prop:voronoi}(2) we deduce that \(Y_{y_{1}}\cap Y_{y_{2}}=Y_{y_{1}}\). Thus, we have found that
\[Y_{y_{1}}\cap Y_{y_{2}}=Y_{y_{2}}\text{ for }\mu_{Y}^{\otimes2}\text{-a.e. }\left(y_{1},y_{2}\right)\in Y^{\otimes2}.\]
Using the same reasoning with the opposite order of \(y_{1},y_{2}\), we deduce that
\[Y_{y_{1}}=Y_{y_{1}}\cap Y_{y_{2}}=Y_{y_{2}}\text{ for }\mu_{Y}^{\otimes 2}\text{-a.e. }\left(y_{1},y_{2}\right)\in Y^{\otimes 2}.\]
It follows from Fubini's theorem that the set
\[B\coloneqq\{y\in Y:Y_{y}=\Gamma\}\text{ is }\mu_{Y}\text{-conull}.\]
Note that \(\Gamma\) satisfies
\[\Gamma g^{-1}=Y_{y}g^{-1}=Y_{g.y}\text{ for all }y\in B\text{ and }g\in Y_{y}=\Gamma.\]
However, by Proposition~\ref{prop:null}(2) we have \(g.y\in B\) for \(\mu_{Y}\)-a.e. \(y\in B\) and every \(g\in Y_{y}\), and hence
\[\Gamma=\Gamma g^{-1}\text{ for all }g\in\Gamma.\]
We have thus found that \(\Gamma<G\) is a subgroup, and it is discrete since \(Y_{y}\) is locally finite for each \(y\in B\).\footnote{The terms \emph{discrete} and \emph{locally finite} are synonymous, and the distinction in terminology is only contextual.} Pick a Borel map \(\lambda:G.Y\to G\) satisfying \(\lambda\left(x\right)\in Y_{x}\) for \(x\in G.Y\) (using Lusin--Novikov theorem~\ref{thm:lusinnovi}), and define
\[\pi:G.Y\to\Gamma\backslash G,\quad\pi\left(x\right)\coloneqq \Gamma\lambda\left(x\right).\]
By Proposition~\ref{prop:null}(2) we have \(\lambda\left(x\right).x\in B\) for \(\mu\)-a.e. \(x\in X\), and therefore
\[\pi\left(x\right)=\Gamma\lambda\left(x\right)=Y_{\lambda\left(x\right).x}\lambda\left(x\right)=Y_{x}\,\,\text{ for }\mu\text{-a.e. }x\in X.\]
Since the \(G\)-invariant set \(A\coloneqq\{x\in X:Y_{x}=B_{x}\}\) is \(\mu\)-conull, it follows that for every \(g\in G\),
\[\pi\left(g.x\right)=Y_{g.x}=Y_{x}g^{-1}=\pi\left(x\right)g^{-1}\,\,\text{ for }\mu\text{-a.e. }x\in X.\]
Therefore, \(\pi\) forms a \(G\)-factor and, since \(\pi_{\ast}\mu\) is \(G\)-invariant, \(\Gamma\) is a lattice in \(G\) and \(\pi_{\ast}\mu=m_{\Gamma\backslash G}\). Finally, to verify that \(\pi:\left(X,\mu,Y\right)\to\left(\Gamma\backslash G,m_{\Gamma\backslash G},\{\Gamma\}\right)\) forms a a transverse \(G\)-factor, note that
\[Y_{x}=Y_{\lambda\left(x\right).x}\lambda\left(x\right)=\Gamma\lambda\left(x\right)=\left\{ \Gamma\right\} _{\Gamma\lambda\left(x\right)}=\left\{ \Gamma\right\} _{\pi\left(x\right)}\text{ for }\mu\text{-a.e. }x\in X.\qedhere\]
\end{proof}

\subsection{No gap in the basic inequality}\label{sct:nogap}

We demonstrate that, at least for \(G=\mathbb{R}\), there is no gap in the basic inequality \eqref{eq:basicineq}, that is, even for transverse \(\mathbb{R}\)-spaces which are not completely periodic, the intersection covolume can be arbitrarily closed to the intensity. We thus find for every \(\epsilon>0\) a transverse \(\mathbb{R}\)-space \(\left(X,\mu,Y\right)\), and in fact \(Y\) can be chosen to be infinitely separated, such that
\begin{equation}\label{eq:suspflow}
\iota_{\mu}\left(Y\right)<I_{\mu}\left(Y\right)<\left(1+\epsilon\right)\cdot\iota_{\mu}\left(Y\right).
\end{equation}

\bigskip

\begin{wrapfigure}[20]{r}{0.29\textwidth}
  \centering
  \vspace{-\baselineskip}
  \begin{tikzpicture}[scale=2, axis/.style={-Latex, thick}, font=\small]
  \fill[black!10] (0,0) rectangle (1-0.1, 1);
  \fill[black!10] (1-0.1, 0) rectangle (1, 2);
  \draw[black, thick] (0,0) -- (1,0) -- (1,2) -- (1-0.1, 2) -- (1-0.1, 1) -- (0,1) -- cycle;
  \draw[axis] (-0.1,0) -- (1.3,0) node[below left=2pt] {\(z\)};
  \draw[axis] (0,-0.1) -- (0,2.4) node[below left=2pt] {\(t\)};
  \draw[black, very thick] (0,1) -- (1-0.1,1) node[font=\footnotesize, midway, above=0pt] {\(r=1\)};
  \draw[black, very thick] (1-0.1,2) -- (1,2) node[font=\footnotesize, midway, above=0pt] {\(r=2\)};
  \node[font=\large, below=4pt] at (0.4, 0) {\(Y\)};
  \node[font=\footnotesize, below left=2pt] at (0,0) {\(0\)};
  \draw (1-0.1, 2pt) -- (1-0.1, -2pt) node[font=\footnotesize, below=5pt, left=-5pt] {\(0.9\)};
  \draw (1, 2pt) -- (1, -2pt) node[font=\footnotesize, below=5pt, right=-5pt] {\(1\)};
  \draw (2pt, 1) -- (-2pt, 1) node[font=\footnotesize, left] {\(1\)};
  \draw (2pt, 2) -- (-2pt, 2) node[font=\footnotesize, left] {\(2\)};
  \node[font=\large] at (0.5, 0.6) {\(X\)};
  \draw[dashed, gray] (1-0.1, 0) -- (1-0.1, 2);
  \draw[dashed, gray] (0, 1) -- (1-0.1, 1);
  \draw[dashed, gray] (0, 2) -- (1-0.1, 2);
  \end{tikzpicture}
    \captionsetup{labelformat=empty, font=scriptsize}
    \caption{The suspension flow on \(Z=\left[0,1\right]\) under the function \(r=2\) on \(Z_{\epsilon}=\left[0.9,1\right]\) (for \(\epsilon=0.1\)) and \(r=1\) elsewhere, and the cross section \(Y=\left[0,1\right]\times\{0\}\).}
\end{wrapfigure}

Let \(\left(Z,\zeta\right)\) be a nonatomic standard probability space and \(0<\epsilon<1\). Pick a Borel set \(Z_{\epsilon}\subset Z\) with \(\zeta\left(Z_{\epsilon}\right)=\epsilon\), and let
\[r:Z\to\mathbb{R},\quad r\left(z\right)\coloneqq 1+\mathbf{1}_{Z_{\epsilon}}\left(z\right).\]
Let \(T\) be any ergodic probability preserving invertible transformation of \(\left(Z,\zeta\right)\), and define \(\left(X,\mu\right)\) to be the (ergodic) suspension flow under the function \(r\) induced from \(T\), namely the vertical flow defined by \(T\) on
\[X=\{\left(z,t\right)\in Z\times\mathbb{R}_{\geq 0}:0\leq t<r\left(z\right)\},\]
with the \(\mathbb{R}\)-invariant probability measure
\[\mu\coloneqq\frac{1}{1+\epsilon}\cdot\left(\zeta\otimes m_{\mathbb{R}}\right)\mid_{X}.\]
(See~\cite[Ch.~11]{cornfeld2012ergodic} for details on the construction of suspension flows). Consider the separated cross section
\[Y\coloneqq Z\times\left\{0\right\}\subset X.\]
We claim that the ergodic transverse \(\mathbb{R}\)-space \(\left(X,\mu,Y\right)\) satisfies \eqref{eq:suspflow}. 

\bigskip

First, note that the return times sets \(Y_{\left(z,t\right)}\), \(\left(z,t\right)\in X\), are not all cosets of the same lattice, and therefore \(\left(X,\mu,Y\right)\) is not completely periodic. Then the first (strict) inequality in \eqref{eq:suspflow} follows from Theorem~\ref{thm:mthm}. To verify the second inequality in \eqref{eq:suspflow}, start by noting that since the cross section \(Y\) is \(\left(-a,a\right)\)-separated for any \(0<a<1/2\), we have
\begin{align*}
\iota_{\mu}\left(Y\right)=\mu_{Y}\left(Y\right)
&=\frac{1}{2a}\cdot m_{\mathbb{R}}\left(\left(-a,a\right)\right)\cdot\mu_{Y}\left(Y\right)=\frac{1}{2a}\cdot\mu\left(\left(-a,a\right).Y\right)\\
&=\frac{1}{2a}\cdot\mu\left(Z\times\left(-a,a\right)\right)=\frac{1}{2a}\cdot\frac{1}{1+\epsilon}\cdot\zeta\left(Z\right)\cdot m_{\mathbb{R}}\left(\left(-a,a\right)\right)=\frac{1}{1+\epsilon}.
\end{align*}
In order to bound \(I_{\mu}\left(Y\right)\), let us look at another cross section for \(\left(X,\mu\right)\),
\[\widetilde{Y}\coloneqq Y\cup\left(Z_{\epsilon}\times\left\{ 1\right\} \right).\]
One can see that  the return times sets \(\widetilde{Y}_{\left(z,t\right)}\), \(\left(z,t\right)\in X\), are all cosets of the lattice \(\mathbb{Z}<\mathbb{R}\), and thus \(\widetilde{Y}\) is completely periodic. Then by Theorem~\ref{thm:mthm} we obtain
\begin{align*}
I_{\mu}\big(\widetilde{Y}\big)=\iota_{\mu}\big(\widetilde{Y}\big)=\mu_{\widetilde{Y}}\big(\widetilde{Y}\big)
&=\frac{1}{2a}\cdot m_{\mathbb{R}}\left(\left(-a,a\right)\right)\cdot\mu_{\widetilde{Y}}\big(\widetilde{Y}\big)\\
&=\frac{1}{2a}\cdot\mu\big(\left(-a,a\right).\widetilde{Y}\big)=1<\frac{1}{1-\epsilon}=\frac{1+\epsilon}{1-\epsilon}\cdot\iota_{\mu}\left(Y\right).
\end{align*}
Finally, since both \(Y\) and \(\widetilde{Y}\) are separated and \(Y\subset\widetilde{Y}\), by Proposition~\ref{prop:mono} we get
\[I_{\mu}\left(Y\right)\leq I_{\mu}\big(\widetilde{Y}\big)<\frac{1+\epsilon}{1-\epsilon}\cdot\iota_{\mu}\left(Y\right).\]
Since \(\epsilon>0\) is arbitrary, this establishes also the second inequality in \eqref{eq:suspflow}.

\bibliographystyle{amsplain}
\bibliography{references}

\end{document}